\documentclass[11pt]{article}
\usepackage{amsmath, amsthm, amsfonts, amssymb, amscd}
\usepackage{graphicx}
\usepackage{hyperref}

\pdfpagewidth 8.5in
\pdfpageheight 11in

\setlength\topmargin{0in}
\setlength\headheight{0in}
\setlength\headsep{0in}
\setlength\textheight{8.6in}
\setlength\textwidth{6.5in}
\setlength\oddsidemargin{0in}
\setlength\evensidemargin{0in}
\setlength\parindent{0in}
\setlength\parskip{0.18in}

\setcounter{secnumdepth}{3}
\setcounter{tocdepth}{3}

\theoremstyle{plain}

\newtheorem{thm}{Theorem}[section]
\newtheorem{prop}[thm]{Proposition}
\newtheorem{cor}[thm]{Corollary}
\newtheorem{lem}[thm]{Lemma}

\newtheorem{defn}{Definition}

\newenvironment{nonumthm}[2][Theorem]{\begin{trivlist}
\item[\hskip \labelsep {\bfseries #1}\hskip \labelsep {\bfseries #2}]}{\end{trivlist}}

\newenvironment{nonumcor}[2][Corollary]{\begin{trivlist}
\item[\hskip \labelsep {\bfseries #1}\hskip \labelsep {\bfseries #2}]}{\end{trivlist}}

\newenvironment{nonumprop}[2][Proposition]{\begin{trivlist}
\item[\hskip \labelsep {\bfseries #1}\hskip \labelsep {\bfseries #2}]}{\end{trivlist}}

\title{The Geometry of Stable Quotients in Genus One}
\author{Yaim Cooper}

\def\C{\mathbb{C}}
\def\Q{\mathbb{Q}}
\def\Z{\mathbb{Z}}
\def\N{\mathbb{N}}
\def\R{\mathbb{R}}
\def\p{\mathbb{P}}
\def\Cstar{\mathbb{C}^*}

\def\OO{\mathcal{O}}

\def\pnmo{\p^{n-1}}

\def\Moo{M_{1,1}}
\def\Moobar{\overline{M}_{1,1}}
\def\Mdepsilon{\overline{M}_{1, d \cdot \epsilon}}
\def\Mdepsilon2{\overline{M}_{1, 0|d}}
\def\Mgmd{\overline{M}_{g,m|d}}

\def\qg1d{\mathrm{\overline{Q}}_{1}(G(1,1),d)}
\def\qgnd{\mathrm{\overline{Q}}_{1}(G(1,n),d)}
\def\qpnd{\mathrm{\overline{Q}}_{1}(\pnmo,d)}

\def\CC{\mathcal{C}}

\def\tilMoo{\widetilde{M}_{1,1}}
\def\tilMood{\widetilde{M}_{1,1}[d]}

\def\ocdp{\OO_\CC(d p)}

\def\RipiQ{R^i \pi_* \Q}
\def\RjpiQ{R^j \pi_* \Q}

\def\Zmod2{\Z/2}
\def\Zmodd{\Z/d}
\def\Ztwo{\Z/2}
\def\Zd{\Z/d}
\def\Zdsq{(\Z/d)^2}

\def\pdmo{\mathbb{P}^{d-1}}

\def\Pd{P_d}
\def\pd{P_d}

\def\tilPd{\widetilde{P_d}}

\def\pgln{PGL_n}
\def\sln{SL_n}

\def\pgl2{PGL_2}

\def\A{\mathbb{A}}

\def\Deld{{\Delta_d}}
\def\Deldi{{\Delta_d^i}}
\def\Deldid{{\Delta_d^i[d_1,...,d_i]}}
\def\Deld1{{\Delta_d^1}}

\def\Ndi{{N_{d-i}^i}}
\def\Nid{{N_i^{d-i}}}

\def\Cvi{C_{v_i}}
\def\Cei{C_{e_i}}

\def\M02Sd{\overline{M}_{0,2|d}/S_d}

\def\Pen{\mathcal{P}}
\def\Penab{\mathcal{P}_{a,b}}
\def\Penabprime{\mathcal{P'}_{a',b'}}

\def\Dfd{D_{fd}}
\def\Dj{D_j}
\def\Db{D_b}

\def\Dfd1{D_{fd}^1}
\def\Dj1{D_j^1}
\def\Db1{D_b^1}

\def\Dfdn{D_{fd}^n}
\def\Djn{D_j^n}
\def\Dbn{D_b^n}

\def\N{\mathcal{N}}
\def\NN{\mathfrak{N}}

\def\CC{\mathcal{C}}

\begin{document}
\date{}
\maketitle

\begin{abstract}
Stable quotient spaces provide an alternative to stable maps for compactifying spaces of maps.  When $n \geq 2$, the space $\overline{Q}_{g}(\p^{n-1},d) = \overline{Q}_{g}(G(1,n),d)$ compactifies the space of degree $d$ maps of smooth genus $g$ curves to $\p^{n-1}$, while $\overline{Q}_{g}(G(1,1),d) \simeq \overline{M}_{1, d \cdot \epsilon}/S_d$ is a quotient of a Hassett weighted pointed space.  In this paper we study the coarse moduli schemes associated to the smooth proper Deligne-Mumford stacks $\overline{Q}_{1}(\p^{n-1},d)$, for all $n \geq 1$.  We show these schemes are projective, rationally connected and have Picard number 2.  Then we give generators for the Picard group, compute the canonical divisor, and the cones of ample and effective divisors.  We conclude that $\overline{Q}_{1}(\p^{n-1},d)$ is Fano if and only if $n(d-1)(d+2) < 20$.  In the case $n=1$, we write in addition a closed formula for the Poincar\'{e} polynomial.
\end{abstract}

\section{Introduction} \label{sec:intro}

One of the foundational steps in the mathematical development of Gromov-Witten theory was Kontsevich's construction of moduli spaces of stable maps $\overline{M}_{g,m}(X, \beta)$ compactifying the space of maps of genus $g$ curves with $m$ marked points to a target $X$ with image in a fixed homology class $\beta$.  For the purposes of Gromov-Witten theory, the essential point is that although the moduli spaces are in general neither smooth nor of the expected dimension, they do carry a virtual class.

However, the geometry of the moduli spaces themselves is generally ill behaved.  The simplest target to consider is $\p^r$.  In this case, $\overline{M}_{0,m}(\p^r,d)$ is smooth (as a Deligne-Mumford stack), while for all other genera $g \geq 1$, the spaces $\overline{M}_{g,m}(\p^r,d)$ are generally singular and have multiple components of different dimensions.  This behavior begins almost immediately - even $\overline{M}_{1,0}(\p^2,3)$ compactifying the 9-dimensional classical moduli space of smooth plane cubics has 3 components, two of dimension 9 and one of dimension 10.  So for $g \geq 1$, it has been difficult to study the geometry of the moduli spaces $\overline{M}_{g,m}(\p^r,d)$.  In genus 0, Pandharipande gave generators for the Picard group, computed the canonical divisor, as well as intersections of divisors in $\overline{M}_{0,m}(\p^r,d)$ in \cite{Pand1} and \cite{Pand2}.

In 2009, Marian-Oprea-Pandharipande defined stable quotient spaces, giving a new compactification of the space of maps of genus $g$ curves to Grassmanians.  These stable quotient spaces tend to be more efficient compactifications than stable maps. As with stable maps, for all genera the moduli spaces of stable quotients carry a virtual class.  When the target is projective space, there is a map $c : \overline{M}_{g,m}(\pnmo,d) \twoheadrightarrow \overline{Q}_{g,m}(\pnmo,d)$ and the strongest possible comparison holds, that is $c_* [\overline{M}_{g,m}(\pnmo,d)]^{vir} = [\overline{Q}_{g,m}(\pnmo,d)]^{vir}$ \cite{MOP}.  For targets other than $\pnmo$, such a simple relationship is not expected.

One approach to probing the relationship between stable maps and stable quotients was proposed by Toda in \cite{Toda}.  There, he introduces a sequence of moduli spaces $\overline{Q}_{g,m}(G(r,n),d)^{\epsilon}$ $0 \leq \epsilon \in \R$ interpolating between $\overline{M}_{g,m}(G(r,n),d)$ and $\overline{Q}_{g,m}(G(r,n),d) = \overline{Q}_{g,m}(G(r,n),d)^0$.  For fixed $g,m,r,n,d$, there are finitely many spaces $\overline{Q}_{g,m}(G(r,n),d)^{\epsilon}$ in this sequence, and the change of moduli spaces as the stability parameter $\epsilon$ varies is an example of wall crossing.  When $r = 1$, whenever $\epsilon > \epsilon '$, there is a morphism $c^{\epsilon, \epsilon'}: \overline{Q}_{g,m}(G(r,n),d)^{\epsilon} \twoheadrightarrow \overline{Q}_{g,m}(G(r,n),d)^{\epsilon'}$.  The composition of all these maps is equal to the map $c$ above, and we have a finite sequence of spaces

$$\overline{M}_{g,m}(\pnmo,d) \twoheadrightarrow \overline{Q}_{g,m}(\pnmo,d)^{\epsilon_1} \twoheadrightarrow ... \twoheadrightarrow \overline{Q}_{g,m}(\pnmo,d).$$

Using these spaces it is possible to define a sequence of Gromov-Witten like invariants for a target variety $X \subset \pnmo$.  It is hoped that wall crossing formulae can be derived to understand how these invariants change as $\epsilon$ changes.

One feature of stable quotients is that in genus 1 when the target is projective space, the moduli spaces $\qpnd$ compactifying spaces of maps without marked points are smooth, and we can study their geometry directly.  This in turn may help to compute genus 1 stable quotient invariants (albeit invariants that don't require markings), and as discussed above, in turn may help to compute genus 1 Gromov-Witten invariants.  Previously, there have been other constructions of smooth compactifications of genus 1 maps.  In \cite{VakilZinger}, Vakil and Zinger construct a smooth compactification $\widetilde{M}_{1,m}(\pnmo,d)$ by sequentially blowing up loci in the Kontsevich stable maps moduli space.  Zinger subsequently used these spaces to compute the genus 1 Gromov-Witten invariants for the quintic threefold, validating physicists' predictions.  The relation between the Vakil-Zinger compactification and stable quotients can be thought of in the following way.  We have maps

$$\widetilde{M}_{1,m}(\pnmo,d) \twoheadrightarrow \overline{M}^0_{1,m}(\pnmo,d) \twoheadrightarrow \overline{Q}_{1,m}(\pnmo,d) $$

Where $\overline{M}^0_{1,m}(\pnmo,d)$ denotes the main component of the moduli space of stable maps.  The Vakil-Zinger space is in this sense larger than stable maps, while stable quotients are smaller, and in the case $m=0$ here, smooth.  The Vakil-Zinger spaces have the advantage of being smooth even with marked points.

Another feature of stable quotients is that they are easier to work with by localization than stable maps, for the simple reason that they have fewer $\Cstar$ fixed loci relative to the stable maps moduli spaces.  This makes localization computations more feasible on these spaces, and this was used by Pandharipande-Pixton in \cite{PandPix} to compute relations in the tautological ring of $M_g$, in particular proving that the conjectured Faber-Zagier relations in fact hold in $R^*(M_g)$.

Another similar application is the use by Pandharipande of stable quotient spaces to obtain relations in the $\kappa$ ring of the moduli of curves of compact type, $\kappa^*(M_{g,n}^{c})$ \cite{Pandkappa1}, \cite{Pandkappa2}.  Doing so, he is able to give generators and relations for these rings, as well as formulas for the Betti numbers.

Our goal in this paper is to understand the basic features of the geometry of the genus 1 spaces $\qpnd$.  $\qpnd$ is a smooth Deligne-Mumford stack of dimension $nd$ \cite{MOP}.  Knowing this, natural questions to address are, what topological information can be computed?  The cohomology ring?  Betti numbers?  Geometrically, we can ask for all the analogues to the known results in genus 0 about $\overline{M}_{0,n}(\pnmo, d)$.  What is the Picard group and canonical divisor?  (Computed in genus 0 by Pandharipande in \cite{Pand1} and \cite{Pand2}.)  What are the cones of ample and effective divisors?  (Computed in genus 0 by Coskun-Harris-Starr in \cite{CHS1} and \cite{CHS2}.  Is it possible to run Mori's program on this moduli space?  (Addressed in genus 0 by Chen-Chrissman-Coskun in \cite{CCC}.)

We address most of these questions for the case of the moduli spaces $\qpnd$.  The main tools we use are the virtual Poincare polynomial, a Bia{\l}ynicki-Birula stratification when $n \geq 2$, and standard intersection theory.

The main results contained in this paper are the following:

First the Poincar\'{e} polynomial of $\qg1d$ is computed, and the expected symmetry of Poincar\'{e} duality is seen directly in the proof.

\begin{nonumthm} {\ref{PoincareOfG11Formula}}

\text{If $d$ is odd,}

\begin{equation*}
P_{\qg1d} (t) =
\frac{1}{2d} \left[ d (1+t^2)(1+t^4)^{\frac{d-1}{2}} + \sum_{k|d}
\phi(\frac{d}{k}) (1 + t^{\frac{2d}{k}})^{k} \right] + \sum_{j=1}^{d-1} t^{2j}
\end{equation*}

and if $d$ is even,
\begin{equation*}
P_{\qg1d} (t) = \frac{1}{2d} \left[ \frac{d}{2} (1+t^2)^2(1+t^4)^{\frac{d}{2}-2} +
\frac{d}{2} (1+t^4)^{\frac{d}{2}} + \sum_{k|d} \phi(\frac{d}{k}) (1 + t^{\frac{2d}{k}})^k \right] + \sum_{j=1}^{d-1} t^{2j}.
\end{equation*}

\end{nonumthm}

Using the Bia{\l}ynicki-Birula stratification for the moduli space, we are able to in certain cases compute the Poincare polynomial of $\qpnd$, and generally, to show that:

\begin{nonumprop} {\ref{oddratcoh}}
For all $n, d$, and all $i$ odd, $h^i(\qpnd, \Q)=0$.  That is, the odd rational cohomology of $\qpnd$ vanishes.
\end{nonumprop}

and that

\begin{nonumthm}{\ref{PicardRank}}
The Picard rank of $\qpnd$ is equal to $h^2(\qpnd,\Q) = 2$.
\end{nonumthm}

The Picard rank being 2 greatly simplifies divisor computations on these moduli spaces, and we are able to describe much of the divisor theory of the moduli spaces $\qpnd$.  The divisors $D_j, D_{fd},$ and $D_b$ are defined in Section \ref{sec:ConstructionOfDivisors}.

\begin{nonumthm}{\ref{NefCone}}
The nef cone of $\qpnd$ is bounded by the divisors $D_j$ and $D_{fd}$.
\end{nonumthm}

\begin{nonumcor}{\ref{Projective}}
$\qpnd$ is a projective scheme.
\end{nonumcor}

\begin{nonumthm}{\ref{EffCone}}
The effective cone of $\qpnd$ is bounded by the divisors $D_j$ and $D_b$.
\end{nonumthm}

\begin{nonumthm}{\ref{Canonical}}
$K_{\qpnd}  =  \frac{\left( d-11 + (d-1)(n-1) \right)}{12} D_j - n D_b.$
\end{nonumthm}

Finally, by direct analysis, we prove that

\begin{nonumthm} {\ref{thm:ratconn}}
$\qpnd$ is rationally connected.
\end{nonumthm}

These results leave the following natural question unresolved - are Toda's $\epsilon$-stable quotients related to running the minimal model program on the Kontseivich spaces $\overline{M}_{1,0}(\pnmo,d)$?  More precisely, if one were to run the minimal model program on the main component of $\overline{M}_{1,0}(\pnmo,d)$, could the sequence of main components of Toda's spaces, culminating in $\qpnd$, arise as one of the possible outcomes?  The main component of $\overline{M}_{1,0}(\pnmo,d)$ is normal \cite{HuLi}, \cite{Zinger}, so it is possible to consider running the minimal model program on it.  Being rationally connected by Theorem \ref{thm:ratconn}, the last space in such a sequence should be a Fano fiber space.  So the first step in this line of questioning would be to check if $\qpnd$ is minimal in the sense that it admits a map to a smaller dimensional space $Y$ such that all fibers are Fano.

At the moment, what we can say is that there is a projection $\pi: \qpnd \rightarrow \Moobar$, and all but one fiber are known to be Fano.  Specifically, the fiber over any smooth genus 1 curve is isomorphic to $\p^{nd-1}/G$ for a finite group $G$, hence Fano.  The remaining fiber, over the nodal $\p^1$, is not normal, and it is not known if the normalization of that fiber is Fano or not.

The author would like to thank the following people.  My advisor R. Pandharipande for patiently teaching me the techniques used in this paper.  I. Coskun for the course "The birational geometry of the moduli spaces of curves" he gave at School on birational geometry and moduli spaces June 1-11, 2010 at the University of Utah during which I was inspired to consider the questions about the cones of nef and effective divisors.  Also, O. Biesel, D. Chen, A. Deopurkar, M. Fedorchuck, C. Fontanari, J. Li, A. Patel, D. Ross, V. Shende, D. Smyth, R. Vakil, M. Viscardi, and A. Zinger for helpful conversations.  The author was supported by an NSF graduate fellowship.

\subsubsection{Definition of Stable Quotients}

In this paper we work over $\C$, and by \textit{curve} will mean a reduced connected scheme of pure dimension 1.  We begin by recalling the definition of stable quotients, as introduced in \cite{MOP}.  In \cite{MOP} a more general definition allowing marked points is given, but as we will be concerned only with the unmarked case, we state only that definition here.

Let $C$ be a curve with at worst nodal singularities, of arithmetic genus $g$. A quotient $q$ of the trivial sheaf $$0 \rightarrow S \rightarrow \C^n \otimes \OO_C \stackrel{q}{\rightarrow} Q \rightarrow 0$$ is a \textit{quasi-stable quotient} if $Q$ is locally free at the nodes of $C$.  Quasi-stability implies that

(i) the torsion subsheaf $\tau(Q) \subset Q$ has support contained in $C^{ns}$, the nonsingular locus of $C$, and

(ii) $S$ is a locally free sheaf on $C$.

Let $k$ denote the rank of $S$.  Given $C$ and a quasi-stable quotient $Q$ on $C$, the data $(C,q)$ determines a \textit{stable quotient} if the $\Q$-line bundle
\begin{equation}
\label{stability}
\omega_C \otimes (\wedge^k S^*)^{\otimes \epsilon} \mbox{ is ample for all } 0<\epsilon \in \Q.
\end{equation}

No amount of positivity of $\bigwedge^k S^*$ can stabilize a genus 0 component unless it contains at least 2 nodes, and if such a component has exactly 2 nodes, then $\bigwedge^k S^*$ must have positive degree on it.

Two quasi-stable quotients $$0 \rightarrow S \rightarrow \C^n \otimes \OO_C \stackrel{q}{\rightarrow} Q \rightarrow 0 \hspace{11mm} \mbox{ and } \hspace{11mm} 0 \rightarrow S' \rightarrow \C^n \otimes \OO_C \stackrel{q'}{\rightarrow} Q' \rightarrow 0$$ are \textit{strongly isomorphic} if $S, S' \subset \C^n \otimes \OO_c$ are equal.  An \textit{isomorphism} of quasi-stable quotients $$\phi : (C,q) \rightarrow (C',q')$$ is an isomorphism of curves $\phi : C \stackrel{\sim}{\rightarrow} C'$ such that the quotients $q$ and $\phi^*(q')$ are strongly isomorphic.

In \cite{MOP}, Marian-Oprea-Pandharipande prove the following:

\begin{thm} [\cite{MOP} Theorem 1]
The moduli space of stable quotients $\overline{Q}_{g}(G(k,n),d)$ parameterizing the data $$(C, 0 \rightarrow S \rightarrow \C^n \otimes \OO_C \stackrel{q}{\rightarrow} Q \rightarrow 0)$$ with $rank(S) = k$ and $deg(S) = -d$ is a separated and proper Deligne-Mumford stack of finite type over $\C$.
\end{thm}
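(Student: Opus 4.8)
The plan is to realize $\overline{Q}_g(G(k,n),d)$ as a moduli stack fibered over a parameter space for nodal curves via a relative Quot scheme, deduce algebraicity and finite type from Grothendieck's theorems, and then verify the Deligne--Mumford condition and the valuative criteria for separatedness and properness directly. The key input is boundedness, and here the stability condition (\ref{stability}) does the work: since $\omega_C \otimes (\wedge^k S^*)^{\otimes\epsilon}$ must be ample for \emph{all} $\epsilon>0$ (including small ones), on each component $C_i$ we need $\deg_{C_i}\omega_C = 2g_i - 2 + \#(\text{branches at nodes on }C_i)\ge 0$, so $C$ has no rational tails and no isolated rational components, and any rational component with exactly two branches at nodes must have $\deg_{C_i}(\wedge^k S^*)>0$. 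Because $S$ is locally free and sits inside the trivial bundle, $\deg(\wedge^k S^*|_{C_i}) = -\deg(S|_{C_i})\ge 0$ on every component while $\sum_i \deg(\wedge^k S^*|_{C_i}) = d$; hence there are at most $d$ such ``destabilizing'' two-node rational bridges, the number of components is bounded in terms of $g$ and $d$, and the curves $C$ range over a bounded family. Over a finite-type parameter space $B$ carrying a flat family of these curves $\mathcal{C}\to B$ with a relatively very ample sheaf, I would form the relative Quot scheme of quotients $\C^n\otimes\OO_{\mathcal{C}}\to Q$ of the prescribed numerical type, pass to the open subscheme where $Q$ is locally free at the nodes of the fibers and (\ref{stability}) holds, and present $\overline{Q}_g(G(k,n),d)$ as the associated quotient stack. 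Finiteness and representability of Quot then give that the stack is algebraic and of finite type over $\C$.

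Next I would check the Deligne--Mumford property, i.e. that automorphism groups are finite and reduced. An automorphism of $(C,q)$ is an automorphism $\phi$ of $C$ with $\phi^*S=S$ as subsheaves of $\C^n\otimes\OO_C$. Components carrying three or more special points are rigid, and by the boundedness discussion there are no rational tails; the only candidates for an infinitesimal automorphism are the $\C^*$ acting on a two-node rational bridge $R\cong\p^1$ fixing its two nodes. But on such an $R$ we have $\deg(\wedge^k S^*|_R)>0$, so $Q|_R$ has torsion of positive length, and by quasi-stability that torsion is supported \emph{away from the two nodes}, hence at a point not fixed by the $\C^*$-action. Therefore $S|_R$ is not $\C^*$-invariant, $\phi$ must be trivial on $R$, and $\mathrm{Aut}(C,q)$ is finite and reduced; equivalently the group in the quotient presentation acts with finite, reduced stabilizers, so the stack is Deligne--Mumford.

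Separatedness and properness I would verify via the valuative criteria over a DVR (the germ of a smooth curve, generic point $\eta$, special point $0$), allowing finite base change for properness. Given a family of stable quotients over $\eta$: for existence of a limit, first apply semistable reduction to fill in a nodal curve $\mathcal{C}$ over the whole germ (after base change), then extend $S_\eta$ by its flat limit, i.e. its closure $\widetilde S_0\subset\C^n\otimes\OO_{C_0}$ in the relative Quot scheme. The sheaf $\widetilde S_0$ may fail to be locally free, or the quotient may acquire torsion at the nodes of $C_0$; I would repair this by the standard elementary modifications---replacing $\widetilde S_0$ by an appropriate saturation to make it a subbundle, and at each node where torsion appears blowing up that node of the total space to insert a rational bridge and pushing the offending torsion onto the new component, so that $Q$ becomes locally free at the node at the cost of exactly the destabilizing bridges permitted by the definition. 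After contracting any components that violate (\ref{stability}) one checks directly that the resulting $(C_0,q_0)$ is a stable quotient, giving existence. For separatedness one argues that this limit is unique: the flat Quot limit is canonical, the saturation and the node-by-node modification are forced by the requirement that $Q$ be locally free of the right degree, the ensuing contraction is uniquely determined by (\ref{stability}), and stable reduction of the curves is unique; so two limits of the same family are isomorphic.

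The technical heart---and the step I expect to be the main obstacle---is the existence half of the valuative criterion of properness: carrying out the sheaf-theoretic surgery on the central fiber (flat Quot limit, saturation of $S$, clearing torsion at the nodes by inserting rational bridges, then contracting) so that the output provably satisfies (\ref{stability}) and the degree bookkeeping matches, i.e. the inserted bridges carry precisely the right amount of $\wedge^k S^*$-degree. The companion difficulty is showing this construction is canonical enough to force uniqueness of the limit, which is exactly what yields separatedness; by contrast, algebraicity, finite type, and the Deligne--Mumford property are comparatively formal once boundedness is in hand.
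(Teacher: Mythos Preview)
The paper does not prove this theorem at all: it is quoted verbatim from \cite{MOP} as background (see the sentence ``In \cite{MOP}, Marian--Oprea--Pandharipande prove the following:'' immediately preceding the statement), and no argument is supplied or even sketched. So there is no ``paper's own proof'' to compare your proposal against.

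For what it is worth, your outline is a reasonable and broadly standard approach to results of this type, and it is close in spirit to what \cite{MOP} actually do: they bound the curve type via the stability condition, realize the moduli space using relative Quot schemes over a moduli of curves (in their case, over the Hassett spaces $\overline{M}_{g,m|d}$, by recording the torsion of $Q$ as the $\epsilon$-weighted points), and obtain properness and separatedness from the properness of Quot and of the Hassett base together with the valuative criterion. Your sketch differs mainly in that you propose to handle properness by hand via semistable reduction and elementary modifications, whereas \cite{MOP} lean more heavily on the structural map to $\overline{M}_{g,m|d}$; either route is viable, and yours is more self-contained at the cost of the surgery bookkeeping you correctly flag as the delicate step. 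But again, none of this appears in the present paper, which simply cites the result.
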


\begin{thm}[\cite{MOP} Proposition 1]
$\qgnd = \qpnd$ is a smooth irreducible Deligne-Mumford stack of dimension $nd$ for $d>0$.
\end{thm}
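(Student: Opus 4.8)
Since $\qpnd$ is already known to be a proper Deligne--Mumford stack by \cite{MOP}, what remains is smoothness, pure dimension $nd$, and irreducibility. The plan is to study $\qpnd$ via the representable forgetful morphism $\nu\colon\qpnd\to\mathfrak{M}$ to the Artin stack $\mathfrak{M}$ of connected prestable curves of arithmetic genus $1$ with no marked points, which is smooth of dimension $3\cdot 1-3=0$. Over $\mathfrak{M}$ the space $\qpnd$ represents a relative Quot functor, so it inherits the Quot deformation theory: at a point $(C,q)$ with $0\to S\to\C^n\otimes\OO_C\to Q\to0$ the $\nu$-relative tangent space is $\mathrm{Hom}_C(S,Q)$ and the relative obstruction space is $\mathrm{Ext}^1_C(S,Q)$, and since $S$ is locally free these equal $H^0(C,S^\vee\otimes Q)$ and $H^1(C,S^\vee\otimes Q)$. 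Because $\mathfrak{M}$ is smooth and $\nu$ is representable, $\qpnd$ is smooth at $(C,q)$ precisely when $H^1(C,S^\vee\otimes Q)=0$, and then $\dim_{(C,q)}\qpnd=\dim\mathfrak{M}+\chi(C,S^\vee\otimes Q)=\chi(C,S^\vee\otimes Q)$.

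Tensoring the defining sequence by the line bundle $S^\vee$ gives $0\to\OO_C\to(S^\vee)^{\oplus n}\to Q\otimes S^\vee\to0$; since $H^2(C,\OO_C)=0$, the group $H^1(C,Q\otimes S^\vee)$ is a quotient of $H^1(C,S^\vee)^{\oplus n}$, so it suffices to prove $H^1(C,S^\vee)=0$, equivalently by Serre duality on the Gorenstein curve $C$ that $H^0(C,S\otimes\omega_C)=0$. This is where the genus-$1$ stability condition \eqref{stability} does the work. Ampleness of $\omega_C\otimes(S^\vee)^{\otimes\epsilon}$ forces positive degree on every component $C_i$ of $C$; letting $\epsilon\to0^+$ gives $\deg_{C_i}\omega_C\ge0$ for all $i$. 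But $\sum_i\deg_{C_i}\omega_C=\deg\omega_C=2p_a(C)-2=0$, so in fact $\deg_{C_i}\omega_C=0$ on every component; feeding this back into the stability inequality then forces $\deg_{C_i}S^\vee>0$, i.e. $\deg_{C_i}S<0$, on every component. Hence $S\otimes\omega_C$ has strictly negative degree on each component of $C$, and such a line bundle has no nonzero global section (restrict to any component and pass to its normalization). Therefore $H^0(C,S\otimes\omega_C)=0$, so $H^1(C,S^\vee\otimes Q)=0$ and $\qpnd$ is smooth. For the dimension, the same tensored sequence gives $\deg(Q\otimes S^\vee)=n\deg S^\vee=nd$ with rank $n-1$, so Riemann--Roch on the arithmetic-genus-$1$ curve $C$ yields $\chi(C,S^\vee\otimes Q)=nd$; thus $\qpnd$ has pure dimension $nd$. (For $n=1$, $Q\otimes S^\vee$ is a torsion sheaf, so $H^1$ vanishes automatically and the dimension is $d$.)

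For irreducibility, let $U\subset\qpnd$ be the open substack where $C$ is smooth. On $U$ one again has $H^1(C,S^\vee)=H^0(C,S)^\vee=0$, now simply because $\deg S=-d<0$ on the smooth genus-$1$ curve $C$, so $U$ is smooth; moreover $U$ maps to the irreducible moduli of smooth genus-$1$ curves with fibers the Quot schemes of $\C^n\otimes\OO_C$ of the relevant numerical type, which are connected, so $U$ is connected and hence irreducible. I then claim $\overline U=\qpnd$: for any stable quotient $(C_0,q_0)$, the vanishing $\mathrm{Ext}^1_{C_0}(S_0,Q_0)=0$ implies that the map from first-order deformations of $(C_0,q_0)$ to first-order deformations of $C_0$ is surjective, so there is a first-order deformation of $(C_0,q_0)$ inducing a simultaneous smoothing of all the nodes of $C_0$; by the smoothness already proved this integrates to a one-parameter family whose general member has smooth underlying curve, hence lies in $U$. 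Thus $(C_0,q_0)\in\overline U$, and $\qpnd=\overline U$ is irreducible.

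The main obstacle I anticipate is the irreducibility, not the smoothness: once one notices that the total degree $0$ of $\omega_C$ pins down $\deg_{C_i}\omega_C$ and then $\deg_{C_i}S$ on every component, the cohomology vanishing is immediate, but for irreducibility one must establish (or cite) connectedness of the Quot schemes of $\C^n\otimes\OO_C$ on a fixed smooth genus-$1$ curve --- equivalently irreducibility of the space of degree-$d$ framed maps $C\to\pnmo$ together with its torsion-quotient degenerations --- and one must make the deformation-integration argument for the density of $U$ precise, checking in particular that the chosen first-order deformation genuinely yields a family smoothing every node of $C_0$.
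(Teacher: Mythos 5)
The paper does not actually prove this statement; it is quoted verbatim from \cite{MOP} (their Proposition~1) as background, so there is no in-paper proof to compare yours against. Your reconstruction by deformation theory is, however, essentially the standard argument (and I believe it is the one in \cite{MOP}), and it is correct.

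The backbone is sound: the forgetful map $\nu\colon\qpnd\to\mathfrak{M}_1$ to the smooth zero-dimensional Artin stack of prestable genus-$1$ curves is representable by (open substacks of) relative Quot schemes, so the $\nu$-relative tangent and obstruction are $H^0(S^\vee\otimes Q)$ and $H^1(S^\vee\otimes Q)$. Twisting $0\to S\to\OO_C^{\oplus n}\to Q\to0$ by the line bundle $S^\vee$ and using $H^2(C,\OO_C)=0$ reduces the obstruction vanishing to $H^1(S^\vee)=0$, i.e.\ $H^0(S\otimes\omega_C)=0$. Your degree bookkeeping is the right way to get this from stability: since $\omega_C\otimes(S^\vee)^{\otimes\epsilon}$ is ample for all $\epsilon>0$, letting $\epsilon\to0^+$ gives $\deg_{C_i}\omega_C\ge0$, and since these sum to $2p_a(C)-2=0$ each is $0$ (indeed in genus $1$ every component is either the elliptic curve or a $\p^1$ with exactly two special points, so this is forced); hence $\deg_{C_i}S^\vee>0$ on every component and $S\otimes\omega_C$ has strictly negative degree everywhere, killing $H^0$. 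The dimension count $\chi(Q\otimes S^\vee)=n\chi(S^\vee)-\chi(\OO_C)=nd$ is correct, and the $n=1$ case is handled.

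The two points you flag as needing care are real but both close easily. For irreducibility of the fibers of $U\to M_{1,1}$: on a smooth genus-$1$ curve $C$, the Quot scheme of rank-$1$, degree-$(-d)$ subsheaves of $\OO_C^{\oplus n}$ fibers over $\mathrm{Pic}^{-d}(C)\simeq C$ by $(S\hookrightarrow\OO_C^{\oplus n})\mapsto S$, with fiber $\mathrm{Hom}(S,\OO_C^{\oplus n})\setminus\{0\}=H^0(S^\vee)^{\oplus n}\setminus\{0\}$. Since $\deg S^\vee=d>0=\deg\omega_C$, $h^1(S^\vee)=0$ so $h^0(S^\vee)=d$ for \emph{every} $S$; the fiber is $\C^{nd}\setminus\{0\}$ and the total space is irreducible. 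For density of $U$, rather than integrating a chosen first-order smoothing (which works but needs Artin approximation to go from a formal arc to an actual family), it is cleaner to observe that the surjectivity $T_{(C_0,q_0)}\qpnd\twoheadrightarrow\mathrm{Def}(C_0)$ together with the fact that the nodal locus of $\mathfrak{M}_1$ is a normal-crossings divisor shows the boundary $\qpnd\setminus U$ is a divisor; since $\qpnd$ is smooth of pure dimension $nd$, no irreducible component can lie in a codimension-$1$ subset, so every component meets $U$, and $U$ irreducible forces $\qpnd$ irreducible.
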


\subsection{Notation and conventions}\label{sec:notation}

The stability condition (\ref{stability}) constrains the isomorphism type of the underlying curve $C$.  In genus 1, there are only two possibilities.  If $(C,q)$ is a stable quotient, either $C$ is a smooth genus 1 curve, or $C$ is isomorphic to $C_i$, a cycle of $i$ rational curves, and $S^*$ has positive degree on each component of $C_i$.  Thus for a stable quotient in $\qpnd$ the underlying curve $C$ is either a smooth genus 1 curve or is isomorphic to $C_i$ for $1 \leq i \leq d$.  This gives a natural stratification on $\qpnd$.

\begin{figure}[b]
  \centering
  \includegraphics[width=.5in]{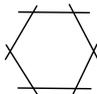}
  \caption{For example, the curve $C_6$.}
\end{figure}

Following \cite{MOP},

\begin{defn}
Let $\Mgmd$ be the moduli space parameterizing genus $g$ curves with markings $\{ p_1,...,p_m\} \bigcup \{q_1, ..., q_d\} \in C^{ns} \subset C$ satisfying

(i) the points $p_i$ are distinct,

(ii) the points $q_j$ are distinct from the points $p_i$

with stability given by the ampleness of $$\omega_C(\sum_{i=1}^m p_i + \epsilon \sum_{j=1}^d q_j)$$ for every strictly positive $\epsilon \in \Q$.
\end{defn}

Note that the points $q_j$ can collide.  $\Mgmd$ is a special case of Hassett's more general construction of weighted pointed spaces of curves, namely where the points $p_i$ each have weight 1 and $q_j$ each have weight $\epsilon$, for $\epsilon < \frac{1}{d}$.

We end with an alternate description of the moduli space $\qgnd$.  Given a line bundle $S$ and an inclusion $0 \rightarrow S \rightarrow \C^n \otimes \OO_C$, the dual sequence is $\C^n \otimes \OO_C \rightarrow S^* \rightarrow 0$ and is equivalent to the data of $n$ sections $s_i \in H^0(S^*)$.  So $\qpnd$ can alternatively be described as parameterizing, up to isomorphism, tuples $(C,(s_0,...,s_{n-1}))$ where $S^*$ is some line bundle of degree $d$ and $s_i \in H^0(S^*)$, subject to the same stability condition (\ref{stability}).  In this notation, $(C,(s_0,...,s_{n-1})) \sim (C',(s'_0,...,s'_{n-1}))$ if and only if there exists $\lambda \in \C^*$ and $\phi: C \stackrel{\sim}{\rightarrow} C'$ such that $\lambda \cdot s'_i \circ \phi = s_i$, for all $i$.

In the case $n=1$ then, $\qg1d$ parameterizes pairs $(C,D)$, $C$ a curve of genus 1 and $D$ an effective divisor of degree $d$ on $C$, up to automorphisms of $C$.  Stability requires $D$ to have at least one point on any rational component with exactly two nodes, and $(C,D) \sim (C',D')$ if there is an isomorphism $\phi: C \rightarrow C'$ with $\phi(D) = D'$.  We conclude that $\qg1d$ is isomorphic to the quotient $\Mdepsilon2/S_d$.

\section{Topology of $\qg1d$} \label{sec:top1}
In this section we compute the Poincar\'{e} polynomial of $\qg1d$.  The main tool used is the virtual Poincar\'{e} polynomial, which is additive on locally closed stratifications.  The stratification we use here is the one introduced earlier, by isomorphism type of the underlying curve.  $\qg1d$ has $d+1$ strata: $U_d$, the strata where $C$ is a smooth genus 1 curve, and $\Delta_d^i$, the strata where $C \simeq C_i$, for $1 \leq i \leq d$.  Our task reduces to computing the virtual Poincar\'{e} polynomial of each stratum.  We begin by recording the definition and some basic properties of the virtual Poincar\'{e} polynomial.

\subsection{Introduction to the Virtual Poincar\'{e} Polynomial}
Given a compact K\"{a}hler manifold $M$ and any $U \subset M$ the complement of a normal crossing divisor, there is a mixed Hodge structure on the cohomology groups with compact support $H_c^k(U,\Q)$\cite{Deligne},\cite{Voisin}.  This mixed Hodge structure gives a weight filtration $W$ on these cohomology groups, and the coefficients of the virtual Poincar\'{e} polynomial are defined from the dimensions of these graded pieces

\begin{defn}
$$P^{vir}_U(t) := \sum_{m,k}(-1)^{k+m} \mathrm{dim}(Gr^m_W(H^k_c(U)) t^m.$$
\end{defn}

We collect here a number of properties of the virtual Poincar\'{e} polynomial.  The first two can be found in \cite{Fulton}.

\begin{itemize}
\item{If $X$ is a smooth complete orbifold, then the Poincar\'{e} polynomial and virtual Poincar\'{e} polynomial are equal, $P_X(t) = P^{vir}_X(t)$.}
\item{If $Z$ is a closed algebraic subset of $X$ and $U = X \setminus Z$, then $P_X(t) = P_U(t)+P_Z(t)$.  Hence $P^{vir}_X(t)$ is additive on locally closed strata.}
\end{itemize}

We give proofs for the remaining three.

\begin{lem} \label{virpp for fiberbundles}
Given a fiber bundle $\pi: X \rightarrow B$ whose local systems $\RipiQ$ are constant for all $i$, the virtual Poincar\'{e} polynomial is multiplicative, that is $P^{vir}_X(t) = P^{vir}_B(t) \times P^{vir}_F(t)$.
\end{lem}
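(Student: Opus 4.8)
The plan is to use the Leray spectral sequence for $\pi : X \to B$ together with the fact that, under the hypothesis that all local systems $\RipiQ$ are constant, the spectral sequence computing $H^*_c(X)$ degenerates and each term is a tensor product. First I would reduce to the statement for cohomology with compact support: since $P^{vir}_U(t)$ is defined via the weight filtration on $H^*_c(U,\Q)$, I want an identity of the form $H^k_c(X) \cong \bigoplus_{i+j=k} H^i_c(B) \otimes H^j_c(F)$ that is moreover compatible with the mixed Hodge structures (equivalently, with the weight gradings). The key input is the Leray spectral sequence with compact supports, $E_2^{p,q} = H^p_c(B, \RjpiQ) \Rightarrow H^{p+q}_c(X)$, whose $E_2$ page, because the local systems $\RjpiQ$ are constant with fiber $H^q_c(F)$, reads $E_2^{p,q} = H^p_c(B,\Q) \otimes H^q_c(F,\Q)$.

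The second step is to argue that the spectral sequence degenerates at $E_2$ and that the associated graded recovers $H^*_c(X)$ as the tensor product, \emph{at the level of mixed Hodge structures}. Degeneration I would get either from the multiplicativity of the virtual Poincaré polynomial under the weaker hypothesis (one can run the Euler-characteristic argument: $P^{vir}$ is, up to signs, an Euler characteristic of the $E_2$ page, and Euler characteristics are unchanged by passing to cohomology of the complex, so even without degeneration $\sum (-1)^{p+q}[E_2^{p,q}] = \sum_k (-1)^k [H^k_c(X)]$ in the Grothendieck group of mixed Hodge structures), or by invoking Deligne's theorem that the Leray spectral sequence of a smooth projective morphism — and more generally the relevant spectral sequences in the theory of mixed Hodge structures — are strict for the weight filtration. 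The cleanest route, and the one I would write up, is the Euler-characteristic one: taking the virtual Poincaré polynomial of both sides of the identity $\sum_{p,q}(-1)^{p+q}[E_2^{p,q}] = \sum_{k}(-1)^k[H^k_c(X)]$ in $K_0(\mathrm{MHS})$, and using that $P^{vir}$ is a ring homomorphism out of $K_0(\mathrm{MHS})$ (additivity on short exact sequences, multiplicativity on tensor products — the latter because the weight filtration of a tensor product is the convolution of the weight filtrations), yields $P^{vir}_X(t) = \sum_{p,q} P^{vir}_{H^p_c(B)}(t)\, P^{vir}_{H^q_c(F)}(t) = P^{vir}_B(t)\cdot P^{vir}_F(t)$.

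The main obstacle, and the point that needs the most care, is justifying that the weight filtration on $H^k_c(X)$ is the one induced by the product structure — i.e.\ that the Leray filtration is a filtration \emph{by mixed Hodge substructures} and that the isomorphisms $E_2^{p,q} \cong H^p_c(B) \otimes H^q_c(F)$ respect weights. This is where the constancy of $\RjpiQ$ is essential: a nonconstant local system would obstruct both the identification of $E_2^{p,q}$ as an honest tensor product and the compatibility of the Hodge structures. I would cite the general machinery (Deligne's \cite{Deligne} construction of mixed Hodge structures and its functoriality, and the fact that for a locally trivial fibration with constant monodromy the Leray spectral sequence is a spectral sequence of mixed Hodge structures — see also \cite{Voisin}) rather than reprove it, since the paper's use of this lemma is for concrete fiber bundles (projective-space bundles, products) where constancy of the $\RjpiQ$ is transparent. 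With that compatibility granted, the computation collapses to the Euler-characteristic bookkeeping sketched above, and the proof is short.
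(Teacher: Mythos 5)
Your proposal uses the same basic framework as the paper's proof — the Leray spectral sequence with compact supports for $\pi$, the identification $E_2^{p,q} \cong H^p_c(B,\Q)\otimes H^q_c(F,\Q)$ coming from constancy of the $\RjpiQ$, compatibility with mixed Hodge structures (for which the paper cites Saito's theory of mixed Hodge modules), and the final alternating-sum bookkeeping. Where you diverge, and where your write-up is actually tighter, is in noticing that degeneration at $E_2$ is not needed: you run the Euler-characteristic argument in $K_0(\mathrm{MHS})$, using only that each page of the spectral sequence is obtained from the previous one as cohomology of a complex and that $P^{vir}$ factors as an additive (indeed, a ring) homomorphism out of $K_0(\mathrm{MHS})$. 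The paper, by contrast, first invokes Deligne's degeneration theorem for the ordinary Leray sequence of a projective fibration and then cites Saito for degeneration of the compactly supported version; only afterward does it expand the sum. Since the lemma is stated for an arbitrary fiber bundle (not necessarily a smooth projective morphism in the setting of Deligne's theorem), your route is cleaner — degeneration is unnecessary once one knows the spectral sequence is one of mixed Hodge structures — and the point you flag as the real obstacle (that the $E_2$ isomorphism is weight-strict and that the Leray filtration is by Hodge substructures) is indeed the correct one to isolate; the paper handles it by the citation to Saito, which you could adopt in place of the vaguer appeal to Deligne and Voisin.
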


\begin{proof}
The standard tool for computing the cohomology of the fiber bundle $X$ is the Leray spectral sequence $E^{i,j}_r \Rightarrow H^{i+j}(X,\Q)$.  Deligne showed \cite{Deligne} that for a projective fibration this spectral sequence degenerates at the $E_2$ term, where $E^{i,j}_2 = H^i(B, R^j \pi_! \Q)$.  By M. Saito's theory of mixed Hodge modules, the Leray spectral sequence for cohomology with compact supports, which also degenerates at the $E_2$ term, $$E^{i,j}_2 = H_c^i(B, R^j \pi_! \Q) \Rightarrow H_c^{i+j}(X,\Q)$$ is a spectral sequence of mixed Hodge structures \cite{Saito}.

Assuming the local systems $\RipiQ$ are trivial for all $i$, by an analogue of the universal coefficient theorem $H_c^i(B,R^j \pi_* \Q) = H_c^i(B, \Q) \otimes H_c^j(F,\Q)$, so

\begin{flushleft}
\begin{flalign*}
P^{vir}_X(t)  & :=  \sum_{m,k}(-1)^{k+m} \mathrm{dim}(Gr^m_W(H^k_c(X)) t^m\\
& =  \sum_{m,k}(-1)^{k+m} \mathrm{dim}(Gr^m_W(H_c^k(B,R^j \pi_* \Q)) t^m\\
& =  \sum_{a+b=m, i+j=k} (-1)^{i+j+a+b} \mathrm{dim}(Gr_W^a(H_c^i(B)) \otimes Gr_W^b(H_c^j(F)) t^{m}\\
& =  \left(\sum_{a,i} (-1)^{a+i} \mathrm{dim}\left(Gr_W^a (H_c^i(B,\Q))\right) t^a \right)  \left(\sum_{b,j} (-1)^{b+j} \mathrm{dim}(Gr_W^b \left(H_c^j(F,\Q))\right) t^b \right)\\
& =  P^{vir}_B(t) \times P^{vir}_F(t).
\end{flalign*}
\end{flushleft}

\end{proof}

\begin{lem} \label{PnModGp}
If a finite group $G$ acts holomorphically on $\p^r$, the virtual Poincar\'{e} polynomial of the orbifold $\p^r/G$ is $1 + t^2 + ... + t^{2r}$.
\end{lem}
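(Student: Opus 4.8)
The plan is to pass from the virtual Poincar\'{e} polynomial to the ordinary one, and then to identify the cohomology of $\mathbb{P}^r/G$ with the $G$-invariant cohomology of $\mathbb{P}^r$, which will turn out to be all of it. Since $\mathbb{P}^r/G$ is a smooth complete complex orbifold, the first of the listed properties of the virtual Poincar\'{e} polynomial gives $P^{vir}_{\mathbb{P}^r/G}(t) = P_{\mathbb{P}^r/G}(t)$, so it is enough to determine the Betti numbers of $\mathbb{P}^r/G$. With rational coefficients these are computed by invariants: if $\pi : \mathbb{P}^r \to \mathbb{P}^r/G$ denotes the quotient map, the transfer identity $\tfrac{1}{|G|}\pi_*\pi^* = \mathrm{id}$ realizes $H^k(\mathbb{P}^r/G,\mathbb{Q})$ as a direct summand of $H^k(\mathbb{P}^r,\mathbb{Q})$ with image $H^k(\mathbb{P}^r,\mathbb{Q})^G$ (equivalently one may use that $\mathbb{Q}[G]$ is semisimple, or that $(\pi_*\mathbb{Q})^G = \mathbb{Q}_{\mathbb{P}^r/G}$ and $G$ has no higher rational cohomology).

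It then remains to check that $G$ acts trivially on $H^*(\mathbb{P}^r,\mathbb{Q})$. A holomorphic action of $G$ on $\mathbb{P}^r$ is the same as a homomorphism $G \to \mathrm{Aut}(\mathbb{P}^r) = PGL_{r+1}(\mathbb{C})$, and since $PGL_{r+1}(\mathbb{C})$ is connected, every element of the image can be joined to the identity by a path; the corresponding automorphism of $\mathbb{P}^r$ is therefore homotopic to the identity and induces the identity on cohomology. Hence $H^k(\mathbb{P}^r/G,\mathbb{Q}) = H^k(\mathbb{P}^r,\mathbb{Q})$, which is $\mathbb{Q}$ for $k$ even with $0 \le k \le 2r$ and $0$ otherwise, so $P^{vir}_{\mathbb{P}^r/G}(t) = 1 + t^2 + \cdots + t^{2r}$.

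I do not expect a serious obstacle here; the only points worth stating carefully are that the cohomology entering the definition of $P^{vir}$ is that of the coarse space $\mathbb{P}^r/G$ (which, rationally, agrees both with the $G$-invariants of $H^*(\mathbb{P}^r)$ and with the cohomology of the quotient stack), and the identification of quotient cohomology with invariants for a finite group with $\mathbb{Q}$-coefficients --- both of which are standard. One could alternatively avoid the ``smooth complete orbifold'' reduction and argue directly that $H^k_c(\mathbb{P}^r/G,\mathbb{Q}) = H^k_c(\mathbb{P}^r,\mathbb{Q})^G$ as mixed Hodge structures, using that $\mathbb{P}^r$ is smooth projective (so its cohomology is pure) and that $G$ acts trivially on it, but invoking the already-recorded property is shorter.
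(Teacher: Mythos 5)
Your proof is correct and reaches the same conclusion, but it handles the key step — showing the $G$-invariants of $H^{2j}(\p^r,\Q)$ have full rank — by a genuinely different route. The paper does not argue that $G$ acts trivially on cohomology; instead it takes the generator $[S]$ of $H^{2j}(\p^r,\Q)$ (a linear subspace), forms the orbit sum $\alpha = \sum_{g\in G} g(S)$, and observes that because each $g$ is holomorphic (hence orientation preserving) there is no cancellation, so $\alpha$ is a nonzero $G$-invariant class. You instead use the identification $\mathrm{Aut}(\p^r) = PGL_{r+1}(\C)$ together with the connectedness of $PGL_{r+1}(\C)$ to conclude that each $g$ is isotopic to the identity and therefore acts trivially on all of $H^*(\p^r,\Q)$. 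Your version is more conceptual and gives the stronger statement that the $G$-action on cohomology is trivial, not merely that the invariants are one-dimensional; the paper's version is more elementary in the sense that it does not invoke the structure of $\mathrm{Aut}(\p^r)$ or a connectedness/homotopy argument, and the "no-cancellation by positivity" idea applies more broadly to holomorphic finite-group actions on any smooth projective variety whose even cohomology is generated by algebraic cycles. Both are complete proofs, and your handling of the reduction $P^{vir}_{\p^r/G} = P_{\p^r/G}$ and of $H^*(\p^r/G,\Q) = H^*(\p^r,\Q)^G$ via transfer matches the paper's.
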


\begin{proof}
If a finite group $G$ acts holomorphically on a smooth projective variety $X$, $H^j(X/G, \Q) = H^j(X,\Q)^G$.  For smooth orbifolds the virtual Poincar\'{e} polynomial and Poincar\'{e} polynomial are equal so it will suffice to compute the ranks of $H^j(\p^r, \Q)^G$.

For $j$ odd, the rank is trivially zero.  For $j$ even, $H^j(\p^r,\Q)$ is one-dimensional and generated by the class of a linear subspace $S$.  Let $$\alpha := \sum_{g \in G} g(S) $$ be the $G$-orbit of $S$.  By assumption each $g \in G$ acts holomorphically on $\p^r$, in particular is orientation preserving.  So there is no cancellation in this sum and $\alpha$ is nonzero and by construction in $H^j(\p^r,\Q)^G$.  We conclude that $H^j(\p^r,\Q)^G$ is one-dimensional for $j$ even.
\end{proof}

\begin{lem} \label{CnModGp}
If a finite group $G$ acts linearly on $\C^r$, the virtual Poincar\'{e} polynomial of the orbifold $\C^r/G$ is $t^{2r}$.
\end{lem}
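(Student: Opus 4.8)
The plan is to exploit the same principle used in the proof of Lemma \ref{PnModGp}, namely that for a finite group $G$ acting on a space $X$ one has $H^k_c(X/G,\Q) = H^k_c(X,\Q)^G$, so that the virtual Poincar\'{e} polynomial of $\C^r/G$ is computed from the $G$-invariant part of the compactly supported cohomology of $\C^r$. Since $\C^r$ is smooth of real dimension $2r$, Poincar\'{e}--Lefschetz duality gives $H^k_c(\C^r,\Q) \cong H^{2r-k}(\C^r,\Q)$, which is $\Q$ concentrated in degree $k = 2r$ and zero otherwise. Moreover this top group carries a pure Hodge structure of weight $2r$ (it is the dual of $H^0$, or equivalently it is generated by the class of a point), so in the notation of the definition it contributes $(-1)^{2r+2r} t^{2r} = t^{2r}$ to $P^{vir}$, with no contribution from any other degree.

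It remains to check that the $G$-action on $H^{2r}_c(\C^r,\Q) \cong \Q$ is trivial, so that passing to invariants does not kill this class. This is where linearity of the action is used: each $g \in G$ acts on $\C^r$ as a $\C$-linear automorphism, hence is holomorphic and in particular orientation-preserving on the underlying real manifold $\R^{2r}$. Therefore $g$ acts as the identity on the top compactly supported cohomology $H^{2r}_c(\C^r,\Q)$ (equivalently, it preserves the fundamental class / the orientation class). Hence $H^{2r}_c(\C^r,\Q)^G = \Q$ and all lower groups vanish, giving $P^{vir}_{\C^r/G}(t) = t^{2r}$.

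I do not anticipate any genuine obstacle here; the only point requiring a word of care is the identification $H^k_c(X/G,\Q) = H^k_c(X,\Q)^G$ for the noncompact quotient, but since $G$ is finite this follows exactly as in the compact case (the transfer argument works verbatim over $\Q$, or one simply notes $\C^r/G$ is a rational cohomology manifold and computes with a $G$-invariant triangulation). One may equally phrase the argument using the stratification property already recorded: compactify to $\p^r/G'$ for a suitable finite group, and subtract the virtual Poincar\'{e} polynomial of the complementary (lower-dimensional) strata via Lemma \ref{PnModGp}; but the duality argument above is the cleanest.
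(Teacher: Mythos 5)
Your proof is correct, but it takes a different route from the paper's. The paper deduces the affine case from the projective one: it extends the linear $G$-action to $\p^r$, observes that $\C^r$ and the hyperplane at infinity $\p^{r-1}$ are invariant strata, and then applies additivity of the virtual Poincar\'{e} polynomial together with Lemma \ref{PnModGp} to both $\p^r/G$ and $\p^{r-1}/G$, obtaining $P^{vir}(\C^r/G) = (1 + t^2 + \dots + t^{2r}) - (1 + t^2 + \dots + t^{2r-2}) = t^{2r}$. You instead argue directly: you invoke the identification $H^k_c(X/G,\Q) \cong H^k_c(X,\Q)^G$ for a finite quotient, note that $H^*_c(\C^r,\Q)$ is concentrated in degree $2r$ with pure weight $2r$, and use holomorphicity (hence orientation-preservation) of the $G$-action to conclude the top compactly supported class is $G$-invariant. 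This is essentially the compactly supported analogue of the argument the paper uses to prove Lemma \ref{PnModGp} itself, applied directly to the affine space. The paper's route sidesteps any discussion of compactly supported cohomology of non-compact spaces and of the transfer in that setting by reducing to the already-settled compact case; your route is more self-contained and does not need Lemma \ref{PnModGp} as input, at the cost of having to justify the invariants identification for $H^*_c$ of a non-compact quotient (which, as you note, is routine via transfer over $\Q$). One small remark on your closing aside: the compactification argument you sketch does not require passing to a different group $G'$ — the paper simply extends the same linear $G$-action projectively, and both $\C^r$ and the $\p^{r-1}$ at infinity are $G$-invariant.
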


\begin{proof}
If $G$ acts linearly on $\C^r$, this action can be extended to $\p^r$, and when this is done, the original $\C^r$ and the $\p^{r-1}$ at infinity are invariant subspaces of this action on $\p^r$.  Hence $$P^{vir}(\p^r/G) = P^{vir}(\C^r/G) + P^{vir}(\p^{r-1}/G),$$ the previous lemma applies to both projective spaces, and we have $$ 1 + t^2 + ... + t^{2r} = P^{vir}(\C^r/G) + 1 + t^2 + ... + t^{2r-2},$$ completing the proof.
\end{proof}

\subsection{Computation of the virtual Poincar\'{e} polynomial of the main stratum $U_d$} \label{openlocus}
\subsubsection{The cases $d=1$ and $d=2$}

For $d \geq 3$, the moduli space $M_{1,1}[d]$ of elliptic curves with full level $d$ structure will be central to our computation of the virtual Poincar\'e polynomial of $U_d$.  Because $M_{1,1}[d]$ is a fine moduli space only when $d \geq 3$, our general approach cannot be applied when $d=1$ and $d=2$.  We begin by computing the virtual Poincar\'{e} polynomials of $U_1$ and $U_2$ directly.

When $d=1$, $U_1 \simeq M_{1,1}$ so

\begin{equation} \label{d=1}
P^{vir}_{U_1} = t^2.
\end{equation}

\begin{figure}[t]
  \centering
  \includegraphics[width=4in]{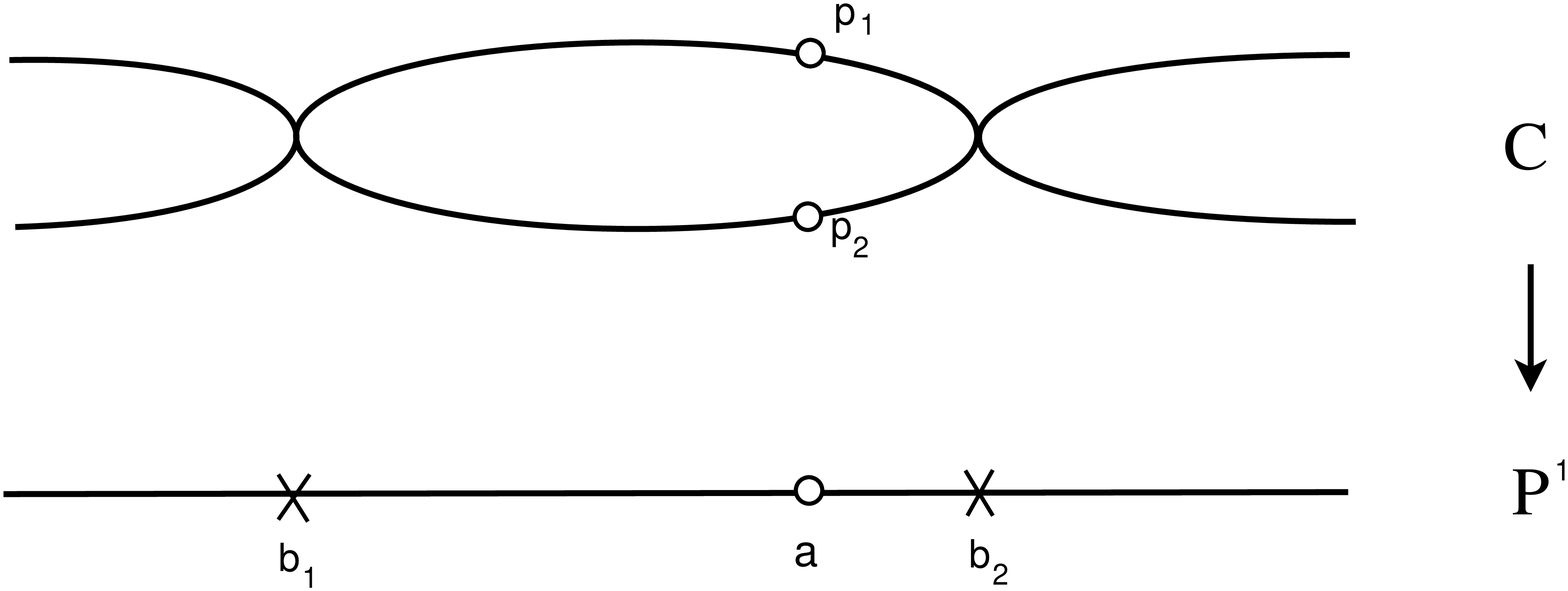}
  \caption{Map from $U_2$ to $M^{\langle 4,1 \rangle}.$}
\end{figure}

Let $M^{\langle 4,1 \rangle}$ denote the moduli space of $\p^1$ with five points $([b_1, b_2, b_3, b_4], a)$ on it, where the points $b_i$ are unlabelled and not permitted to collide with each other, and there is no constraint on the point $a$.  When $d=2$, $U_2 \simeq M^{\langle 4,1 \rangle}$ and the isomorphism $\Phi$ between these spaces is the following.  Take a point $(C, (p_1 + p_2)) \in U_2$.  The linear system $H^0(\OO_C(p_1+p_2))$ determines a double cover $f: C \rightarrow \p^1$.  Let $b_1,...,b_4$ be the distinct branch points of $f$ and $a = f(p_1) = f(p_2)$.  Then $\Phi (C, (p_1 + p_2)) = ([b_1, b_2, b_3, b_4], a)$.

To compute $P^{vir}_{U_2}(t)$, we use the following lemma, which is a special case of Theorem 5.4 of \cite{GetzlerPandharipande}.

\begin{lem} \label{modconnectedgp}
Let $G$ be a connected algebraic group and $X$ a quasi-projective variety a $G$-action.  If the action of G on X is almost-free, that is,
the isotropy groups of the action are finite, then $P^{vir}_X (t) = P^{vir}_G (t) P^{vir}_{X/G} (t).$
\end{lem}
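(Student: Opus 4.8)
The plan is to reduce the statement to the case of a genuine $G$-torsor $X\to X/G$, which is then essentially Lemma \ref{virpp for fiberbundles}, and to get from the general almost-free situation to that case by stratifying the quotient. The key observation making the torsor case work is that $G$ is \emph{connected}: for a $G$-torsor $\pi:X'\to X'/G$ the local systems $R^j\pi_!\Q$ on $X'/G$ have monodromy factoring through the action of $\pi_0$ of the structure group on $H^*_c(G,\Q)$, and $\pi_0(G)=1$, so these local systems are constant. Hence $\pi$ is exactly a fiber bundle of the type to which the argument of Lemma \ref{virpp for fiberbundles} applies, giving $P^{vir}_{X'}(t)=P^{vir}_G(t)\,P^{vir}_{X'/G}(t)$.

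For the reduction, since $X$ is quasi-projective and $G$ connected acts with finite stabilizers, a geometric quotient exists at least after passing to a $G$-invariant dense open, which is all the stratification needs. Stratify $X/G$, and correspondingly $X$, by the order of the stabilizer. Over the generic (free) stratum $\pi$ is an honest $G$-torsor and the previous paragraph applies. Over a smaller stratum, a further stratification together with a slice argument reduces one to the action of a connected group whose stabilizer is a fixed finite group $H$; a finite normal subgroup of a connected group is central, so this $H$ acts trivially on the stratum, the action factors through the connected group $G/H$ with trivial stabilizers there, and $P^{vir}_{G/H}(t)=P^{vir}_G(t)$ because $[G]=[G/H]$ in the Grothendieck ring of complex varieties (same flag variety, same maximal torus up to isomorphism of varieties, same unipotent part). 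Thus the torsor computation applies on every stratum with the same constant factor $P^{vir}_G(t)$, and by additivity of $P^{vir}$ on locally closed strata, $P^{vir}_X(t)=P^{vir}_G(t)\sum_i P^{vir}_{X_i/G}(t)=P^{vir}_G(t)\,P^{vir}_{X/G}(t)$.

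One point deserves comment: $G$ is affine, not projective, so Deligne's degeneration theorem is unavailable for the ordinary Leray sequence of $\pi$. This is harmless for $P^{vir}$, since what one actually uses (as in the proof of Lemma \ref{virpp for fiberbundles}) is the compactly supported Leray spectral sequence, which by Saito's theory of mixed Hodge modules \cite{Saito} is a spectral sequence of mixed Hodge structures; for any such spectral sequence the alternating count $\sum_{k,m}(-1)^{k+m}\dim Gr^m_W(\cdot)\,t^m$ is unchanged from page to page, and with the local systems $R^j\pi_!\Q$ constant this already forces $P^{vir}_{X'}(t)=P^{vir}_{X'/G}(t)\cdot P^{vir}_G(t)$. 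I expect the main obstacle to be the bookkeeping on the non-free strata — the slice reduction and the identity $[G]=[G/H]$ — since the torsor case itself, once the local systems are known to be constant, is no more than Lemma \ref{virpp for fiberbundles}; in any case the entire statement is subsumed by \cite[Theorem 5.4]{GetzlerPandharipande}, and our Lemma is the quotient case of it.
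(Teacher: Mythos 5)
The paper does not actually prove this lemma: it is stated and immediately attributed to Theorem~5.4 of \cite{GetzlerPandharipande}, with no argument given. So your proposal is a proof attempt for a fact the paper only cites. Your torsor case is fine: for a $G$-torsor with $G$ connected the monodromy on $H^*_c(G)$ factors through $\pi_0(G)=1$, so the local systems $R^j\pi_!\Q$ are constant and the spectral-sequence count from Lemma~\ref{virpp for fiberbundles} (run with compact supports, as you say) gives the factorization.

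The stratification step, however, has a genuine gap. You reduce to a locally closed piece on which the stabilizer is a \emph{fixed} finite $H$, then invoke ``a finite normal subgroup of a connected group is central'' to factor the action through $G/H$. But for such a piece to be $G$-invariant the group $H$ must already be normal in $G$, and in an almost-free action the stabilizers along a stratum are only well defined up to conjugacy --- they need not be normal, and for $G$ with trivial center (e.g.\ $\pgl2$) a nontrivial finite subgroup is never central. This is exactly what happens in the application in the paper: $\pgl2$ acting on configurations of four unordered points of $\p^1$ has generic stabilizer the Klein four-group, which is not normal in $\pgl2$, so your reduction does not apply even to the case the lemma is used for. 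The locus of points with stabilizer literally equal to $H$ is only $N_G(H)$-invariant, and $N_G(H)$ (and $N_G(H)/H$) can be disconnected, which breaks the reduction to a connected group acting freely. The correct fix, which keeps the spirit of your argument, is to stratify by the \emph{conjugacy class} of the stabilizer. On such a stratum $X^{(H)}$ the quotient map is a fiber bundle with fiber $G/H$ whose structure group is $W=N_G(H)/H$; $W$ acts on $H^*_c(G/H)\cong H^*_c(G)^H$ by right translations of $G$, which are homotopic to the identity since $G$ is connected, so this action is trivial even though $W$ may be disconnected. Hence the local systems are constant, $P^{vir}_{X^{(H)}}=P^{vir}_{G/H}\,P^{vir}_{X^{(H)}/G}$, and combined with $P^{vir}_{G/H}=P^{vir}_G$ (which you can get directly from $H^*_c(G/H)=H^*_c(G)^H=H^*_c(G)$, $H$ acting trivially by the same translation argument --- cleaner than the flag-variety count) additivity over strata finishes the proof.
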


Let $\mathfrak{M}^{\langle 4 \rangle}$ denote the parameter space of 4 distinct unlabelled points on a \textit{fixed} $\p^1$, and similarly $\mathfrak{M}^{\langle 4,1 \rangle}$ the space of 4 distinct unlabelled points and 1 unconstrained point on again, a \textit{fixed} $\p^1$.  $\pgl2$ acts naturally on both these spaces.  Note that $\mathfrak{M}^{\langle 4 \rangle}/\pgl2 \simeq \A^1$, because 4 distinct branch points on $\p^1$, up to $Aut(\p^1)$, determines a smooth $g=1$ curve, and conversely.  So

$$U_2 \simeq M^{\langle 4,1 \rangle} \simeq \mathfrak{M}^{\langle 4,1 \rangle}/\pgl2 \simeq (\p^1 \times \mathfrak{M}^{\langle 4 \rangle}) / \pgl2$$.

Applying Lemma \ref{modconnectedgp},

\begin{equation}\label{d=2}
P^{vir}_{U_2}(t) = P^{vir}_{(\p^1 \times \mathfrak{M}^{\langle 4 \rangle}) / \pgl2}(t) \\
= \frac{P^{vir}_{\p^1}(t) P^{vir}_{\mathfrak{M}^{\langle 4 \rangle}}(t)} {P^{vir}_{\pgl2} (t)} = P^{vir}_{\p^1}(t) P^{vir}_{\mathfrak{M}^{\langle 4 \rangle} / \pgl2} (t) \\
= P^{vir}_{\p^1}(t) P^{vir}_{\A^1}(t) = (1+t^2)(t^2).
\end{equation}

For the remainder of the discussion of $U_d$, we assume $d \geq 3$.

\subsubsection{Construction of $U_d$ as the quotient of a projective bundle}
To compute the virtual Poincar\'{e} polynomial of $U_d$, we give an explicit construction of $U_d$.  We begin with some background about moduli spaces of elliptic curves.  For all but two elliptic curves, the automorphism group of the curve is $\Ztwo$.  The exceptions are $\xi_4$ and $\xi_6$, with automorphism groups $\Z/4$ and $\Z/6$, respectively.  We will write $\widetilde{M}_{1,1} = \Moo \setminus \{\xi_4, \xi_6\}$, and generally for any moduli space $X$ of genus 1 curves and additional data, will denote by $\widetilde{X}$ the moduli space of a genus 1 curve other than $\xi_4$ or $\xi_6$, with the same additional data.

\begin{defn}
Fix $\zeta_d$ a $d^{th}$ root of unity.  $M_{1,1}[d]$ is the moduli space of elliptic curves with full level $d$ structure, and parameterizes the data of an elliptic curve $(C,p)$ with a basis $(e_1,e_2)$ of the $d$-torsion of $(C,p)$, such that the Weil pairing $\langle e_1,e_2 \rangle = \zeta_d$.
\end{defn}

$G = Sp_2(\Zd)$ acts on $\Moo[d]$ in the following way: for $g \in G$, $g \cdot (C,p,(e_1,e_2)) = (C, p, (g e_1, g e_2)).$  The subgroup $\langle -1,-1 \rangle \subset Sp_2(\Zd)$ acts trivially, and in fact $\Moo$ is the quotient $\rho: \Moo[d] \rightarrow \Moo[d]/(Sp_2(\Zd) / (\Z/2)) \simeq \Moo$.  Moreover, $Sp_2(\Zd) / (\Z/2)$ acts freely on $\tilMoo[d]$, so $\widetilde{\rho}: \tilMoo[d] \rightarrow \tilMoo$ is etale, and $\rho$ is ramified exactly over $\xi_4$ and $\xi_6$.

As we've taken $d \geq 3$, $\Moo[d]$ is a fine moduli space and carries a universal curve $\pi: \CC \rightarrow \Moo[d]$ together with a section $\sigma$.  By cohomology and base change $\pi_* \OO_\CC(d \sigma)$ is a vector bundle over $\Moo[d]$, because for any smooth genus 1 curve $C$ $H^1(\OO_C(dp)) = H^0(\omega_C(-dp)) = H^0(\OO_C(-dp) = 0$, so $R^1 \pi_* \OO_\CC(d \sigma) = 0$, while all higher $R^i \pi_* \OO_\CC(d \sigma)$ vanish on any curve.

The projectivization $P_d := \p(\pi_* \OO_\CC(d \sigma))$ is a $\pdmo$ bundle over $\Moo[d]$.  This is a fine moduli space parameterizing the data $(C, p, (e_1, e_2), [s])$ where $(C,p)$ is an elliptic curve, $(e_1,e_2)$ a basis for the $d$-torsion of that elliptic curve with fixed Weil pairing, $s \in H^0(\ocdp)$ a meromorphic function on $C$ with at worst a pole of order $d$ at $p$ (because $\ocdp$ comes with a canonical embedding in $K^*$), and $[s]$ the class of that function up to scaling by $\Cstar$.

There are two natural finite group actions on $P_d$.  The first is by $H := \Zdsq$.  To describe this action, note that on a smooth elliptic curve $(C,p)$, $H^0(\OO_C(dq-dp)) = 0$ unless $q$ is a $d$-torsion, in which case $\OO_C(dq-dp) \simeq \OO_C$ and $H^0(\OO_C(dq-dp)) = \C$.  Let $\tau_q$ denote the translation of $C$ sending $p$ to $q$.

Fix $(\alpha, \beta) \in H$.  Let $q = p + \alpha e_1 + \beta e_2$ and $\tau_{(\alpha, \beta)} = \tau_{q}$.  Pick a nonzero $f_{\alpha,\beta} \in H^0(\OO_C(dp-dq)) = \C$.  Define the action of $H$ on $\p(\pi_* \OO_\CC(d \sigma))$ by $(\alpha, \beta) \cdot (C, p, (e_1, e_2), [s]) = (C, p, (e_1, e_2), [\tau_{(\alpha, \beta)}^{-1} \circ (f_{\alpha,\beta} \cdot s)])$.  Because $f_{\alpha,\beta}$ was unique up to scaling, this action is well defined and $H$ acts fiberwise on the bundle $\pd$.

The second action is by $G  = Sp_2(\Zd)$.  Given $g \in G$, $g \cdot (C,p,(e_1,e_2), [s]) = (C, p, (g e_1, g e_2), [s])$.  Unlike with the action of $G$ on $\Moo[d]$, the subgroup $\langle -1,-1 \rangle$ does not act trivially on $\pd$.  Specifically, at any point $x \in \tilMoo[d]$ the isotropy group of this action is $\Ztwo$, while the isotropy groups at $(\xi_4,p)$ and $(\xi_6,p)$ are $\Z/4$ and $\Z/6$, respectively.  These isotropy groups all act nontrivially on the fibers of $\pd$, so in the quotient of $\pd$ by $G$, the fibers over $\xi_4$ and $\xi_6$ will be further quotients by $\Z/2$ and $\Z/3$, respectively.  On $\tilPd$, the action of $G$ can be considered in two parts - a $\Ztwo$ action that acts fiberwise and an equivariant, free $G/\Ztwo$ action.

Recall that $U_d$ can be described as the moduli space parameterizing $(C,D)$, $C$ a smooth genus 1 curve and $D$ an effective divisor of degree $d$, up to automorphisms of $C$.  There is a forgetful map $\Phi: P_d \rightarrow U_d$ defined on the moduli functors, $\Phi(C,p,(e_1,e_2),[s]) = (C, |s|)$, where $|s|$ is the zero set of the section $s$ of $\ocdp$.  This induces a map of the schemes $P_d \rightarrow U_d$.

\begin{prop}
$U_d = \Pd / H / G$.
\end{prop}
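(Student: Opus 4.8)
The plan is to realize $U_d$ as a geometric quotient directly through the forgetful morphism $\Phi\colon \Pd \to U_d$, $\Phi(C,p,(e_1,e_2),[s]) = (C,|s|)$. Concretely, I would show that $\Phi$ is surjective and invariant under the finite group $\Gamma := H \rtimes G$, that its fibers are exactly the $\Gamma$-orbits, and finally that the resulting bijective morphism $\overline{\Phi}\colon \Pd/H/G = \Pd/\Gamma \to U_d$ is an isomorphism.

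First I would record the basic properties of $\Phi$. For surjectivity: given $(C,D)\in U_d$ with $C$ a smooth genus $1$ curve and $D$ effective of degree $d$, multiplication by $d$ on $\mathrm{Pic}^1(C)$ is onto $\mathrm{Pic}^d(C)$ (as $[d]\colon C\to C$ is surjective), so there is a point $p\in C$ with $\OO_C(dp)\cong\OO_C(D)$; then $H^0(\OO_C(dp))$ contains a section $s$ with $|s|=D$, and any full level $d$ structure $(e_1,e_2)$ with the prescribed Weil pairing completes a point of $\Pd$ over $(C,D)$. The $G$-invariance of $\Phi$ is clear, since $g\in Sp_2(\Zd)$ changes only the level structure, leaving $C$ and $|s|$ alone. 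For $H$-invariance, the computation defining the $H$-action shows that $(\alpha,\beta)\in H$ replaces $|s|$ by its translate under the $d$-torsion point $\alpha e_1+\beta e_2$; since translation is an automorphism of the curve $C$, this leaves the point $(C,|s|)\in U_d$ unchanged. As $G$ normalizes $H$ (conjugation by $g$ carries the translation action by a $d$-torsion point $w$ to the translation action by $g^{-1}w$), the two actions assemble into an action of $\Gamma = H\rtimes G$, and $\Phi$ descends to $\overline{\Phi}\colon \Pd/\Gamma\to U_d$.

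Next I would verify that the fibers of $\Phi$ are precisely the $\Gamma$-orbits, which gives injectivity of $\overline{\Phi}$. Take $x=(C,p,(e_1,e_2),[s])$ and $x'=(C',p',(e_1',e_2'),[s'])$ with $\Phi(x)=\Phi(x')$, so there is an isomorphism $\psi\colon C\to C'$ with $\psi(|s|)=|s'|$; since $\Pd$ is a fine moduli space, transporting $x'$ along $\psi$ we may assume $C=C'$ and $|s|=|s'|=:D$. Then $D\sim dp$ and $D\sim dp'$ force $p'-p$ to be $d$-torsion, so the translation $\tau$ with $\tau(p)=p'$ is an isomorphism of elliptic curves $(C,p)\xrightarrow{\sim}(C,p')$; using fineness of $\Pd$ again, $x=(C,p',(\tau e_1,\tau e_2),[\tau_*s])$, so $x$ and $x'$ now share the origin $p'$ while their divisors $D$ and $\tau(D)$ differ by a $d$-torsion translate. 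A suitable element of $H$ carries $\tau(D)$ back to $D$; after applying it, $x$ and $x'$ have equal origin and equal divisor, hence proportional sections of the same line bundle $\OO_C(dp')$ and therefore equal $[s]$, and they then differ only by a relabeling of the level structure realized by an element of $G$. Thus $x'\in\Gamma x$, and combined with $\Gamma$-invariance the fibers of $\Phi$ are exactly the $\Gamma$-orbits, so $\overline{\Phi}$ is bijective. This is the heart of the argument, and the delicate point is exactly this bookkeeping: absorbing the change of origin into an isomorphism of tuples at the cost of a $d$-torsion shift of $D$, then undoing that shift by $H$ and matching level structures by $G$, all while keeping the three kinds of data consistent.

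Finally I would promote $\overline{\Phi}$ to an isomorphism. The morphism $\Phi$ commutes with the projections of $\Pd$ and of $U_d$ to the coarse $j$-line of $\Moo$; since $\Pd$ is proper over this base --- a projective bundle over $\Moo[d]$, which is finite over $\A^1$ --- and $U_d$ is separated over it, $\Phi$ is proper, and properness descends to the quotient, so $\overline{\Phi}$ is proper; being also quasi-finite, $\overline{\Phi}$ is finite. Both $\Pd/\Gamma$ (a finite quotient of the smooth $\Pd$) and $U_d$ (an open subscheme of the coarse space of the smooth Deligne--Mumford stack $\qg1d$) are normal varieties, and over $\C$ a bijective morphism of varieties is birational; a finite birational morphism onto a normal variety is an isomorphism. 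Hence $U_d\cong \Pd/\Gamma = \Pd/H/G$.
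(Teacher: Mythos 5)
Your proof takes essentially the same approach as the paper: it uses the forgetful map $\Phi$, proves surjectivity via surjectivity of multiplication by $d$ on the Jacobian, and shows the fibers are exactly orbits of the finite group generated by $H$ and $G$, which is precisely the content of the paper's Lemmas \ref{phisurj} and \ref{GHorbit}. Your final paragraph (quasi-finite + proper + normal target implies the induced bijection is an isomorphism) makes explicit a step the paper leaves implicit, but this is a refinement of detail rather than a different route.
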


\begin{proof}
By Lemma \ref{phisurj} $\Phi$ is surjective, and by Lemma \ref{GHorbit} the preimage of every point in $U_d$ is a $G \times H$ orbit.
\end{proof}

\begin{lem} \label{phisurj}
$\Phi$ is surjective.
\end{lem}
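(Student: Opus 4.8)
The plan is to show that every $(C,D) \in U_d$ lies in the image of $\Phi$. Recall we are in the range $d \geq 3$, so $\Moo[d]$ is a fine moduli space and $\Pd = \ppiocds$ makes sense. Given such a point --- a smooth genus $1$ curve $C$ together with an effective divisor $D$ of degree $d$ --- the goal is to produce a point of $\Pd$ mapping to it. The one piece of data that $\Phi$ discards is the origin $p$, and the whole point of the argument is that we are free to choose $p$: once $p$ is fixed so that $D$ lies in the linear system $|dp|$, the remaining data are essentially forced.

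First I would choose $p$. I claim there is a point $p \in C$ with $D \sim dp$. Fix any base point $o \in C$, so Abel--Jacobi identifies $\mathrm{Pic}^0(C)$ with $C$; under this identification the condition $D \sim dp$ on $p$ becomes the equation $d\,[p - o] = [D - d o]$ in the group $C$. Multiplication by $d$, $[d]\colon C \to C$, is a surjective isogeny with kernel $C[d] \cong \Zdsq$, so this equation has a solution (indeed $d^2$ of them); fix one such $p$. Then $\OO_C(D) \cong \OO_C(dp)$, and since $D$ is effective, transporting the tautological section of $\OO_C(D)$ through a chosen isomorphism $\OO_C(D) \cong \OO_C(dp)$ yields a section $s \in H^0(\OO_C(dp))$, well defined up to $\Cstar$, with zero divisor $|s| = D$. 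Because $p$ is a smooth point, $[s]$ determines a point of the fibre of $\Pd \to \Moo[d]$ over any moduli point whose underlying pointed curve is $(C,p)$.

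Next I would pick a full level $d$ structure on $(C,p)$ with the prescribed Weil pairing. Since the Weil pairing is a perfect alternating pairing on $C[d] \cong \Zdsq$, starting from any basis we may rescale one of its two elements so that $\langle e_1, e_2 \rangle = \zeta_d$; hence such $(e_1,e_2)$ exist. The tuple $(C, p, (e_1,e_2), [s])$ now lies in $\Pd$: the stability condition (\ref{stability}) is automatic here, since $\omega_C$ is trivial and $\OO_C(dp)$ is ample of degree $d>0$ on the smooth curve $C$. Finally $\Phi(C, p, (e_1, e_2), [s]) = (C, |s|) = (C, D)$, which proves surjectivity.

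The only non-formal input is the surjectivity of $[d]\colon C \to C$ --- equivalently, that every degree $d$ line bundle on a genus $1$ curve is of the form $\OO_C(dp)$ --- which is precisely what allows an arbitrary effective degree $d$ divisor to be realized inside some complete linear system $|dp|$. After that choice there is no real obstacle: the section $s$ and the level structure exist for elementary reasons, and no stability condition is imposed on smooth genus $1$ curves (there being no rational components), so the argument is essentially bookkeeping once $p$ has been chosen.
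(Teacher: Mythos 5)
Your proof is correct and matches the paper's argument in essence: both choose a base point to put a group structure on $C$, use surjectivity of multiplication by $d$ to find $p$ with $D \sim dp$, pick any admissible level structure, and transport the section. You spell out the Abel--Jacobi bookkeeping slightly more explicitly than the paper does, but the approach is the same.
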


\begin{proof}
Fix a point $(C,D) \in U_d$.  To show that $\Phi$ is surjective it will suffice to produce a point $p \in C$ and a meromorphic function $f \in H^0(\ocdp)$ such that $D$ is the divisor of zeroes of $f$ as a section of the line bundle $H^0(\ocdp)$.  Pick any point $z \in C$ to produce an elliptic curve $(C,z)$.  Suppose $D = p_1 + ... + p_d$.  Using the group law on the elliptic curve, we can take the sum $q = p_1 + ... + p_d$, and then pick a point $p$ such that $d p = q$.  Finally, pick any basis $(e_1,e_2)$ of the $d$-torsion of $(C,p)$ with $\langle e_1, e_2 \rangle = \zeta_d$.  Then $\Phi(C,p,(e_1,e_2),[f]) = (C,D)$.
\end{proof}

\begin{lem} \label{GHorbit}
The preimage of each point in $U_d$ under $\Phi$ is a $G \times H$ orbit.
\end{lem}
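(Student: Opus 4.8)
The plan is to deduce the statement from two facts: that $\Phi$ is constant on the orbits of $G\times H$, and conversely that two points of $\Pd$ with the same image under $\Phi$ lie in a single orbit. Everything is checked on geometric points; recall that we are assuming $d\ge 3$, so $\Pd$ is a fine moduli space and its points are honest tuples $(C,p,(e_1,e_2),[s])$.

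\emph{Invariance.} First I would note that the $G$-action changes only the level structure $(e_1,e_2)$ and leaves $C$, $p$ and $[s]$ — hence the pair $(C,|s|)$ — unchanged, so $\Phi$ is $G$-invariant. For $(\alpha,\beta)\in H$, set $q=p+\alpha e_1+\beta e_2$; unwinding the definition of the $H$-action and doing a short divisor computation shows that the section $s'$ it produces again has a pole of order exactly $d$ at $p$ and no other poles, with $\mathrm{div}(s')=\tau_q(|s|)-dp$, so that its zero divisor is $\tau_q(|s|)$, the translate of $|s|$ by the $d$-torsion point $\alpha e_1+\beta e_2$. Since translation by any point of $C$ is an automorphism of the bare curve, $(C,|s|)$ and $(C,\tau_q(|s|))$ represent the same point of $U_d$; hence $\Phi$ is $H$-invariant as well, and the whole $G\times H$-orbit of a point maps to a single point of $U_d$.

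\emph{Fibers are single orbits.} Take $z_1,z_2\in\Pd$ with $\Phi(z_1)=\Phi(z_2)$, with underlying curves $C_1,C_2$, marked points $p_1,p_2$ and sections $s_1,s_2$. By the definition of $\Phi$ there is an isomorphism of curves $C_1\xrightarrow{\sim}C_2$ carrying $|s_1|$ to $|s_2|$; transporting $z_1$ along it is legitimate, because once its image is taken as the origin on $C_2$ such a map is an isomorphism of elliptic curves, hence carries the level structure and the Weil pairing to valid data, so I may assume $C_1=C_2=:C$ and $|s_1|=|s_2|=:D$. Since $s_i$ is a nonzero section of $\OO_C(dp_i)$ with zero divisor $D$, we get $dp_1\sim D\sim dp_2$, so $p_2$ is a $d$-torsion point of $(C,p_1)$ (equivalently, $p_1$ is a $d$-torsion point of $(C,p_2)$); translating by $p_2-p_1$, which is a group isomorphism $(C,p_1)\to(C,p_2)$ and again sends the level structure to a valid one, replaces $z_1$ by an isomorphic point with marked point $p_2$ and section of zero divisor $D':=\tau_{p_2-p_1}(D)$. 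Now $z_1$ and $z_2$ sit over the same $(C,p_2)$, and their two level structures differ by a unique $g\in Sp_2(\Z/d)=G$ because the level structures on $(C,p_2)$ form an $Sp_2(\Z/d)$-torsor; applying $g$, which does not touch the section, makes the level structures agree. At this point $g\cdot z_1$ and $z_2$ differ only in their section classes, which lie in $\p(H^0(\OO_C(dp_2)))$ with zero divisors $D'$ and $D$. As $p_1-p_2$ is $d$-torsion, the invariance step provides $h\in H$ — fixing the level structure — with $h\cdot(g\cdot z_1)$ having section of zero divisor $\tau_{p_1-p_2}(D')=D$; then $hg\cdot z_1$ and $z_2$ agree in curve, marked point, level structure and in the zero divisor of the section. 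Since $[s]\mapsto\mathrm{div}(s)$ identifies $\p(H^0(\OO_C(dp_2)))$ with the complete linear system $|dp_2|$, the two section classes coincide, so $z_2=hg\cdot z_1$ and $z_1,z_2$ lie in one $G\times H$-orbit.

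I expect the main difficulty to be not a single hard step but the bookkeeping: one has to check at every reduction that the transported tuple is still a point of $\Pd$, which each time comes down to the relevant translation being by a $d$-torsion point (so that $C[d]$ and the Weil pairing are preserved), and one has to track the successive translates of $D$ carefully enough to name exactly the right $h\in H$ at the last step. The only genuine computation is the divisor identity $\mathrm{div}(s')=\tau_q(|s|)-dp$ underlying $H$-invariance.
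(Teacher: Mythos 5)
Your proof is correct and follows the same basic strategy as the paper's: reduce to a common curve, align the marked point by a translation, align the level structure by an element of $G$, and then align the section by an element of $H$, using the fact that $[s]\mapsto \mathrm{div}(s)$ identifies $\p(H^0(\OO_C(dp)))$ with the complete linear system.

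The paper's version is terser: it assumes outright that both preimage tuples have the same underlying curve $C$ (which is harmless, since points of $\Pd$ are isomorphism classes), and it packages the translation and the two group elements into a single ``$g\cdot h$'' whose effect it writes directly in the translated representative $(C,\tau_{\alpha,\beta}p,\dots)$. Your version unbundles this into three explicit steps (transport along an isomorphism to fix $C$, then act by $g$, then by $h$) and also separately records the $G\times H$-invariance of $\Phi$, which the paper leaves implicit though it is needed to say the fiber is \emph{exactly} one orbit rather than merely contained in one. This costs a bit of bookkeeping but makes it clearer where the Weil-pairing compatibility is used (namely, that translations and isomorphisms of elliptic curves carry level structures to level structures, so the discrepancy is genuinely an element of $Sp_2(\Zd)$ and not merely of $GL_2(\Zd)$). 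Substantively, the two proofs are the same.
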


\begin{proof}
Fix a point $(C,D) \in U_d$, and suppose $(C, p, (e_1,e_2),[f]), (C, p', (e'_1,e'_2),[f'])$ are both in $\Phi^{-1}(C,D)$.  $\OO(D) = \OO(dp) = \OO(dp')$ so $p'$ is a $d$-torsion point of the elliptic curve $(C,p)$.  There is a unique $h = (\alpha, \beta) \in \Zdsq$ such that $p' = p + \alpha e_1 + \beta e_2$.  There is a unique $g \in Sp_2(\Zd)$ such that $(e'_1,e'_2) = (g \tau_{\alpha,\beta} e_1, g \tau_{\alpha,\beta} e_2)$.  It remains to show that $$g \cdot h (C, p, (e_1,e_2),[f]) := (C, \tau_{\alpha,\beta} p, (g \tau_{\alpha,\beta} e_1, g \tau_{\alpha,\beta} e_2), [\tau_{\alpha,\beta}^* f]) = (C, p', (e'_1,e'_2),[f']).$$

All equalities but $[\tau_{\alpha,\beta}^* f] = [f']$ are by construction, and the last is true because the isomorphism $H^0(\OO(dp))$ to $H^0(\OO(dp')$ is exactly $\tau_{\alpha,\beta}^*$.

\end{proof}

$G$ acts on $\widetilde{\Pd}$ by $\Ztwo$ on the fibers and the quotient $G/(\Ztwo)$ acts equivariantly.  So we can study $\widetilde{\Pd}/H/G$  by first taking the quotient $\widetilde{\Pd}/\hat{H}$, where $\hat{H} := H \times \Ztwo$ acts fiberwise, and then taking the further quotient by $\hat{G}:=G/(\Ztwo).$

$
\begin{CD}
\widetilde{P_d} \\
@VVV  \\
\widetilde{P_d}/\hat{H} @>>> \widetilde{P_d}/\hat{H}/\hat{G} \simeq \widetilde{U_d}\\
@VVV @VVV \\
\tilMoo[d] @>>> \tilMoo
\end{CD}
$

In the following, we will call $X$ a fiber bundle over base $B$ if $X$ is an algebraic variety mapping to $B$ and the map $f: X \rightarrow B$ is locally trivial in the etale topology.

\begin{prop} \label{Ud is fiber bundle}
$\widetilde{U}_d = \widetilde{P_d}/H/G$ is a fiber bundle over $\tilMoo$, with fibers $\pdmo/\hat{H}$.
\end{prop}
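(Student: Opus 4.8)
The plan is to prove the proposition in two stages, following the commutative diagram displayed above the statement. First I would form the fiberwise quotient $\tilPd/\hat{H}$ and check that it is a fiber bundle over $\tilMood$ with fiber $\pdmo/\hat{H}$. Then I would divide by the residual action of $\hat{G} = G/\Ztwo$, which covers the free action of $\hat{G}$ on $\tilMood$ with quotient $\tilMoo$, and argue that a fiber bundle remains a fiber bundle after such an equivariant quotient. Since $\widetilde{U_d} = \tilPd/H/G = (\tilPd/\hat{H})/\hat{G}$ by the construction recalled above, this gives the result.

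For the first stage, let $E$ denote the restriction to $\tilMood$ of the rank $d$ bundle $\pi_*\OO_\CC(d\sigma)$, so that $\tilPd = \p(E)$ is a $\pdmo$-bundle. The group $H = \Zdsq$ acts fiberwise on $\p(E)$: over $(C,p,(e_1,e_2))$ the element $(\alpha,\beta)$ acts by $\tau_{(\alpha,\beta)}^{-1}$ followed by multiplication by $f_{\alpha,\beta}$, a projective-linear transformation of $\p H^0(\OO_C(dp))$; together with the fiberwise involution from $\langle -1,-1\rangle \subset G$ this assembles the fiberwise $\hat{H}$-action. The key observation I would use is that this fiberwise action is \emph{rigid}: on every fiber it is the image in $PGL_d$ of (a lift to) the Heisenberg group of level $d$ in its Schr\"odinger representation, extended by the inversion involution --- that is, a fixed finite subgroup of $PGL_d$, the same up to simultaneous conjugation at every point of the base (a manifestation of the theory of theta groups). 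Granting this, \'etale-locally over any $V \to \tilMood$ one can trivialize $\p(E)|_V \cong V \times \pdmo$ compatibly with the $\hat{H}$-action, so that $(\tilPd/\hat{H})|_V \cong V \times (\pdmo/\hat{H})$. Hence $\tilPd/\hat{H} \to \tilMood$ is a fiber bundle with fiber $\pdmo/\hat{H}$, and it carries a $\hat{G}$-action lifting the deck transformations of $\widetilde{\rho}: \tilMood \to \tilMoo$.

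For the second stage, recall that $\widetilde{\rho}$ is \'etale Galois with group $\hat{G} = G/\Ztwo$ acting freely on $\tilMood$, so the lifted action of $\hat{G}$ on $Y := \tilPd/\hat{H}$ is free as well (a fixed point upstairs would lie over a fixed point downstairs, which is precisely why $\xi_4$ and $\xi_6$ were excluded). To see that $\widetilde{U_d} = Y/\hat{G} \to \tilMoo$ is a fiber bundle, I would fix $x \in \tilMoo$, choose an \'etale neighborhood $W \to \tilMoo$ over which the cover splits, $\tilMood \times_{\tilMoo} W \cong W \times \hat{G}$, shrink $W$ so that $Y$ trivializes over the sheet $W \times \{e\}$ as $W \times F$ with $F := \pdmo/\hat{H}$ by the first stage, and transport this trivialization by $\hat{G}$ to all sheets. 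This produces a $\hat{G}$-equivariant isomorphism $Y|_W \cong (W \times \hat{G}) \times F$ with $\hat{G}$ acting only on the middle factor, so $\widetilde{U_d}|_W \cong W \times F$. Since such $W$ cover $\tilMoo$, the morphism $\widetilde{U_d} \to \tilMoo$ is a fiber bundle with fiber $\pdmo/\hat{H}$.

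The hard part will be the rigidity statement in the first stage: a priori, quotienting the projective bundle by a merely fiberwise finite group action yields only a flat family with constant fibers, which need not be \'etale-locally trivial. What rescues the argument is that the fiberwise $\hat{H}$-action is not arbitrary but is controlled by the Heisenberg group and its unique irreducible representation, so it genuinely is ``the same action'' on every fiber; making this precise via theta-group theory supplies the equivariant \'etale-local trivialization needed. By comparison, the descent of fiber bundles along the free \'etale $\hat{G}$-cover and the bookkeeping with the commutative diagram are routine.
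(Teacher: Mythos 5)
Your two-stage plan --- first straighten the fiberwise $\hat{H}$-action on $\tilPd \to \tilMoo[d]$ to obtain a $\pdmo/\hat{H}$-bundle, then descend along the free \'etale $\hat{G}$-cover $\tilMoo[d] \to \tilMoo$ --- is precisely the paper's decomposition, and your second stage is carried out the same way. Where you diverge is in how the straightening in stage one is justified. You invoke the specific theta-group structure of the $\hat{H}$-action (the level-$d$ Heisenberg group in its Schr\"odinger representation, extended by inversion), arguing that it is the same finite subgroup of $PGL_d$ on every fiber up to conjugation. The paper instead proves a general-purpose Lemma \ref{StraightenOut}: \emph{any} fiberwise action of a finite group $H$ on a $\p^n$-bundle admits $H$-equivariant \'etale-local trivializations. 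Its proof pulls the $H$-action back from $PGL_n$ to $SL_n$, uses the discreteness of the moduli of finite groups to see that the resulting central extensions $\tilde{H}$ are locally constant, uses continuity of characters to see that the induced linear representations of $\tilde{H}$ are locally constant, and applies Schur's lemma to produce a line bundle of intertwiners whose nonvanishing sections give the equivariant trivialization.

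Consequently, your caveat in the final paragraph --- that a merely fiberwise finite group action yields only a flat family that ``need not be \'etale-locally trivial'' --- is overstated in the projective-bundle setting: Lemma \ref{StraightenOut} shows that rigidity of finite-group representations already supplies the equivariant trivialization for \emph{any} fiberwise finite action, not only for theta-group actions. You did correctly diagnose that some rigidity input is what makes the straightening work, and the theta-group argument you sketch is a valid and more concrete instance of the same phenomenon, tying the mechanism to the geometry of level structures and torsion translations. The paper's route is more general and reusable; yours trades generality for an explicit description of the acting group. Both reach the same conclusion.
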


\begin{proof}
Since $\widetilde{\Pd}$ is the projectivization of a vector bundle over $\tilMoo[d]$, it is Zariski locally trivial, which implies etale locally trivial.  By Lemma \ref{StraightenOut} $\widetilde{\Pd}/\hat{H}$ is a $\pdmo / \hat{H}$ bundle over $\tilMoo[d]$.  Since $\hat{G}$ acts freely and equivariantly on $\widetilde{\Pd}/\hat{H}$ over $\tilMoo[d]$, the bundle descends to a fiber bundle $\widetilde{\Pd}/H/G$ over $\tilMoo$ with fibers $\pdmo /\hat{H}$.
\end{proof}

\begin{lem} \label{StraightenOut}
Given a fiber bundle $X \rightarrow B$ with fibers $\p^n$ and an action of a finite group $H$ on $X$ such that $H$ acts fiberwise, the quotient $X/H$ is a fiber bundle over $B$.
\end{lem}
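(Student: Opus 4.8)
The plan is to reduce this to the statement that the bundle $X\to B$, equipped with its fiberwise $H$-action, is étale-locally $H$-equivariantly trivial. Once we know this, the quotient is immediate: over a sufficiently small étale neighborhood $V\to B$ we have an $H$-equivariant isomorphism $X\times_B V \cong \p^n\times V$, where $H$ acts only on the $\p^n$ factor (via some homomorphism $H\to \mathrm{PGL}_{n+1}$ which may a priori vary with the sheet, but can be taken constant after shrinking since $H$ is finite and the representation variety is discrete up to conjugacy), so $(X\times_B V)/H \cong (\p^n/H)\times V$. These local trivializations of $X/H$ glue because the transition functions for $X$ that are $H$-equivariant descend to transition functions for $\p^n/H$, giving $X/H\to B$ the structure of a fiber bundle with fiber $\p^n/H$.

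So the key step is the $H$-equivariant local triviality. First I would note that $X\to B$ is a projective bundle, so Zariski-locally on $B$ it is $\mathrm{PGL}_{n+1}$-torsor-twisted projective space; passing to an étale cover if necessary we may assume $X = \p(E)$ for a vector bundle $E$ on $B$, or even (shrinking $B$) that $X \cong \p^n\times B$ as a bundle, forgetting the $H$-action. The $H$-action on $\p^n\times B$ over $B$ is then given by a morphism $B\to \mathrm{Hom}(H,\mathrm{PGL}_{n+1})$, i.e.\ by a tuple of regular maps $B\to\mathrm{PGL}_{n+1}$ satisfying the group relations pointwise. Since $H$ is finite, the scheme of homomorphisms $H\to\mathrm{PGL}_{n+1}$ is a finite-type scheme whose connected components are single $\mathrm{PGL}_{n+1}$-conjugacy classes (rigidity of representations of finite groups in characteristic zero). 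Over a connected $B$, the classifying map lands in one such component, and étale-locally on $B$ (or after a further finite étale cover of $B$) one can choose a section of the $\mathrm{PGL}_{n+1}$-conjugation action, i.e.\ an element $g(b)\in\mathrm{PGL}_{n+1}$ conjugating the given representation at $b$ to a fixed one. Changing the trivialization by $g(b)$ makes the $H$-action constant, as desired.

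The main obstacle is precisely this last point — producing the conjugating family $g(b)$ regularly (étale-locally) rather than just pointwise. Concretely, one has a torsor under the centralizer $Z(\rho)\subset\mathrm{PGL}_{n+1}$ of the reference representation $\rho$, sitting over the locus where the representation is conjugate to $\rho$; this torsor is a $Z(\rho)$-torsor over (an étale neighborhood in) $B$, hence étale-locally trivial, which furnishes the section. I would phrase this via the quotient map $\mathrm{PGL}_{n+1}\to\mathrm{PGL}_{n+1}/Z(\rho)\hookrightarrow\mathrm{Hom}(H,\mathrm{PGL}_{n+1})$ being a smooth surjection onto its image, so the classifying map $B\to\mathrm{Hom}(H,\mathrm{PGL}_{n+1})$ lifts étale-locally. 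An alternative, cleaner route avoiding representation-variety language: work with $E$ directly, averaging over $H$ to decompose $E$ étale-locally into isotypic pieces for the (lifted, after a $\mu_{n+1}$-gerbe twist which is harmless for our purposes) $H$-action, giving a canonical étale-local equivariant trivialization; I would choose whichever of these the reader finds least painful, but either way the content is the rigidity of finite-group representations plus étale-local triviality of torsors under a linear algebraic group.
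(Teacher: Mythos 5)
Your proof is correct, and it takes a genuinely different route from the paper's, although both ultimately hinge on the same rigidity principle for representations of finite groups in characteristic zero. The paper starts the same way, trivializing locally and considering the classifying data of the $H$-action, but then immediately passes to the linear group: it forms the fiber product $\mathcal{H}$ of $H\times U\to PGL\times U\leftarrow SL\times U$, observes that all fibers of $\mathcal{H}$ are isomorphic to a fixed finite extension $\tilde H$ of $H$ (the explicit version of what you call the $\mu_{n+1}$-gerbe twist), trivializes $\mathcal{H}$ after an \'etale base change, and thereby obtains a genuine family of linear $\tilde H$-representations on a trivial vector bundle $Y$. It then asserts that the representation in each fiber is a fixed $R$ (characters are locally constant), forms the constant bundle $Z$ with action $R$, and produces the conjugating isomorphism as a nonvanishing section of $\mathrm{Hom}_{\tilde H}(Z,Y)$, which it claims is a line bundle ``by Schur's lemma.'' Your main route stays entirely in $PGL_{n+1}$: you invoke the fact that connected components of $\mathrm{Hom}(H,PGL_{n+1})$ are single conjugacy orbits, and you produce the conjugating family as an \'etale-local section of the smooth orbit map $PGL_{n+1}\twoheadrightarrow PGL_{n+1}/Z(\rho)$. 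This is an abstract repackaging of the same content, and your sketched alternative route (gerbe twist plus isotypic decomposition) is essentially the paper's argument. One thing your main route buys is robustness: the paper's appeal to Schur's lemma to conclude $\mathrm{Hom}_{\tilde H}(Z,Y)$ is a \emph{line} bundle is literally valid only when $R$ is irreducible, which holds in the actual application (the relevant $\hat H$-action on $\C^d$ is essentially a Heisenberg representation) but not for the general statement of the lemma; your $Z(\rho)$-torsor argument needs no irreducibility hypothesis, since \'etale-local triviality of torsors under the centralizer holds regardless.
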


\begin{proof}
It will suffice to give local trivializations respecting the $H$ action.  Start with any local trivialization of the fiber bundle $\p^n \times U \rightarrow U$, $b \in U$.  We can consider the trivial families of algebraic groups $H \times U$, $\pgln \times U$, and $\sln \times U$.  The action of $H$ on $X|_U$ determines a map $H \times U \rightarrow \pgln \times U$, and we can take the fiber product

$
\begin{CD}
\mathcal{H} @>>> \sln \times U\\
@VVV @VVV \\
H \times U @>>> \pgln \times U
\end{CD}
$

Each fiber of $\mathcal{H}$ over $U$ will be an extension of $H$ by $\Z/n$, and since the moduli of finite groups is discrete, they will all be isomorphic to a fixed group $\tilde{H}$.  After possibly an etale base change to $V$, we can trivialize $\mathcal{H}$ to obtain

$
\begin{CD}
\tilde{H} \times V @>>> \sln \times V\\
@VVV @VVV \\
H \times V @>>> \pgln \times V
\end{CD}
$

So we have an action of $\tilde{H}$ on the bundle $Y = \C^n \times V \rightarrow V$, the representation $R$ on each fiber isomorphic because taking the character on each fiber is a continuous function on $V$.  Let $Z$ be the bundle $\C^n \times V \rightarrow V$ with $\tilde{H}$ acting on $Z$ via the representation $R$ on the first factor.  By Schur's lemma the vector bundle Hom$(Z,Y)$ is a line bundle on $V$.  Take a nonzero section, and let $W$ be the open set on which it does not vanish.  Over $W$, $Z$ gives a trivialization of $Y$ for which the $\tilde{H}$ action is via only the first factor.  Projectivizing, we obtain a trivialization of $X$ over $W$ on which $H$ acts via only the first factor, as desired.
\end{proof}

\subsubsection{Computation of the virtual Poincar\'{e} polynomial of $U_d$}

\begin{lem}
$\RjpiQ_{\widetilde{U}_d}$ is a trivial local system on $\tilMoo$.
\end{lem}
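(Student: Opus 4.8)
The plan is to show that the fundamental group of $\tilMoo$ acts trivially on the cohomology of the fiber $\pdmo/\hat{H}$, so that the local system $R^j\pi_*\Q$ is constant. First I would recall from Proposition \ref{Ud is fiber bundle} that $\widetilde{U}_d \to \tilMoo$ is a fiber bundle with fiber $F = \pdmo/\hat{H}$, and that the monodromy of the local system $R^j\pi_*\Q$ factors through the action of $\pi_1(\tilMoo)$ on $H^j(F,\Q)$ induced by the structure group of the bundle. The structure group here is built from the way $\hat{H}$ sits inside the automorphisms of the projective bundle $\widetilde{P_d} = \p(\pi_*\OO_\CC(d\sigma))$ over $\tilMoo[d]$, pushed down along the étale cover $\widetilde{\rho}:\tilMoo[d]\to\tilMoo$.

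The key point is that $H^*(F,\Q)$ is very small: by Lemma \ref{PnModGp}, since $\hat{H}$ is a finite group acting on $\pdmo$, we have $H^j(\pdmo/\hat{H},\Q) = H^j(\pdmo,\Q)^{\hat{H}}$, which is one-dimensional for $j$ even in the range $0\le j\le 2(d-1)$ and zero otherwise. So for each $j$, $H^j(F,\Q)$ is either $0$ or a one-dimensional $\Q$-vector space. Any automorphism of the bundle preserving orientation must act on a one-dimensional cohomology group generated by (a power of) an ample class by a positive scalar; but a monodromy operator on a $\Q$-local system of rank one that is also defined over $\Z$ (integral cohomology) and orientation-preserving must be the identity. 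Hence the local system $R^j\pi_*\Q$ has trivial monodromy for every $j$, i.e. it is a trivial local system on $\tilMoo$.

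More carefully, I would argue as follows. The generator of $H^{2k}(\pdmo,\Q)$ is the $k$-th power of the hyperplane class $h$; the subring $H^*(\pdmo,\Q)^{\hat{H}}$ is generated by $h$ (since $\hat{H}$ acts by projective transformations, it fixes $h$). Thus $H^*(F,\Q)\cong \Q[h]/(h^d)$, with $H^{2k}(F,\Q) = \Q\cdot h^k$. Monodromy around any loop in $\tilMoo$ acts by a ring automorphism of $H^*(F,\Q)$ preserving the integral lattice $H^{2k}(F,\Z)\cong\Z\cdot h^k$ and the Hodge/effectivity structure, hence fixes $h$ (it cannot send $h\mapsto -h$ since the bundle map is holomorphic, hence orientation-preserving, so it acts by $+1$ on the top cohomology $H^{2(d-1)}$, and a ring automorphism fixing $h^{d-1}$ up to sign and fixing the lattice fixes $h$). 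Therefore every $R^j\pi_*\Q$ is a trivial local system.

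The main obstacle I anticipate is making the reduction from "$\pi_1(\tilMoo)$ acts through bundle automorphisms" to "the action on $H^*(F)$ is trivial" fully rigorous — in particular being careful that the structure group of the descended bundle genuinely acts on $F$ through orientation-preserving self-maps (so that Lemma \ref{PnModGp}'s no-cancellation argument applies fiberwise and globally), and that one-dimensionality plus integrality forces triviality rather than merely finite order. This is really the same style of argument as in the proof of Lemma \ref{PnModGp}: holomorphicity gives orientation-preservation, which pins down the sign, and then rank one over $\Z$ leaves no room for anything but the identity.
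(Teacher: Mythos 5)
Your approach is related in spirit to the paper's: both hinge on the fact that $H^{2k}(\pdmo/\hat{H},\Q)$ is one-dimensional, spanned by (the image of) $h^k$ where $h$ is the hyperplane class, so that $R^j\pi_*\Q$ is a rank-one (or zero) local system. The paper then constructs an explicit nonvanishing flat global section by taking $\hat{H}$-orbits of linear subspaces in Zariski-local trivializations of $\tilPd$ over $\tilMood$ and gluing; you instead try to argue abstractly that the rank-one monodromy representation must be trivial, using orientation-preservation.

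The gap is in the final parenthetical: the claim that a ring automorphism of $H^*(F,\Q)\cong\Q[h]/(h^d)$ preserving the lattice and acting by $+1$ on $h^{d-1}$ must fix $h$ is false when $d$ is odd. In that case $d-1$ is even, so $h\mapsto -h$ already fixes $h^{d-1}$; orientation-preservation on the top cohomology therefore cannot distinguish $h$ from $-h$. (Compare the familiar example of an elliptic fibration: $-I\in SL_2(\Z)$ preserves orientation on $H^2$ of the fiber yet acts by $-1$ on $H^1$.) So orientation alone does not rule out the sign local system.

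The correct way to finish is precisely the ``effectivity'' remark you gesture at but do not develop, and it effectively reduces your argument to the paper's: since $\widetilde{U}_d$ is quasi-projective over $\tilMoo$, there is a class in $H^2(\widetilde{U}_d,\Q)$ (the first Chern class of a $\Q$-line bundle that is relatively ample, e.g.\ descended from an $\hat{H}\times\hat{G}$-invariant power of $\OO_{\tilPd}(1)$) whose restriction to each fiber is a nonzero multiple of $h$. Its image in $H^0(\tilMoo, R^2\pi_*\Q)$ is a flat, nowhere-vanishing section, which pins the monodromy on $R^2\pi_*\Q$ to be $+1$; its $k$-th powers then trivialize each $R^{2k}\pi_*\Q$. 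Once you make this precise your proof is complete, and it is essentially the paper's proof in monodromy language --- the paper's $\hat{H}$-orbit $[\lambda^{\hat{H}}]$ of a linear subspace is exactly such a global flat section built by hand.
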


\begin{proof}
We will show that $\RjpiQ_{\widetilde{U}_d}$ is trivial by demonstrating a nonvanishing global section.  A nonvanishing global section of $\RjpiQ_{\widetilde{U}_d} = \RjpiQ_{\tilPd/\hat{H}/\hat{G}}$ is simply a nonvanishing $\hat{H}$-invariant $\hat{G}$-equivariant global section of $\RjpiQ_{\tilPd}$.

$\tilPd$ is the projectivization of a vector bundle over $\tilMood$, hence is Zariski locally trivial.  So we can cover $\tilPd$ by open sets $\Omega$ over which there is a trivialization $\tilPd |_{\Omega} = \Omega \times \pdmo$.  For any such $\Omega$, fix a linear subspace $\Lambda \subset \pdmo$ of dimension $j$.  Under the trivialization, this gives a subvariety $\lambda := \Lambda \times \Omega$ of $\tilPd |_{\Omega}$.  The union $\lambda^{\hat{H}} := \sum_{h \in \hat{H}} h \cdot \lambda$ is $\hat{H}$-invariant, and its class $[\lambda^{\hat{H}}] \in H^j(P_d|_{\Omega})$ is $\hat{H}$-invariant and $\hat{G}$-equivariant.  Because the action of $\hat{H}$ is holomorphic on $\pdmo$, it is in particular orientation preserving and hence for each point $t \in \Omega$, the class $[\lambda^{\hat{H}}(t)]$ is a nonzero element of $H^j(\pdmo, \Q)^{\hat{H}}$.

In this way we obtain a section $s_{\Omega} \in \RjpiQ_{\tilPd} (\Omega)$ for each $\Omega$ in the cover.  For $\Omega$, $\Omega'$ two open sets of the cover, the sections $s_{\Omega}$ and $s_{\Omega'}$ agree on the intersection, so this defines a nonvanishing $\hat{H}$-invariant $\hat{G}$-equivariant global section of $\RjpiQ_{\tilPd}$, as desired.
\end{proof}

\begin{prop} \label{d>2}
$P^{vir}_{U_d}(t) = (1 + ... + t^{2d-2}) \cdot P^{vir}_{M_{1,1}}(t) = (t^2 + ... + t^{2d})$.
\end{prop}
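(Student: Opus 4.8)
The plan is to exploit the fiber bundle structure of $\widetilde{U}_d$ from Proposition \ref{Ud is fiber bundle} together with the additivity of the virtual Poincar\'e polynomial, but to stratify the base $\Moo$ rather than $U_d$ itself, since $U_d$ fails to be a fiber bundle over $\Moo$ precisely because the fibers over $\xi_4$ and $\xi_6$ degenerate.

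First I would consider the projection $\pi: U_d \rightarrow \Moo$ sending $(C,D)$ to the isomorphism class of $C$, and pull back the stratification $\Moo = \tilMoo \sqcup \{\xi_4\} \sqcup \{\xi_6\}$. By construction the preimage of $\tilMoo$ is exactly $\widetilde{U}_d$, while the preimages of $\xi_4$ and $\xi_6$ are the (closed) fibers of $\pi$ over those points, so additivity of $P^{vir}$ on locally closed strata gives $P^{vir}_{U_d}(t) = P^{vir}_{\widetilde{U}_d}(t) + P^{vir}_{\pi^{-1}(\xi_4)}(t) + P^{vir}_{\pi^{-1}(\xi_6)}(t)$. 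By Proposition \ref{Ud is fiber bundle}, $\widetilde{U}_d$ is a fiber bundle over $\tilMoo$ with fiber $\pdmo/\hat{H}$, and by the preceding lemma its local systems $\RjpiQ$ are trivial, so Lemma \ref{virpp for fiberbundles} applies and $P^{vir}_{\widetilde{U}_d}(t) = P^{vir}_{\tilMoo}(t)\cdot P^{vir}_{\pdmo/\hat{H}}(t)$. Since $\hat{H}$ acts holomorphically on $\pdmo$, Lemma \ref{PnModGp} gives $P^{vir}_{\pdmo/\hat{H}}(t) = 1 + t^2 + \cdots + t^{2d-2}$.

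Next I would identify the two remaining strata. As recorded in the construction of $\pd$, the fibers of $\pi$ over $\xi_4$ and $\xi_6$ are the further quotients of $\pdmo/\hat{H}$ by the residual $\Ztwo$ and $\Z/3$ coming from the extra automorphisms of $\xi_4$ and $\xi_6$; in particular each is a quotient of $\pdmo$ by a finite group acting holomorphically, so Lemma \ref{PnModGp} again yields virtual Poincar\'e polynomial $1 + t^2 + \cdots + t^{2d-2}$ for each. Summing the three strata,
\begin{equation*}
P^{vir}_{U_d}(t) = \left(1 + t^2 + \cdots + t^{2d-2}\right)\left(P^{vir}_{\tilMoo}(t) + 2\right).
\end{equation*}

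Finally I would pin down $P^{vir}_{\tilMoo}(t)$: the coarse space of $\Moo$ is $\A^1$, so $P^{vir}_{\Moo}(t) = t^2$ (this is also (\ref{d=1})), and additivity over $\Moo = \tilMoo \sqcup \{\xi_4\} \sqcup \{\xi_6\}$ gives $P^{vir}_{\tilMoo}(t) = t^2 - 2$, hence $P^{vir}_{\tilMoo}(t) + 2 = t^2 = P^{vir}_{\Moo}(t)$. Substituting gives $P^{vir}_{U_d}(t) = (1 + t^2 + \cdots + t^{2d-2})\cdot t^2 = t^2 + t^4 + \cdots + t^{2d}$, as claimed. I expect the only genuinely delicate point to be the bookkeeping at the two special fibers: one must check that the residual quotients there are indeed by finite groups acting holomorphically (so Lemma \ref{PnModGp} applies), and then observe that their combined contribution $2\,(1 + \cdots + t^{2d-2})$ exactly compensates the defect $-2$ of $P^{vir}_{\tilMoo}$ relative to $P^{vir}_{\Moo}$ — which is what makes the final answer collapse to the clean product $(1+\cdots+t^{2d-2})\,P^{vir}_{\Moo}(t)$.
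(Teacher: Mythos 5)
Your proposal is correct and follows essentially the same route as the paper: stratify $U_d$ by pulling back $\Moo = \tilMoo \sqcup \{\xi_4\} \sqcup \{\xi_6\}$, apply the fiber-bundle lemma and triviality of the local systems over $\tilMoo$, apply Lemma \ref{PnModGp} both there and to the two special fibers, and recombine by additivity. The only cosmetic difference is that you make the arithmetic $P^{vir}_{\tilMoo}(t) = t^2 - 2$ explicit, whereas the paper simply reassembles $P^{vir}_{\tilMoo} + P^{vir}_{\Moo\setminus\tilMoo} = P^{vir}_{\Moo}$ directly.
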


\begin{proof}
By the additivity of the virtual Poincar\'{e} polynomial on locally closed stratifications,

$$P^{vir}_{U_d} (t) = P^{vir}_{\widetilde{U_d}} (t) + P^{vir}_{U_d|_{\Moo \setminus \tilMoo}} (t).$$

Because we've shown that the local system $\RjpiQ_{\widetilde{U}_d}$ is trivial, we can apply Lemma \ref{virpp for fiberbundles} to see that $P^{vir}_{\widetilde{U_d}} (t) = P^{vir}_{{\mathbb{P}}^{d-1}/\hat{H}}(t) \cdot P^{vir}_{\tilMoo}(t)$, and by Lemma \ref{PnModGp}, this is $(1 + ... + t^{2d-2})  P^{vir}_{\tilMoo}(t)$.

Meanwhile, the complement $\Moo \setminus \tilMoo$ consists of two points $\xi_4$ and $\xi_6$, and $U_d|_{\xi_4}$ (\textit{resp. $U_d|_{\xi_6}$}) is ${\mathbb{P}}^{d-1}/\hat{H}$ further quotiented by $\Z/2$ (resp. $\Z/3$).  In any case, by Lemma \ref{PnModGp}, the virtual Poincar\'{e} polynomial of these fibers is also $(1 + ... + t^{2d-2})$, and we conclude that

$$P^{vir}_{U_d} = (1 + ... + t^{2d-2})  P^{vir}_{\tilMoo}(t) + (1 + ... + t^{2d-2})  P^{vir}_{\Moo \setminus \tilMoo}(t) = (1 + ... + t^{2d-2})  P^{vir}_{\Moo}(t)$$

as desired.
\end{proof}

Note that Proposition \ref{d>2} holds for all $d$, as the expressions for $d=1$ and $d=2$ given in Equations (\ref{d=1}) and (\ref{d=2}) match this formula.

\subsection{Boundary strata}
Now we compute the virtual Poincar\'{e} polynomial of the strata $\Deldi$.  Recall that $\Deldi$ is the moduli space parameterizing sets $D$ of $d$ unordered possibly coincident points on $C_i^{ns}$, the nonsingular locus of $C_i$.  $D$ is isomorphic to $D'$ if there is an automorphism $\phi: C_i \rightarrow C_i$ such that $\phi(D) = D'$.  Because we work with objects modulo isomorphisms, in the remainder let us consider $C_i$ to be rigidified by the requirement that the two nodes on each component be $\{0,\infty\} \in \p^1$, and one component meets the next by gluing $0$ to $\infty$.  With this data fixed, the automorphism group of $C_i$ is $(\Cstar)^i \rtimes D_i$, where $(\Cstar)^i$ acts by scaling on each component and $D_i$ acts by the standard representation on an $i$-gon in the plane.

We further stratify $\Deldi$ into substrata defined by fixing, modulo the action of $D_i$, the number of points on each component of $C_i$.  Let $\Deldid$ denote the subset of $\Deldi$ with $D_1$ points on one component, $d_2$ points on the next, and so on.  We will show that $P_{\Deldid}^{vir}(t) = t^{2(d-i)}$ and count the number of such substrata.

\begin{lem}
$P_{\Deldid}^{vir}(t) = t^{2(d-i)}$.
\end{lem}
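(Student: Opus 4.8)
The plan is to compute $P^{vir}_{\Deldid}(t)$ by exhibiting $\Deldid$ as the quotient of an affine space by a finite group and then invoking Lemma \ref{CnModGp}. Recall that $C_i$ has been rigidified so that its automorphism group is $(\Cstar)^i \rtimes D_i$, with $(\Cstar)^i$ scaling the components and $D_i$ permuting them cyclically/dihedrally. A point of $\Deldid$ is a choice, for each of the $i$ components, of an unordered multiset of $d_k$ points on that component's $\Cstar = \p^1 \setminus \{0,\infty\}$, modulo $(\Cstar)^i \rtimes D_i$. First I would fix one representative arrangement of the numbers $(d_1,\dots,d_i)$ around the $i$-gon and work with the subgroup of $\mathrm{Aut}(C_i)$ preserving this arrangement; the full stratum is covered by finitely many such configurations but they are identified by $D_i$, so it suffices to understand one.

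The next step is to identify the parameter space before quotienting by $(\Cstar)^i$. On a single component, an unordered multiset of $m$ points on $\Cstar$ is a point of $\mathrm{Sym}^m(\Cstar)$. Taking the product over components gives $\prod_{k=1}^i \mathrm{Sym}^{d_k}(\Cstar)$, which has dimension $\sum d_k = d$. I would then quotient by $(\Cstar)^i$: each $\Cstar$ factor acts by simultaneous scaling on the $d_k$ points of its component. Since $\mathrm{Sym}^{d_k}(\Cstar)$ is itself (an open subset of) affine space — coordinatized by elementary symmetric functions of the points, with the top one nonvanishing — and the $\Cstar$-action is linear with positive weights, the quotient $\mathrm{Sym}^{d_k}(\Cstar)/\Cstar$ is naturally $\A^{d_k - 1}$ (one can normalize, say, the product of the points to be $1$, leaving a finite $\Zmodd[d_k]$ ambiguity, or better, use the invariant coordinates $e_1^{d_k}/e_{d_k}, \ldots$). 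Multiplying over the $i$ components, $\left(\prod_k \mathrm{Sym}^{d_k}(\Cstar)\right)/(\Cstar)^i$ is an affine space of dimension $\sum_k (d_k - 1) = d - i$, possibly with a residual finite group action coming from the normalization choices and from the stabilizer in $D_i$ of the configuration.

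Finally, after this reduction $\Deldid$ is identified with $\C^{d-i}/\Gamma$ for a finite group $\Gamma$ acting linearly (the residual cyclic factors from normalizing each $\mathrm{Sym}^{d_k}(\Cstar)$, together with the finite part of $D_i$ fixing the configuration, all act linearly on the invariant affine coordinates). Lemma \ref{CnModGp} then gives $P^{vir}_{\Deldid}(t) = t^{2(d-i)}$ directly. I expect the main obstacle to be bookkeeping rather than anything deep: one must check that the $(\Cstar)^i$-action on $\prod_k \mathrm{Sym}^{d_k}(\Cstar)$ really does have quotient an affine space with only a \emph{linear} residual finite action — i.e., that passing to the symmetric-function coordinates and then to $(\Cstar)^i$-invariants does not introduce a nonlinear quotient singularity that falls outside the scope of Lemma \ref{CnModGp}. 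The cleanest way around this is to choose the invariant coordinates carefully (ratios of elementary symmetric functions homogenized by the top one) so that the leftover group is manifestly a finite subgroup of a torus acting diagonally, hence linearly, on $\C^{d-i}$; then the lemma applies verbatim.
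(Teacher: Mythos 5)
Your proposal matches the paper's argument: the paper also writes
\[
\Deldid \simeq \Bigl( \prod_{k=1}^i \mathrm{Sym}^{d_k}(\Cstar) \Bigr) \Big/ \bigl((\Cstar)^i \rtimes D_i\bigr),
\]
passes to symmetric-function coordinates on each $\mathrm{Sym}^{d_k}(\Cstar)\simeq \C^{d_k-1}\times\Cstar$, quotients by each $\Cstar$ to leave a residual $\Z/d_k$, and then applies Lemma \ref{CnModGp}. For the case $i=1$ the paper writes out the $(\Cstar\rtimes\Z/2)$-action explicitly in the coefficient coordinates $(b_0,\dots,b_{d-1})$; after normalizing $b_0=1$ the surviving $\Z/2$ simply reverses the order of $(b_1,\dots,b_{d-1})$, confirming linearity. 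Your concern about whether the residual finite action on $\C^{d-i}$ is linear is the right thing to check, and you resolve it in the same way the paper implicitly does (by choosing torus-weight coordinates so that the leftover group sits diagonally); the paper is terser on this point but the substance is identical.
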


\begin{proof}
The simplest case is $\Deld1$.  Here $C_1$ is a nodal $\p^1$ and there is only one substratum $$\Deld1[d] \simeq Sym^d(\p^1 \setminus \{0,\infty\})/(\Cstar \rtimes \Zmod2).$$  Under the isomorphism $Sym^d(\Cstar) \simeq \C^{d-1} \times \Cstar$ given by $\{a_1,...,a_d\} \rightarrow (b_0,...,b_{d-1})$, where $z^d + b_{d-1}z^{d-1} + ... + b_0 = (z-a_1)...(z-a_d)$, the action of $\Cstar \rtimes \Zmod2$ is

$$(\lambda, \tau)(b_0,...,b_{d-1}) =
\left\{
\begin{array}{cl}
(\lambda^d b_0, ..., \lambda b_{d-1}) \mathrm{if  } \tau = id \in \Zmod2 \\
\\
(\frac{1}{\lambda^d b_0}, \frac{b_{d-1}}{\lambda^{d-1} b_0}, ... ,\frac{b_1}{\lambda b_0}), \mathrm{if  } \tau \neq id \in \Zmod2
\end{array}
\right.$$

So

$$
\Deldi[d] \simeq Sym^d(\Cstar)/(\Cstar \rtimes \Zmod2)
=(Sym^d(\Cstar)/\Cstar)/(\Zmod2) = (\C^{d-1}/\Zmodd)/(\Zmod2)$$

and we conclude that $P_{\Deld1[d]}^{vir}(t) = t^{2(d-1)}$ by Lemma \ref{CnModGp}.

In the general case,

$$\Deldid \simeq \left( Sym^{d_1}(\Cstar) \times ... \times Sym^{d_i}(\Cstar) \right) /(\Cstar)^i \rtimes D_i$$
$$= \left( Sym^{d_1}(\Cstar)/\Cstar \times ... \times Sym^{d_i}(\Cstar)/\Cstar \right) / D_i$$
$$= \left( Sym^{d_1-1}(\Cstar)/(\Z/d_1) \times ... \times Sym^{d_i-1}(\Cstar)/(\Z/d_i) \right) / D_i.$$

Again, by Lemma \ref{CnModGp} we conclude $P_{\Deldid}^{vir}(t) = t^{2(d-i)}$.
\end{proof}

It remains now only to compute the number of substrata of $\Deldi$.  We do so by observing the following bijection between the substrata $\Deldid$ of $\Deldi$ and necklaces of $i$ black beads and $d-i$ white beads.  The substrata of $\Deldi$ are indexed by $i$-gons decorated with a choice of a number $d_j \geq 1$ for each side, $d_j$ denoting the number of points of $D$ on the corresponding component of $C_i$.  Construct a necklace from such a decorated $i$-gon by going putting one black bead at every vertex and $d_j - 1$ white beads on each edge.

\begin{figure}[b]
  \centering
  \includegraphics[width=4in]{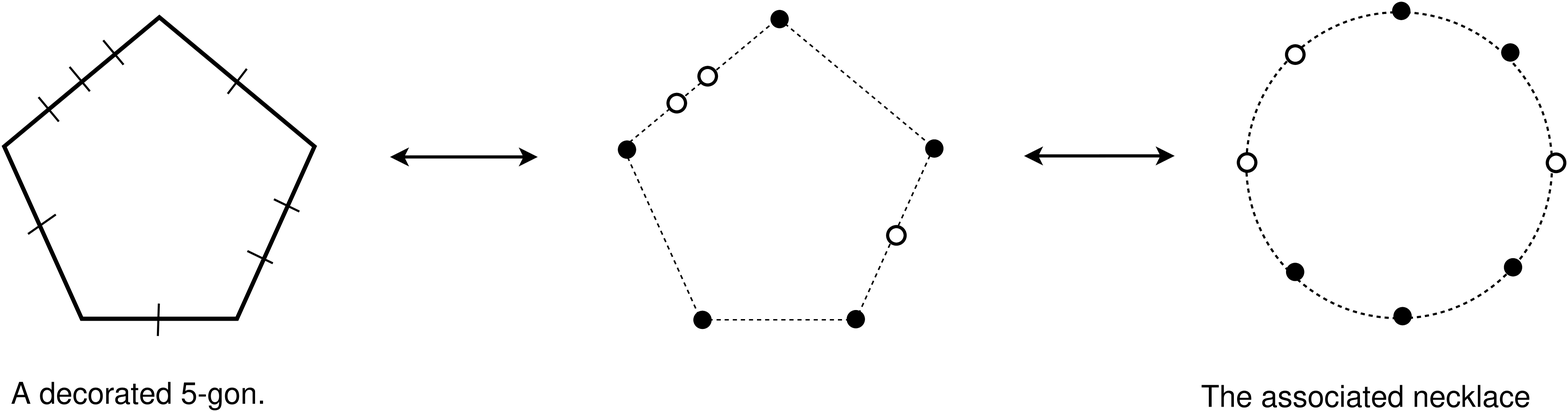}
  \caption{An example of the bijection between substrata $\Deldid$ and necklaces.}
\end{figure}

Let us denote by $\Ndi$ the number of necklaces of $i$ black beads and $d-i$ white beads.  We conclude
\begin{prop}
$P_{\coprod \Deldi}^{vir} (t) = \sum_{i = 1}^{d} \Ndi t^{2(d-i)}.$
\end{prop}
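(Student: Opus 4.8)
The plan is to combine the additivity of the virtual Poincar\'{e} polynomial on locally closed strata with the two facts already in hand: that $\Deldi$ is the disjoint union of its substrata $\Deldid$, and that each such substratum satisfies $P^{vir}_{\Deldid}(t) = t^{2(d-i)}$. Since every substratum of $\Deldi$ contributes the same monomial $t^{2(d-i)}$, additivity gives $P^{vir}_{\Deldi}(t) = m_i \cdot t^{2(d-i)}$, where $m_i$ denotes the number of substrata of $\Deldi$; summing over $i$ then yields $P^{vir}_{\coprod \Deldi}(t) = \sum_{i=1}^{d} m_i t^{2(d-i)}$. It therefore remains only to prove $m_i = \Ndi$.

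To count $m_i$, I would describe the substrata purely combinatorially. A substratum of $\Deldi$ is determined by recording, for each of the $i$ components of $C_i$, how many of the $d$ points of $D$ lie on it. Since each component of $C_i$ carries exactly two nodes, the stability condition forces at least one point on each, so this data amounts to a cyclically-ordered tuple $(d_1, \ldots, d_i)$ of integers $d_j \geq 1$ with $\sum_j d_j = d$, taken modulo the dihedral group $D_i$ arising as the quotient $\mathrm{Aut}(C_i)/(\Cstar)^i$ permuting the components. Thus $m_i$ is the number of ways of placing positive integers summing to $d$ on the edges of an $i$-gon, up to rotation and reflection.

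The key step is the bijection with necklaces already indicated in the text: send a decorated $i$-gon to the necklace obtained by putting one black bead at each of its $i$ vertices and, on the edge labelled $d_j$, a run of $d_j - 1$ white beads. This produces a circular arrangement of $i$ black beads and $\sum_j (d_j - 1) = d - i$ white beads, that is, one of the objects counted by $\Ndi$. The inverse reads off, from a necklace with $i$ black beads, the cyclic sequence of gap lengths (one more than the number of white beads between consecutive black beads), recovering the decorated $i$-gon. I would then check that this correspondence intertwines the two symmetry actions: a rotation or reflection of the $d$-bead necklace cyclically permutes or reverses the gap sequence $(d_1,\ldots,d_i)$, which is exactly the $D_i$-action on the $i$-gon, so the bijection descends to a bijection of equivalence classes and $m_i = \Ndi$.

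The main obstacle is handling the degenerate values of $i$ carefully: for $i=1$ the ``$1$-gon'' is a single loop-edge and the relevant residual symmetry is the $\Z/2$ swapping the two nodes of the nodal $\p^1$, and for $i=2$ one must confirm that the group acting on $(d_1,d_2)$ really is $D_2$ and that it matches the dihedral symmetry of the $d$-bead necklace. Once the equivariance of the bijection is pinned down uniformly across all $i$, the count $m_i = \Ndi$ follows immediately, and the stated formula is established.
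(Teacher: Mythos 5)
Your proposal follows the paper's argument exactly: use additivity to sum the per-substratum contribution $t^{2(d-i)}$ over all substrata of $\Deldi$, then identify the number of substrata with $\Ndi$ via the bijection sending a decorated $i$-gon (with $d_j$ points on the $j^{\text{th}}$ component) to a circular arrangement with one black bead per vertex and $d_j - 1$ white beads per edge. The paper states this bijection without checking $D_i$-equivariance or the small-$i$ cases, so your added care there is a reasonable sharpening rather than a different route.
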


Combining this with the results of Section \ref{openlocus},

\begin{eqnarray*}\label{PoincareOfG11}
P_{\qg1d}^{vir} (t) = (t^2 + ... + t^{2d}) + \sum_{i = 1}^{d} \Ndi t^{2(d-i)}\\
\\
= (t^2 ... + t^{2d-2}) + \sum_{i = 0}^{d} \Ndi t^{2(d-i)}\\
\\
= 1 + 2t^2 + ... + 2t^{2d-2} + t^{2d}.
\end{eqnarray*}

Note that the necklace counting problem is symmetric under the interchange of the black and white beads.  That is, $\Ndi = \Nid$.  By Poincar\'{e} Duality, we know the Poincar\'{e} polynomial of $\qg1d$ will satisfy $$P_{\qg1d}^{vir} (t) = \left( P_{\qg1d}^{vir} (t) \right)^{-1} \cdot t^{2d}.$$  In this particular case, switching black and white beads is the explicit manifestation of this expected symmetry.

The question of counting necklaces of colored beads is a classical one in combinatorics.  Such necklaces are named "Polya's necklaces" in honor of Polya who gave a formula for the generating function for $\Ndi$ (also given independently by Redfield) \cite{Polya}, \cite{Redfield}.  Using his formula we conclude

\begin{thm} \label{PoincareOfG11Formula}

\text{If $d$ is odd,}

\begin{equation*}
P_{\qg1d} (t) = P_{\qg1d}^{vir} (t) = \{
\frac{1}{2d} \left[ d (1+t^2)(1+t^4)^{\frac{d-1}{2}} + \sum_{k|d}
\phi(\frac{d}{k}) (1 + t^{\frac{2d}{k}})^{k} \right] + \sum_{j=1}^{d-1} t^{2j}
\end{equation*}

and if $d$ is even,
\begin{equation*}
P_{\qg1d} (t) = P_{\qg1d}^{vir} (t) = \frac{1}{2d} \left[ \frac{d}{2} (1+t^2)^2(1+t^4)^{\frac{d}{2}-2} +
\frac{d}{2} (1+t^4)^{\frac{d}{2}} + \sum_{k|d} \phi(\frac{d}{k}) (1 + t^{\frac{2d}{k}})^k \right] + \sum_{j=1}^{d-1} t^{2j}
\end{equation*}

\end{thm}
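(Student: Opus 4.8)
The plan is to combine two ingredients: the explicit virtual Poincaré polynomial
$P^{vir}_{\qg1d}(t) = 1 + 2t^2 + \dots + 2t^{2d-2} + t^{2d}$ computed above, together with Polya's classical formula for the generating function of the necklace numbers $\Ndi$. Since $\qg1d$ is a smooth complete orbifold (a quotient of a Hassett space by $S_d$), the first bulleted property of the virtual Poincaré polynomial gives $P_{\qg1d}(t) = P^{vir}_{\qg1d}(t)$, so it suffices to rewrite the right-hand side of the display in terms of the $\Ndi$ and then substitute Polya's formula.

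First I would recall that the number of necklaces on $d$ beads in two colors, with $i$ black and $d-i$ white, has generating function obtained from Burnside/Polya counting for the cyclic group $\Z/d$ acting on $d$-bead strings. Concretely, $\sum_{i=0}^d \Ndi x^i = \frac{1}{d}\sum_{k \mid d} \phi(d/k)\,(1 + x^{d/k \cdot \text{(appropriate)}})$—more precisely $\sum_{i} \Ndi x^i = \frac{1}{d}\sum_{k \mid d}\phi(d/k)(1+x^{d/k})^{k}$ after reindexing $k = d/\gcd$. Wait: here the necklace count in the paper is really for the \emph{dihedral} group (the automorphism group of $C_i$ includes the $D_i$ flip), so I must use the dihedral Polya formula, which adds the reflection terms. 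The cyclic part contributes $\frac{1}{2d}\sum_{k\mid d}\phi(d/k)(1+x^{d/k})^k$ (note the extra factor $\tfrac12$ from $|D_d| = 2d$), and the reflection part contributes, depending on the parity of $d$: for $d$ odd, $\frac{1}{2}(1+x)(1+x^2)^{(d-1)/2}$; for $d$ even, $\frac{1}{4}\big[(1+x)^2(1+x^2)^{d/2-1} + (1+x^2)^{d/2}\big]$. Multiplying these through by the normalization and setting $x = t^2$ (since each bead contributes degree $2$, as $\Deldi$ contributes $\Ndi t^{2(d-i)}$, i.e. a white bead has degree $2$ and a black bead degree $0$) yields exactly the bracketed expressions in the theorem.

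Then I would assemble the final formula: from the computation $P^{vir}_{\qg1d}(t) = (t^2 + \dots + t^{2d}) + \sum_{i=1}^d \Ndi t^{2(d-i)}$, and recognizing $\sum_{j=1}^{d-1} t^{2j}$ as the contribution from the main stratum after one cancellation with $\Ndi$ at $i = d$ (which gives $t^0 = 1$) and at $i=0$ (the $\Ndi[0]=1$ term giving $t^{2d}$, not present in the original range but supplied by the main stratum's top term), I would write
\[
P_{\qg1d}(t) = \sum_{i=0}^d \Ndi t^{2(d-i)} + \sum_{j=1}^{d-1} t^{2j},
\]
and substitute the generating-function identity with $x = t^2$ but with the substitution reversed ($t^{2(d-i)}$ means replacing $x^i$ by $t^{2(d-i)}$, i.e. applying $x \mapsto t^{-2}$ and multiplying by $t^{2d}$; by the black/white symmetry $\Ndi = \Nid$ noted in the text this is harmless and returns the same polynomial in $t^2$). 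This produces precisely the two displayed formulas.

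The main obstacle is bookkeeping rather than conceptual: getting the dihedral Polya formula's reflection terms exactly right in each parity, and correctly tracking the index shift so that the "extra" $\sum_{j=1}^{d-1}t^{2j}$ term (coming from the main stratum $U_d$, which contributes $t^2 + \dots + t^{2d}$, overlapping the necklace sum in degrees $2$ through $2d-2$ to double those coefficients) lands with the stated coefficient $1$ and the endpoints $1$ and $t^{2d}$ appear with coefficient $1$. I would verify the final formula against the known answer $P_{\qg1d}(t) = 1 + 2t^2 + \dots + 2t^{2d-2} + t^{2d}$ for small $d$ (say $d = 2, 3, 4$) as a consistency check; in particular for $d$ even the $(1+t^4)^{d/2-2}$ factor requires $d \geq 4$, and the case $d=2$ must be checked separately against Equation (\ref{d=2}).
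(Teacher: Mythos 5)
Your approach is essentially the paper's: compute $P^{vir}_{\qg1d}(t)$ stratum by stratum, identify the substrata of $\Delta_d^i$ with two-color necklaces, and then substitute Polya's dihedral enumeration formula in the variable $x = t^2$. That is exactly what the paper does, and your derivation of the dihedral cycle index contributions (rotations giving $\frac{1}{2d}\sum_{k|d}\phi(d/k)(1+x^{d/k})^k$ after reindexing, and reflections split by parity) is correct. You have also correctly observed that the paper's claim needs the dihedral, not merely the cyclic, Polya formula, since $\mathrm{Aut}(C_i)$ contains the full dihedral group.

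However, you should resolve, rather than paper over, the discrepancy your own computation uncovers. You correctly derive the reflection contribution $(1+x)^2(1+x^2)^{d/2-1}$ for $d$ even, i.e. $(1+t^2)^2(1+t^4)^{d/2-1}$; the exponent $\frac{d}{2}-2$ appearing in the statement is a typo. Your remark that ``the $(1+t^4)^{d/2-2}$ factor requires $d\ge 4$'' dodges this: a correct Polya formula must specialize to $(1+t^4)^0$ when $d=2$ so that the whole expression equals $1+2t^2+t^4$ as in Equation (\ref{d=2}), and indeed your $d/2-1$ does this while $d/2-2$ does not. A clean consistency check at $d=4$ settles it: $N_2^2 = 2$ (the necklaces $BBWW$ and $BWBW$), so $\sum_{i=0}^{4}\Ndi t^{2(4-i)} = 1 + t^2 + 2t^4 + t^6 + t^8$, and adding the main-stratum contribution $t^2+t^4+t^6+t^8$ gives $P_{\overline{Q}_1(G(1,1),4)}(t)=1+2t^2+3t^4+2t^6+t^8$, matching your formula with exponent $d/2-1$ and not the printed $d/2-2$.

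This same check exposes a second issue: you propose to validate your result against the display $P^{vir}_{\qg1d}(t) = 1 + 2t^2 + \cdots + 2t^{2d-2} + t^{2d}$, treating the interior coefficients as literally equal to $2$. That reading is only correct for $d\le 3$. For general $d$, the coefficient of $t^{2j}$ with $1\le j\le d-1$ is $1 + N_j^{d-j}$, and $N_j^{d-j}\ge 2$ whenever there is more than one necklace (e.g., $N_2^2 = 2$ makes the coefficient $3$ at $d=4$). So the ellipsis in that line cannot be read as ``all interior coefficients equal $2$,'' and using it as a benchmark will wrongly flag your (correct) answer as inconsistent. The reliable benchmark is the direct necklace computation $\bigl(t^2+\cdots+t^{2d}\bigr) + \sum_{i=1}^{d}\Ndi t^{2(d-i)}$, and with the corrected exponent your Polya-substitution argument matches it.
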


\section{Topology of $\qpnd$} \label{sec:topn}
\subsection{Introduction to the Bia{\l}ynicki-Birula stratification}
Let $X$ be a smooth projective variety with a $\Cstar$ action.  The fixed locus $X^{\Cstar}$ is smooth and hence the disjoint union of irreducible components $X_1 \bigcup ... \bigcup X_r$ \cite{Iversen}.  Bia{\l}ynicki-Birula \cite{BB1} gave a natural stratification of $X$ indexed by fixed loci $X_i$, namely the cell $X_i^+$ associated to $X_i$ is defined as

$$X_i^+ = \{x \in X \hspace{2mm} | \hspace{2mm} \underset{\lambda \rightarrow 0}{lim} \lambda x \in X_i \}.$$

An alternate description is by considering the normal bundle $N_i$ to $X_i$.  $N_i$ decomposes into a direct sum $$N_i = \bigoplus_{m \in \Z} N_i(m)$$ where $N_i(m)$ is the subbundle of $N_i$ composed of semi-invariants of weight $m$.  Then $X_i^+$ is isomorphic to the total space of the subbundle $\oplus_{m>0} N_i(m)$.  Let $d_i(m) = \text{dim} N_i(m)$, $d_i^+ = \sum_{m>0} d_i(m)$.

\begin{thm} \label{BBthm} (\cite{BB1} Theorem 1)
If $X$ is a smooth projective variety with a $\Cstar$ action, then
$$P_X(t) = \sum_{i=1}^{r} P_{X_i}(t) t^{2d_i^+}.$$
\end{thm}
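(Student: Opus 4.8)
The plan is to establish the Bia\l ynicki-Birula decomposition of $X$ into locally closed strata and then feed it into the additivity and multiplicativity properties of the virtual Poincar\'e polynomial recorded above.

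First I would construct the decomposition. The key geometric input is that, $X$ being proper, for every $x \in X$ the orbit map $\Cstar \to X$, $\lambda \mapsto \lambda x$, extends uniquely (by separatedness) to a morphism $\A^1 \to X$; its value at $0$ is a $\Cstar$-fixed point, hence lies in exactly one component $X_i$ of $X^{\Cstar}$. This partitions $X = \coprod_i X_i^+$ set-theoretically. To identify the structure of each piece I would work \'etale-locally near $X_i$: by the equivariant local structure of a $\Cstar$-action along a fixed locus, a neighborhood of $X_i$ is equivariantly isomorphic to a neighborhood of the zero section in the normal bundle $N_i = \bigoplus_m N_i(m)$, with $\Cstar$ acting by weight $m$ on $N_i(m)$. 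In this model the points flowing into $X_i$ as $\lambda \to 0$ are exactly those in the subbundle $\bigoplus_{m>0} N_i(m)$, so $X_i^+$ is smooth and locally closed, and globally (after checking the local models glue) is isomorphic to the total space of $\bigoplus_{m>0} N_i(m)$, a Zariski-locally trivial affine bundle of rank $d_i^+$ over $X_i$.

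With the stratification $X = \coprod_i X_i^+$ in hand, the rest is formal. Additivity of the virtual Poincar\'e polynomial on locally closed strata gives $P^{vir}_X(t) = \sum_i P^{vir}_{X_i^+}(t)$. Since $X_i^+ \to X_i$ is a Zariski-locally trivial affine bundle with fiber $\A^{d_i^+}$, its higher direct image local systems $R^j\pi_!\Q$ are constant, so Lemma \ref{virpp for fiberbundles} together with $P^{vir}_{\A^{d}}(t) = t^{2d}$ yields $P^{vir}_{X_i^+}(t) = t^{2 d_i^+}\, P^{vir}_{X_i}(t)$. Finally, $X$ is smooth and projective, so $P^{vir}_X = P_X$, and each fixed component $X_i$ is smooth and projective, so $P^{vir}_{X_i} = P_{X_i}$; hence $P_X(t) = \sum_{i=1}^r P_{X_i}(t)\, t^{2 d_i^+}$.

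The main obstacle is the first step: proving that $X_i^+$ really is a locally closed subvariety with the asserted affine-bundle structure over $X_i$ --- this is the substantive content of the Bia\l ynicki-Birula theorem. Properness of $X$ is what guarantees the limits $\lim_{\lambda\to 0}\lambda x$ exist; the linearization of the action along the fixed locus is what identifies the attracting set with the positive-weight part of the normal bundle; and one must verify that these local descriptions patch into a global affine bundle. Once that structural statement is granted, the passage to Poincar\'e polynomials is immediate from the properties of $P^{vir}$ collected above. (Alternatively one can bypass the algebraic decomposition and argue Morse-theoretically: choose a $\Cstar$-linearized ample line bundle, restrict the action to a compatible $S^1$, and note that the resulting moment map is a $\Q$-perfect Morse--Bott function whose critical manifolds are the $X_i$ with index $2 d_i^+$; Morse--Bott theory then gives the same identity.)
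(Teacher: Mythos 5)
The paper does not prove this statement --- it is cited as Theorem 1 of \cite{BB1} and used, together with its orbifold refinement Theorem \ref{BBforstacks}, as an external input --- so there is no in-paper argument to compare against. Your sketch is nonetheless a correct route: once the Bia\l ynicki-Birula structure theorem is granted (that $X = \coprod_i X_i^+$ is a finite stratification by locally closed subvarieties, each $X_i^+ \to X_i$ an \'etale- (in fact Zariski-) locally trivial affine bundle of rank $d_i^+$), the passage to the identity on Poincar\'e polynomials via additivity of $P^{vir}$ on locally closed strata, multiplicativity over fibrations with constant local systems (Lemma \ref{virpp for fiberbundles}), $P^{vir}_{\A^d}(t)=t^{2d}$, and $P^{vir}=P$ on smooth complete varieties is exactly the formalism the paper sets up in Section 2.1, and you apply it correctly. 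You rightly flag that the substantive content is the structural decomposition itself, which you outline (properness gives existence of $\lim_{\lambda\to 0}\lambda x$; linearization along $X_i$ identifies the attracting set with $\bigoplus_{m>0}N_i(m)$) but defer; that is precisely what \cite{BB1} supplies, and the Zariski-local triviality of $X_i^+\to X_i$ in particular is a nontrivial global statement beyond the \'etale-local normal form, though for the Poincar\'e-polynomial computation \'etale-local triviality (hence constancy of $R^j\pi_!\Q$) already suffices. Your parenthetical Morse-theoretic alternative --- the moment map of a compatible $S^1$ is a $\Q$-perfect Morse--Bott function with critical manifolds $X_i$ of index $2d_i^+$ --- is also standard (Frankel, Atiyah--Bott, Kirwan) and trades the BB structure theorem for perfectness of the moment map.
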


It is natural to extend this result to singular varieties \cite{CarrellGoresky}, and in fact Theorem \ref{BBthm} can be extended to the case that $X$ is a smooth Deligne-Mumford stack, giving a tool for computing the Betti numbers of such stacks.  Specifically,

\begin{thm} \label{BBforstacks} (\cite{Fontanari} Proposition 1.)
Let $X$ be a smooth projective orbifold with a $\Cstar$-action, let $F$ be the fixed locus and let $F_i$ denote its connected components.  Then

(i) $X$ is the disjoint union of locally closed subvarieties $S_i$ such that every $S_i$ retracts onto the corresponding $F_i$ and

$$\overline{S}_i \subseteq \bigcup_{j \geq i} S_j$$

(ii) the Betti numbers of $X$ are

$$h^m(X) = \sum_i h^{m-2n_i}(F_i)$$

where $n_i$ is the codimension of $S_i$.
\end{thm}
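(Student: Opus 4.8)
The plan is to run the classical Bia{\l}ynicki--Birula argument, but on the stack $X$ rather than on its coarse space. Theorem \ref{BBthm} is stated for smooth projective \emph{varieties}, and although $H^*(X,\Q)\cong H^*(|X|,\Q)$ for the coarse moduli space $|X|$, that coarse space is projective but in general singular, so it cannot be fed directly into Theorem \ref{BBthm}. Instead I would build the attracting stratification directly on $X$ and then show that the long exact sequences in compactly supported cohomology attached to it degenerate.

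First I would set up the stratification. Since $X$ is smooth, the fixed substack $X^{\Cstar}$ is smooth, hence a disjoint union of smooth proper connected components $F_1,\dots,F_r$. Working \'etale-locally one has $X\simeq[V/\Gamma]$ with $\Gamma$ finite and $V$ a smooth affine $\Cstar$-variety (one can arrange a $\Cstar$-equivariant chart because $\Cstar$ is linearly reductive), and on $V$ Bia{\l}ynicki--Birula's construction applies; it is canonical, so the attracting loci glue to a locally closed substack $S_i=\{x:\lim_{\lambda\to0}\lambda\cdot x\in F_i\}$. The normal bundle $N_i$ to $F_i$ splits globally into $\Cstar$-weight subbundles $N_i=\bigoplus_m N_i(m)$, and $S_i$ is the total space of the vector bundle $\bigoplus_{m>0}N_i(m)$ on $F_i$; in particular each $S_i$ is smooth and the flow $x\mapsto\lim_{\lambda\to0}\lambda\cdot x$ exhibits $S_i$ as a vector bundle over $F_i$, which gives both a deformation retraction $S_i\to F_i$ and the closure relation $\overline{S}_i\subseteq\bigcup_{j\ge i}S_j$ once the $F_i$ are ordered by the value of a moment map for a subcircle $S^1\subset\Cstar$. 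This is part (i).

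For part (ii) I would argue by descending induction on the filtration by the closed substacks $Z_{\ge i}=\bigcup_{j\ge i}S_j$, with $S_i$ open in $Z_{\ge i}$. Let $n_i$ be the rank of the bundle $S_i\to F_i$ (with the opposite orientation convention for the flow this is the codimension of $S_i$, matching the normalization of Theorem \ref{BBthm}). The Thom isomorphism gives $H^k_c(S_i,\Q)\cong H^{k-2n_i}_c(F_i,\Q)=H^{k-2n_i}(F_i,\Q)$, which is a \emph{pure} Hodge structure of weight $k$ because $F_i$ is smooth and proper; here one uses the mixed Hodge structures on the compactly supported cohomology of Deligne--Mumford stacks. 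In the long exact sequence $\cdots\to H^k_c(S_i)\to H^k_c(Z_{\ge i})\to H^k_c(Z_{\ge i+1})\xrightarrow{\partial} H^{k+1}_c(S_i)\to\cdots$ the group $H^k_c(Z_{\ge i+1})$ is, inductively, a sum of pieces $H^{k-2n_j}(F_j)$ with $j>i$, hence pure of weight $k$, while $H^{k+1}_c(S_i)$ is pure of weight $k+1$; a morphism of mixed Hodge structures between them is zero, so $\partial=0$. Hence $H^k_c(Z_{\ge i},\Q)\cong H^k_c(S_i,\Q)\oplus H^k_c(Z_{\ge i+1},\Q)$ for all $i$, and summing up, together with $H^k_c(X,\Q)=H^k(X,\Q)$ since $X$ is proper, gives $h^m(X)=\sum_i h^{m-2n_i}(F_i)$.

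The hardest part will be the two stacky inputs: (a) establishing the Bia{\l}ynicki--Birula decomposition and the vector-bundle structure of its strata for Deligne--Mumford stacks, which comes down to a $\Cstar$-equivariant \'etale-slice argument together with checking that the local pictures glue canonically; and (b) having mixed Hodge structures on $H^*_c$ of orbifolds that are functorial enough for the purity/degeneration step. If one prefers to avoid (b), there is a transcendental alternative: the moment map of the induced $S^1$-action is a Morse--Bott function on the orbifold $X$ with critical manifolds the $F_i$ and all Morse indices even (equal to $2n_i$), and Morse--Bott theory on orbifolds then makes it a perfect function, which is exactly (ii).
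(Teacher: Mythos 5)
The paper does not prove this result; Theorem \ref{BBforstacks} is quoted verbatim from Fontanari's Proposition~1 and used as a black box, so there is no internal proof to compare against. Your outline is the standard Bia{\l}ynicki--Birula argument transported to stacks, and the skeleton is sound: identify $S_i$ with the total space of the positive-weight subbundle of the normal bundle to $F_i$, use the Thom isomorphism to compute $H^*_c(S_i)$, and kill the connecting maps in the filtration by the $Z_{\geq i}$ via strictness of the weight filtration, since $H^k_c(S_i)$ is pure of weight $k$ once $F_i$ is smooth and proper.

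Two corrections, though. First, the normalization: your argument produces $h^m(X)=\sum_i h^{m-2r_i}(F_i)$ with $r_i$ the rank of $S_i\to F_i$, i.e.\ the rank of the positive-weight subbundle, which is generally \emph{not} the codimension of $S_i$ (that codimension equals the rank of the negative-weight part). Your parenthetical claiming that the codimension of $S_i$ is ``the normalization of Theorem \ref{BBthm}'' is wrong --- Theorem \ref{BBthm} also uses $d_i^+$, the positive-weight rank, as the exponent. The codimension formula in the statement is nonetheless true: run your argument verbatim for the inverse action $\lambda\mapsto\lambda^{-1}$, whose attracting strata are the repelling cells, and the positive-weight rank for those equals the codimension of the original $S_i$; equivalently, apply Poincar\'e duality to $X$ and to each $F_i$, which interchanges the two formulas. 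You should say one of these explicitly rather than gesture at ``opposite orientation convention.'' Second, the two ``stacky inputs'' you flag are not peripheral: an algebraic Bia{\l}ynicki--Birula decomposition for smooth proper Deligne--Mumford stacks, with strata that are genuinely vector bundles over the fixed components, is precisely the substance of this proposition, and the paper cites a then-in-preparation work of Skowera for exactly that. An \'etale-local-gluing sentence does not discharge the obligation --- checking that the attracting map is algebraic and that the local charts glue to a vector bundle over $F_i$ is where the work is, and the mixed Hodge structures on $H^*_c$ of Deligne--Mumford stacks compatible with the Thom isomorphism need a citation as well. So the proposal is a correct roadmap, but those two points are the actual proof.
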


Note that for all Deligne-Mumford stacks with a coarse moduli scheme, the cohomology groups with $\Q$ coefficients of the stack and the scheme are equal.

\subsection{$\Cstar$ fixed loci of $\qpnd$} \label{sec:FixedLoci}
To use Theorem \ref{BBforstacks} to compute the Poincar\'{e} polynomial of $\qpnd$, we begin by fixing an action of $\Cstar$ on $\qpnd$ and describing the fixed loci of that action.  Fix a general $(w_o,...,w_{n-1}) \in \Z^n$ (in particular such that $w_i$ have no common factors) satisfying $w_0 < w_1 < ... < w_{n-1}$.  Let $\Cstar$ act on $\C^n$ via $$\lambda \cdot (x_0, ... , x_{n-1}) = (\lambda^{w_0} x_0, ... ,\lambda^{w_{n-1}} x_{n-1}).$$  This induces an action on $\qpnd$ by taking this action on $\C^n \otimes \OO_C$ of each stable quotient.

The fixed loci for this action on any stable quotient space $\mathrm{\overline{Q}}_{g}(Gr(k,n),d)$ are described in \cite{MOP}.  We record their result in the genus 1 rank 1 case here, with simplified notation.  The fixed loci of $\qgnd$ come in two types.

Type A.

There are $n$ fixed loci isomorphic to $\qg1d$.  They are $X_{\ell} := i_{\ell}(\qg1d)$ for $0 \leq \ell \leq n-1$, where the inclusion $i_{\ell}: \qg1d \rightarrow \qpnd$ is defined by

\begin{equation} \label{qg1d in qgnd}
i_{\ell} : \left( C, 0 \rightarrow S \rightarrow \OO \rightarrow Q \rightarrow 0 \right) \rightarrow \left( C, 0 \rightarrow S \rightarrow \begin{array}{l}
\OO^{\ell} \\
\oplus \\
\OO \\
\oplus \\
\OO^{n-\ell-1} \end{array} \rightarrow
\begin{array}{l}
\OO^{\ell} \\
\oplus\\
Q \\
\oplus\\
\OO^{n-\ell-1} \end{array} \rightarrow 0 \right).
\end{equation}

The normal bundle $N_{\ell}$ to $X_{\ell}$ is $Hom(S, \OO^{\ell} \oplus \OO^{n-\ell-1})$ and the subbundle of positive weight is $N_{\ell}^+ = Hom(S,\OO^{\ell})$, with dimension

\begin{equation} \label{typeAweights}
d_{\ell}^+ = d \ell.
\end{equation}

Type B.

These fixed loci are indexed by the following decorated graphs:

\begin{itemize}

\item{$\Gamma$: an $m$-cycle.  Let the vertices be labelled $\{v_i | 1 \leq i \leq m\}$ and the edges $\{e_i | 1 \leq i \leq m\}$ where $e_i$ is the edge from $v_i$ to $v_{i+1}$.}

\item{$\nu : \{ \text{vertices} \} \rightarrow \{ \Cstar \text{ fixed points in } \pnmo \}$}

\item{$s : \{ \text{vertices} \} \rightarrow \mathbb{N} \cup \{0\}$}

\item{$\delta : \{ \text{edges} \} \rightarrow \{ \text{positive integers} \}$}

\end{itemize}

subject to the constraints

\begin{equation}
\nu(v_i) \neq \nu(v_{i+1})
\end{equation}

with the indices taken modulo $m$, i.e. $\nu(v_m) \neq \nu(v_1)$, and

\begin{equation}
\sum_i s(v_i) + \delta(e_i) = d.
\end{equation}

To fix notation, name the $\Cstar$ fixed points in $\pnmo$ $p_0, ..., p_{n-1}$ where $p_j = [0:...:1:...:0]$ with all but the $j^{th}$ coordinate equal to 0. If $\nu(v_i) = p_j$, we will write $\nu_i = j$.  Similarly, let $s_i$ denote $s(v_i)$ and $\delta_i$ denote $\delta(e_i)$.  The value of $s$ on a vertex will record the amount of torsion on the curve associated to that vertex and the value of $\delta$ on an edge will be the covering number for the map from the associated $\p^1$.

\begin{figure}[t]
  \centering
  \includegraphics[width=2in]{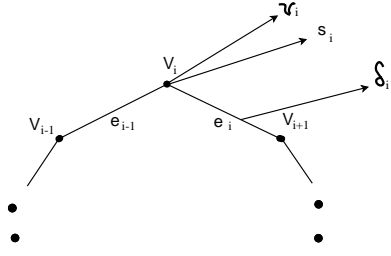}
  \caption{$\Gamma$ and its decorations}
\end{figure}

Fix such a decorated graph $\Gamma$.  Let $\mu$ be the number of vertices for which $s_i > 0$.  The points of the associated fixed locus $X_{\Gamma}$ are certain stable quotients on $C \simeq C_{m + \mu}$, the cycle of $m + \mu$ rational curves.  $C$ consists of components $\{ C_{v_i} \}$ for $i$ such that $s_i > 0$ and $\{ C_{e_i} \}$ for all $i$, glued by the graph incidences.  Let $C' \simeq C_{m}$ be the $m$-cycle of rational curves obtained by gluing only the components $\{ C_{e_i} \}$ along the graph incidences.

$X_{\Gamma}$ is isomorphic to $\displaystyle{\prod_{i | s_i > 0} \overline{M}_{0,2|s_i}} / Aut(\Gamma)$.  The isomorphism $\Psi: \displaystyle{\prod_{i | s_i > 0} \overline{M}_{0,2|s_i}} / Aut(\Gamma) \rightarrow X_{\Gamma}$ is given as follows.  Fix a point $\zeta \in \displaystyle{\prod_{i | s_i > 0} \overline{M}_{0,2|s_i}} / Aut(\Gamma)$.  For each $i$ such that $s_i>0$, $\zeta$ specifies marked points $\{p_1, p_2\} \bigcup \{ q_{1}...q_{s_i} \}$ on $C_{v_i} \simeq \p^1$.  The stable quotient on the component $C_{v_i}$ is $$0 \rightarrow \OO_{\Cvi} (-q_1 ... -q_{s_i}) \rightarrow \C^n \otimes \OO_{\Cvi} \rightarrow Q \rightarrow 0$$ where the inclusion is into the factor of $\C^n \otimes \OO_{\Cvi}$ determined by the $\Cstar$-fixed point of $\pnmo$ associated to $v_i$.  Namely, if $\nu(v_i) = p_j$ then $\OO_{\Cvi}(-q_1...-q_{s_i})$ injects into the $j^{\text{th}}$ copy of $\OO_{\Cvi}$.

The stable quotient on the component $C_{e_i}$ is obtained as follows.  Let $L \simeq \p^1$ be the line in $\pnmo$ between $\nu(v_i)$ and $\nu(v_{i+1})$.  Let
$$f_i: C_{e_i} \simeq \p^1 \rightarrow L$$
be the map of degree $\delta(e_i)$ ramified over completely over $\nu(v_i)$ and $\nu(v_{i+1})$.  The stable quotient on $C_{e_i}$ is obtained by pulling back the tautological sequence of $\pnmo$ along $f_i$.

By construction the stable quotients on the components $\Cvi$ and $\Cei$ are compatible so they can be glued to obtain a stable quotient on $C$, and this is the $\Cstar$-fixed point $\Psi(\zeta)$.

\subsection{Bia{\l}ynicki-Birula Cells}
To use the above $\Cstar$ action to compute the Poincar\`{e} polynomial of $\qpnd$, we must compute the fiber dimension of each cell.  To compute the fiber dimension $d_{\Gamma}^+$ of the Bia{\l}ynicki-Birula affine bundle over $X_{\Gamma}$, it suffices to take any point $x \in X_{\Gamma}$ and compute the weights of the $\Cstar$ action on the normal bundle $N_{\Gamma}$ to $X_{\Gamma}$ at $x$.  $d_{\Gamma}^+$ is the number of positive weights of this representation.

First consider the case that all $s_i=0$ in the decorated graph $\Gamma$.  Then the associated fixed locus is a point and the associated stable quotient is the pullback of the tautological sequence of $\pnmo$ along the map $f: C \rightarrow \pnmo$ described above, where $C \simeq C_m$ with components $\{\Cei\}$. In this case the deformation theory is just that for stable maps, as described in \cite{redbook}.  The normal bundle in this case is just the tangent space to the point, which in K-theory is
$$-Aut(C) + \bigoplus_i H^0(f_i^* T_{\pnmo}) - \bigoplus_i H^0((f^*T_{\pnmo})|v_i) + Def(C).$$
Note in this case the curves $C = C'$ are equal.

In the general case, Marian-Oprea-Pandharipande compute that for each nonzero $s_i$ the tangent space has an additional summand of
$$\oplus_{j \neq \nu(i)} H^0(\OO_C(\sigma_i)|\sigma_i) \otimes [w_j - w_{\nu(i)}]$$
where $\sigma_i \subset \Cvi$ is the divisor of torsion points corresponding to $s_i$ and $j$ runs from $0$ to $n-1$.  $H^0(\OO_C(\sigma_i)|\sigma_i)$ carries the trivial $\Cstar$ action and $[w_j - w_{\nu(i)}]$ denotes the vector space $\C$ with the representation $\lambda \cdot v = \lambda^{w_j - w_{\nu(i)}} v$ \cite{MOP}.

We conclude that in K-theory, the fiber of $N_{\Gamma}$ at $x$ is

\begin{equation}
-Aut(C') + \bigoplus_i H^0(f_i^* T_{\pnmo}) - \bigoplus_i H^0((f^*T_{\pnmo})|v_i) + Def(C) + \bigoplus_i \bigoplus_{j \neq \nu(i)} H^0(\OO_C(\sigma_i)|\sigma_i) \otimes [w_j - w_{\nu(i)}].
\end{equation}

Arbitrary subsums of this expression in K-theory will not necessarily correspond to vector spaces, but the following decomposition does correspond to a decomposition into three vector spaces: $N_{\Gamma}|_x = N_1 \bigoplus N_2 \bigoplus N_3,$ where

\begin{equation}
\begin{array}{l}
N_1 = \bigoplus_i N_1(i) = \bigoplus_i [ H^0(f_i^* T_{\pnmo}) - H^0((f^*T_{\pnmo})|v_{i+1}) - Aut(\Cei) ]\\
N_2 = \bigoplus_i N_2(i) = Def(C)\\
N_3 = \bigoplus_i N_3(i) = \bigoplus_i \bigoplus_{j \neq i} H^0(\OO_C(\sigma_i)|\sigma_i) \otimes [w_j - w_i].
\end{array}
\end{equation}

The $\Cstar$ weights of $N_1(i)$ are

\begin{equation} \label{N1}
\left\{
\begin{array}{c c}
\frac{c}{\delta_i} (w_{\nu_{i}} - w_{\nu_{i+1}}) & \quad \text{ for all } \begin{array}{l} -\delta_i \leq c \leq -1 \\ 1 \leq c \leq \delta_i -1 \end{array}\\
\\
\text{and}\\
\\
w_j - (\frac{\delta_i-c}{\delta_i} w_{\nu_i} + \frac{c}{\delta_i} w_{\nu_{i+1}}) & \quad \text{ for all } \begin{array}{l}j \neq \nu_i \text{ or } \nu_{i-1}\\ 0 \leq c \leq \delta_i -1.\end{array}
\end{array}
\right\}
\end{equation}

The vector space $N_2(i)$ is the deformation of $C$ given by smoothing the node $v_i$ if $s_i=0$ or smoothing the two nodes on $\Cvi$ if $s_i \neq 0$. The $\Cstar$ weights of $N_2(i)$ are thus

\begin{equation} \label{N2}
\left\{
\begin{array}{c l}
w_{\nu_{i+1}} + w_{\nu_{i-1}} - 2 w_{\nu_i} & \quad \text{ if } s_i = 0 \\
\\
\text{or}\\
\\
w_{\nu_{i+1}} - w_{\nu_i} \text{ and } w_{\nu_{i-1}} - w_{\nu_i} & \quad \text{ if } s_i \neq 0.
\end{array}
\right\}
\end{equation}

Finally, the $\Cstar$ weights of $N_3(i)$ are easiest to calculate, and are

\begin{equation} \label{N3}
\left\{
\begin{array}{c l}
w_j-w_i \text{ with multiplicity } s_i,  & \quad \text{ for all } j \neq i.
\end{array}
\right\}
\end{equation}

So the set of all weights of the normal bundle to $X_{\Gamma}$ are given by the expressions (\ref{N1}), (\ref{N2}), (\ref{N3}) as $i$ runs from 1 to $m$.  The dimension of the fibers of the affine bundle $X_{\Gamma}^+$ over $X_{\Gamma}$ is $d_{\Gamma}^+$, the number of positive weights.

\subsection{Computation of Betti numbers of $\qpnd$}
Marian-Oprea-Pandharipande proved that $\qpnd$ is a smooth Deligne-Mumford stack whose coarse moduli space is a scheme \cite{MOP}, thus we can apply Theorem \ref{BBforstacks} to compute the Betti numbers of $\qpnd$ by the formula $$P_X(t) = \sum_{i=1}^{r} P_{X_i}(t) t^{2d_i^+}.$$

For small $n,d$, we wrote down all the fixed loci and with the aid of some Mathematica script computed $d_{\Gamma}^+$ for all of them.  (For $n=d=2$, there was 1 fixed locus of type B and 4 weights to compute for that locus.  For $n=d=4$, there were $\sim 100$ fixed loci of type B and $\sim 1600$ weights.)  The result is:

\begin{equation}
\begin{array}{ccc}
P_{\mathrm{\overline{Q}}_{1}(\mathbb{P}^1,2)}(t) = 1 + 2t^2 + 3t^4 + 2t^6 + t^8\\
\\
P_{\mathrm{\overline{Q}}_{1}(\mathbb{P}^2,3)}(t) = 1 + 2t^2 + 3t^4 + 4t^6 + 4t^8 + 4t^{10} + 4t^{12} + 3t^{14} + 2t^{16} + t^{18}\\
\\
P_{\mathrm{\overline{Q}}_{1}(\mathbb{P}^3,4)}(t)= \\
\\
\text{\small
$1 + 2t^2 + 4t^4 + 5t^6 + 9t^8 + 10t^{10} + 14t^{12} + 14t^{14} + 17t^{16} + 14t^{18} + 14t^{20} + 10t^{22} + 9t^{24} + 5t^{26} + 4t^{28} + 2t^{30} + t^{32}$.}\\
\end{array}
\end{equation}

While it is difficult to compute the full Poincar\'{e} polynomial of $\qpnd$ for arbitrary $n,d$ because of the large number of fixed loci, it is nonetheless worthwhile to compute the most accessible Betti numbers, namely $h^i(\qpnd, \Q)$ for $i = 2$ and $i$ odd.

In the case of $h^2(\qpnd, \Q)$, the main observation is

\begin{lem}\label{TypeBd>2}
For any fixed locus $X_{\Gamma}$ in $\qpnd$ of type B, $d_{\Gamma}^+ \geq 2$.
\end{lem}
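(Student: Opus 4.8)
The plan is to show that for any type B fixed locus $X_\Gamma$, the normal bundle $N_\Gamma$ carries at least two positive $\Cstar$-weights, drawn from the lists (\ref{N1}), (\ref{N2}), (\ref{N3}). Since the graph $\Gamma$ is an $m$-cycle with $m \geq 1$, there are two cases to organize the argument around: $m = 1$ and $m \geq 2$.

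First I would treat the case $m \geq 2$. The key point is that the weights $w_{\nu_i}$ are distinct along the cycle (by the constraint $\nu(v_i) \neq \nu(v_{i+1})$, though for $m \geq 3$ non-adjacent vertices could in principle share a value — but here it suffices that consecutive ones differ), and the edge covering numbers $\delta_i$ are positive integers. I would focus on the first family of weights in (\ref{N1}): $\frac{c}{\delta_i}(w_{\nu_i} - w_{\nu_{i+1}})$ for $c$ ranging over $\pm 1, \dots, \pm(\delta_i - 1)$ together with the second family $w_j - \left(\frac{\delta_i - c}{\delta_i} w_{\nu_i} + \frac{c}{\delta_i} w_{\nu_{i+1}}\right)$. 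The cleanest uniform argument: the weights coming from $N_1(i)$ for a single edge $e_i$ always include, among the $w_j$-family, contributions from each $j \neq \nu_i, \nu_{i-1}$; since $n \geq 1$ and there are $nd$ total dimensions, one counts that $N_1(i)$ alone contributes $(\delta_i - 1) \cdot 2 + \delta_i(n-2)$ or $\delta_i(n-1) - 1$ weights depending on whether $\nu_{i-1} = \nu_{i+1}$, and these come in pairs that are negatives of each other via the symmetry $c \mapsto -c$ in the first family, forcing at least one strictly positive weight per edge — hence at least $m \geq 2$ positive weights from the $N_1$ part. I would make this precise by exhibiting, for each edge, one explicitly positive weight (e.g. choosing $c$ of the appropriate sign relative to $w_{\nu_i} - w_{\nu_{i+1}}$).

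The harder case is $m = 1$: here $\Gamma$ is a single loop, one vertex $v_1$ and one edge $e_1$, with $\nu(v_1) = p_{\nu_1}$ and the constraint $\nu(v_1) \neq \nu(v_2) = \nu(v_1)$ — this is vacuous since there is only one vertex, so actually a loop edge requires interpreting $\nu_{i+1} = \nu_{i-1} = \nu_1$; but then the first family in (\ref{N1}) degenerates to $0$ and one must extract positivity from the second family $w_j - w_{\nu_1}$ for $j \neq \nu_1$ (each appearing with multiplicity $\delta_1$), from (\ref{N2}), and from (\ref{N3}). Since $w_0 < w_1 < \cdots < w_{n-1}$ are strictly increasing, $w_j - w_{\nu_1} > 0$ for every $j > \nu_1$. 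If $\nu_1 < n-1$ this already gives $\delta_1 \geq 1$ positive weights from (\ref{N1}), and I would then find a second positive weight from (\ref{N2}) (which for a loop with $s_1 = 0$ gives $2w_{\nu_1} - 2w_{\nu_1} = 0$, unhelpful, so I would need $s_1 \neq 0$ or go to $\delta_1 \geq 2$) or from (\ref{N3}) (weights $w_j - w_{\nu_1}$ with multiplicity $s_1$). The genuinely delicate subcase is $\nu_1 = n-1$ with $\delta_1 = 1$ and $s_1 = 0$: but $\sum s_i + \delta_i = d$ forces $\delta_1 = d$ when $m=1$ and $s_1 = d - \delta_1$, so for $d \geq 2$ either $\delta_1 \geq 2$ (giving multiple copies and the first family of (\ref{N1}) becomes nonzero — wait, the first family needs $\nu_1 \neq \nu_2$) or $s_1 > 0$. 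I expect the main obstacle to be exactly this bookkeeping for the loop $m=1$: one must carefully use $d \geq 2$ together with the constraint $s_1 + \delta_1 = d$ to guarantee that either the covering number is large enough that (\ref{N1})'s $w_j$-family supplies two positive weights, or the torsion $s_1$ is positive so that (\ref{N2}) and (\ref{N3}) contribute. I would organize the $m=1$ analysis by whether $n \geq 2$ (plenty of room, since $w$-increments give positive weights freely) versus $n = 1$ (where $\qpnd = \qg1d$ has a different description and type B loci must be re-examined using the explicit $\p^{d-1}/(\Z/d)^2$ fiber structure from Section \ref{openlocus}), checking in each situation that two of the listed weights are strictly positive.
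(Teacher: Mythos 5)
The proposal misses the key organizing idea of the paper's proof and, as written, has two significant gaps.

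First, the case split on $m$ is a false lead. The constraint $\nu(v_i) \neq \nu(v_{i+1})$ is taken cyclically (indices mod $m$), so for $m = 1$ it reads $\nu(v_1) \neq \nu(v_1)$, which is \emph{impossible}, not ``vacuous.'' There are no type B decorated graphs with $m = 1$; you spend the bulk of your analysis on a case that simply does not occur. (Geometrically: a type B fixed locus must have at least two edge components mapping to distinct coordinate lines, so the cycle always has $m \geq 2$ vertices.)

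Second, and more seriously, your argument for $m \geq 2$ has a gap. You claim the first family of $N_1(i)$ supplies ``at least one strictly positive weight per edge'' by the symmetry $c \mapsto -c$. But the $c$-range in (\ref{N1}) is $\{-\delta_i, \dots, -1\} \cup \{1, \dots, \delta_i - 1\}$, which is \emph{not} symmetric: $c = -\delta_i$ is unpaired, and for $\delta_i = 1$ the range is just $\{-1\}$. So when $\delta_i = 1$ the first family of $N_1(i)$ contributes only the single weight $w_{\nu_{i+1}} - w_{\nu_i}$, which is positive only if $\nu_{i+1} > \nu_i$. Concretely, for $m = 2$, $\nu_1 = 0$, $\nu_2 = 1$, $\delta_1 = \delta_2 = 1$, $s_1 = s_2 = 0$, the first families of $N_1(1)$ and $N_1(2)$ give exactly one positive weight total, not two. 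Your argument would then have to fall back on the second family of $N_1$, on $N_2$, or on $N_3$, and you never spell out how to guarantee a second positive weight in general.

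The paper's proof sidesteps all of this with a single clean choice: pick a vertex $v_i$ with $\nu_i$ minimal among the vertex labels. Then $w_{\nu_i}$ is strictly smaller than $w_{\nu_{i+1}}$ and $w_{\nu_{i-1}}$. This immediately gives one positive weight from $N_1(i)$, namely $w_{\nu_{i+1}} - w_{\nu_i}$ at $c = -\delta_i$, and a second positive weight from $N_2(i)$: either $w_{\nu_{i+1}} + w_{\nu_{i-1}} - 2w_{\nu_i}$ (if $s_i = 0$) or $w_{\nu_{i+1}} - w_{\nu_i}$ and $w_{\nu_{i-1}} - w_{\nu_i}$ (if $s_i > 0$), all positive by minimality. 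This works uniformly with no case analysis on $m$ or $\delta_i$. You should replace the edge-by-edge counting with this extremal-vertex argument.
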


\begin{proof}
Fix a decorated graph $\Gamma$.  Pick a vertex $v_i$ of $\Gamma$ such that $\nu_i$ is minimal.  We demonstrate two positive weights of the bundle $N_{\Gamma}$.  The first is the weight $w_{\nu_{i+1}}-w_{\nu_i}$ of $N_1(i)$, which appears in the first expression of (\ref{N1}) for $c = \delta_i$.  This is positive because by our original choice of $\Cstar$ action, if $\nu_i$ is minimal then $w_{\nu_i} < w_{\nu_{i+1}}$.

The second is a weight of $N_2(i)$.  Here there are two cases.  If $s_i=0$, we have the weight $w_{\nu_{i+1}} + w_{\nu_{i-1}} - 2w_{\nu_{i}}$.  If $s_i>0$ we have two weights $w_{\nu_{i-1}} - w_{\nu_{i}}$ and $w_{\nu_{i+1}} - w_{\nu_{i}}$.  This gives either one or two positive weights, again by minimality of $w_{\nu_i}$.  In either case there are at least two positive weights of the normal bundle $N_{\Gamma}$, and we conclude $d_{\Gamma}^+ \geq 2$.
\end{proof}

\begin{prop}
$h^2(\qpnd, \Q)=2$ for all $n,d$.
\end{prop}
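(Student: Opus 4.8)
The plan is to apply the Bia{\l}ynicki-Birula formula of Theorem \ref{BBforstacks}, namely $P_X(t) = \sum_i P_{X_i}(t)\, t^{2d_i^+}$, and extract the coefficient of $t^2$. Since the coefficient of $t^2$ in $P_X$ equals $h^2(X,\Q)$ (all cohomology being even by the shape of the formula and the fact that each fixed locus $X_\Gamma$ is a product of quotients of $\overline{M}_{0,2|s_i}$, whose cohomology is even — this is essentially Proposition \ref{oddratcoh}, which I may invoke), only those strata with $d_i^+ \le 1$ can contribute, and among those only the $t^0$ and $t^2$ terms of their Poincar\'e polynomials matter.

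First I would dispose of the Type B loci: by Lemma \ref{TypeBd>2}, every Type B stratum has $d_\Gamma^+ \ge 2$, so $t^{2d_\Gamma^+}$ is divisible by $t^4$ and such strata contribute nothing to the coefficients of $t^0$ or $t^2$. So the entire contribution to $h^0$ and $h^2$ comes from the $n$ Type A loci $X_\ell \cong \qg1d$, $0 \le \ell \le n-1$, whose fiber dimensions are $d_\ell^+ = d\ell$ by Equation (\ref{typeAweights}). Thus $d_0^+ = 0$ and $d_\ell^+ = d\ell \ge d \ge 2$ for $\ell \ge 1$ (since $d \ge 2$ when $n \ge 2$; the case $n=1$ is already handled by Theorem \ref{PoincareOfG11Formula}, where the coefficient of $t^2$ is read off as $2$). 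Hence only the single stratum $X_0 \cong \qg1d$ with $d_0^+ = 0$ contributes to $h^0$ and $h^2$, giving
\begin{equation*}
h^0(\qpnd,\Q) = h^0(\qg1d,\Q) = 1, \qquad h^2(\qpnd,\Q) = h^2(\qg1d,\Q) = 2,
\end{equation*}
where the last equality uses the computation $P_{\qg1d}^{vir}(t) = 1 + 2t^2 + \cdots + 2t^{2d-2} + t^{2d}$ established in Section \ref{sec:top1} (valid for all $d \ge 1$), together with the identification of the virtual and genuine Poincar\'e polynomials for the smooth proper orbifold $\qg1d$.

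The only subtlety — and the one point requiring a touch of care rather than a routine citation — is to make sure the BB stratification is genuinely by \emph{locally closed} pieces with the closure relations of Theorem \ref{BBforstacks}(i), so that the additive formula (ii) applies verbatim to our $\Cstar$-action on the smooth proper Deligne-Mumford stack $\qpnd$; this is exactly the content of Fontanari's Proposition 1, and since Marian-Oprea-Pandharipande have shown $\qpnd$ is a smooth projective orbifold, the hypotheses are met. Once that is in place the argument is purely bookkeeping: isolate the strata of fiber dimension $0$ and $1$, observe there are none of dimension $1$ and exactly one ($X_0$) of dimension $0$, and read off the coefficient of $t^2$ from the already-computed Poincar\'e polynomial of $\qg1d$. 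I expect no real obstacle beyond confirming this dimension count, which Lemma \ref{TypeBd>2} and Equation (\ref{typeAweights}) make immediate.
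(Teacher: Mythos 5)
Your proposal follows the same Bia{\l}ynicki-Birula strategy as the paper: dispose of Type~B loci with Lemma~\ref{TypeBd>2}, observe via equation~(\ref{typeAweights}) that the Type~A loci $X_\ell$ have $d_\ell^+ = d\ell$, and read off the coefficient of $t^2$ from the single stratum $X_0 \cong \qg1d$ of fiber dimension zero. This works for $d \ge 2$.

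However, there is a genuine gap at the parenthetical ``since $d \ge 2$ when $n \ge 2$.'' This implication is false: $n$ and $d$ are independent parameters, and the paper's case split is by $d$, not by $n$. When $d=1$ and $n \ge 2$, the Type~A stratum $X_1$ has $d_1^+ = d\cdot 1 = 1$, so the term $P_{X_1}(t)\,t^{2d_1^+} = (1+t^2)t^2$ contributes~$1$ to the coefficient of $t^2$. In that regime $P_{\qg1d}(t) = 1 + t^2$ (since $\overline{Q}_1(G(1,1),1) \cong \Moobar \cong \p^1$), so $X_0$ contributes only~$1$ and the answer $h^2=2$ comes from \emph{two} strata, not one. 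Your argument, which isolates $X_0$ alone, would thus predict $h^2=1$ in this case. The paper avoids this by handling $d=1$ as a separate case via the explicit isomorphism $\overline{Q}_1(\pnmo,1) \simeq \Moobar\times\pnmo \simeq \p^1\times\pnmo$, where $h^2 = 2$ is immediate. You should replace your ``$n=1$ vs.\ $n\ge 2$'' dichotomy with the paper's ``$d=1$ vs.\ $d\ge 2$'' dichotomy: the BB bookkeeping you describe is correct precisely for $d\ge 2$, and $d=1$ should be dispatched directly from the product structure (or by running BB with the corrected count that both $X_0$ and $X_1$ contribute).
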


\begin{proof}
We consider the cases $d=1$ and $d \geq 2$ separately.  When $d=1$,
$$\mathrm{\overline{Q}_{1}(\pnmo,1)} \simeq \Moobar \times \pnmo \simeq \p^1 \times \pnmo (\mathrm{\cite{MOP} \ Section \ 2)}.$$
We see directly in this case that $h^2(\p^1 \times \pnmo) = 2$.

When $d \geq 2$:
For fixed loci of type A, which we'd named $X_{\ell}$ for $0 \leq \ell \leq n-1$, we've seen $d_{\ell}^+ > 2$ for all $\ell \geq 1$ (\ref{typeAweights}).  Together with Lemma \ref{TypeBd>2} this implies that in the expression

$$P_{\qpnd}(t) = \sum_{i} P_{X_i}(t) t^{2d_i^+}$$

given by Theorem \ref{BBforstacks} there is only one fixed locus that contributes to the $t^2$ term, namely $X_0 \simeq \qg1d$, because the Bialinicki-Birula cell associated to $X_0$ has zero dimensional fibers.  By Theorem \ref{PoincareOfG11Formula}, $P_{\qg1d} = 1 + 2t^2 +...+ 2t^{2d-2} + t^{2d}$.  Therefore $$h_2(\qpnd, \Q) = h_2(\qg1d) = 2.$$  By Poincar\'{e} duality, $h^2(\qpnd, \Q)$ also equals 2.
\end{proof}

To compute $h^i(\qpnd, \Q)$ when $i$ is odd, we will use a refinement of the following Lemma from \cite{MOP}.

\begin{lem} (\cite{MOP}, Lemma 2).  \label{Q02Bettis}
For $d>0$, the Poincar\'{e} polynomial of $\M02Sd$ is $$P_{\M02Sd}(t) = (1 + t^2)^{d-1}. $$
\end{lem}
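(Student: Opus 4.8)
The plan is to compute $P^{vir}_{\M02Sd}(t)$ by a locally closed stratification and then use that the virtual Poincar\'{e} polynomial agrees with the ordinary one for smooth complete orbifolds. Since $\overline{M}_{0,2|d}$ is smooth and projective and $S_d$ is finite, $\M02Sd$ is a smooth complete orbifold, so $P_{\M02Sd}=P^{vir}_{\M02Sd}$; by additivity of $P^{vir}$ on locally closed strata it then suffices to exhibit a stratification whose pieces have computable virtual Poincar\'{e} polynomials, and to sum.

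First I would describe which curves occur. Stability of $(C,\, p_1+p_2+\epsilon\sum q_j)$ with $d\epsilon<1$ requires, on every genus $0$ component, that $(\#\text{nodes})+(\#\text{heavy points})+\epsilon\cdot(\#\text{light points})>2$. A short case analysis shows: every leaf of the dual tree carries a heavy point (a leaf has one node, so $0$ heavy points would force $\epsilon\cdot(\#\text{light})>1$, impossible); there are only two heavy points, so $C$ is a chain $C_1-\cdots-C_k$ with $p_1\in C_1$, $p_2\in C_k$; every component of the chain carries at least one light point $q_j$; and light points may collide within a component without creating a bubble. Hence a point of $\M02Sd$ is recorded by a chain length $k$, a composition $(d_1,\dots,d_k)$ of $d$ with all $d_i\geq 1$ giving the number of light points (with multiplicity) on the successive components, and the unordered collision data on each component. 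I would take as strata, indexed by such compositions, the locally closed loci where the dual graph is exactly the $k$-chain with those light-point multiplicities. Normalizing a node or heavy point of each component to $0$ and the other to $\infty$ leaves $\Cstar$ acting on each component, and tracking the $S_d$-action (which only permutes light points, preserving which component each sits on and the sizes $d_i$) identifies the stratum for $(d_1,\dots,d_k)$ with $\prod_{i=1}^{k}\bigl(\mathrm{Sym}^{d_i}(\Cstar)/\Cstar\bigr)$.

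Next I would evaluate $P^{vir}_{\mathrm{Sym}^{m}(\Cstar)/\Cstar}(t)$ exactly as was done for $\Delta_d^1[d]$ in Section~\ref{openlocus}: via elementary symmetric functions $\mathrm{Sym}^m(\Cstar)\cong\C^{m-1}\times\Cstar$, sending $\{a_1,\dots,a_m\}$ to $(b_0,\dots,b_{m-1})$ with $\prod(z-a_i)=z^m+b_{m-1}z^{m-1}+\cdots+b_0$ and $b_0\neq 0$; under scaling $b_j$ has weight $m-j$, so $\mathrm{Sym}^m(\Cstar)/\Cstar\cong\C^{m-1}/(\Z/m)$, and Lemma~\ref{CnModGp} gives $P^{vir}_{\mathrm{Sym}^m(\Cstar)/\Cstar}(t)=t^{2(m-1)}$. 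Summing over strata by additivity yields
\[
P^{vir}_{\M02Sd}(t)=\sum_{\substack{k\geq 1,\ d_i\geq 1\\ d_1+\cdots+d_k=d}}\ \prod_{i=1}^{k}t^{2(d_i-1)}=\sum_{k=1}^{d}\binom{d-1}{k-1}\,t^{2(d-k)}=(1+t^2)^{d-1},
\]
the last step because there are $\binom{d-1}{k-1}$ compositions of $d$ into $k$ positive parts and $\binom{d-1}{k-1}=\binom{d-1}{d-k}$.

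The main obstacle is the geometric bookkeeping in the middle step: proving rigorously that stability forces the chain structure with at least one light point on each component (so that only compositions, and not more general trees, contribute), and checking that passing to the $S_d$-quotient replaces each component's ordered configuration space by the corresponding symmetric power \emph{with no additional identification between distinct strata}, so that the combinatorial sum comes out exactly as above. Both points are elementary, but they must be handled with care, since an error in the component-count or in the group action would change the coefficients in the sum.
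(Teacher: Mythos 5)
Your proof is correct, but it's worth noting that the paper does not actually prove this lemma: it is quoted directly from \cite{MOP} (their Lemma~2), and the only indication of that proof's shape is the remark in the proof of Proposition~\ref{oddratcoh} that ``a slightly modified version of the proof of Lemma~\ref{Q02Bettis} of \cite{MOP} adding the use of Lemma~\ref{CnModGp}'' also handles the $\mathrm{Aut}(\Gamma)$-quotients. So what you've produced is a self-contained proof written in the paper's own toolkit rather than a reconstruction of a proof the paper contains.

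On correctness: the stability analysis is right. With $\epsilon < 1/d$, every leaf of the dual tree must carry a heavy point, so the underlying curve is a chain with $p_1$ and $p_2$ at the two ends; the chain is not reversible since $p_1,p_2$ are labelled, so strata are indexed by ordered compositions $(d_1,\dots,d_k)$ of $d$ with $d_i\geq 1$. For the $S_d$-quotient step, the cleaner phrasing is this: $S_d$ permutes the light points and hence permutes the fixed-assignment strata of $\overline{M}_{0,2|d}$, two such strata being identified iff they give the same composition, and the stabilizer of a fixed assignment is $\prod_i S_{d_i}$; this turns $\prod_i\bigl((\Cstar)^{d_i}/\Cstar\bigr)$ into $\prod_i\bigl(\mathrm{Sym}^{d_i}(\Cstar)/\Cstar\bigr)$ with no extra cross-stratum identifications. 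The identification $\mathrm{Sym}^{m}(\Cstar)/\Cstar\cong\C^{m-1}/(\Z/m)$ and the appeal to Lemma~\ref{CnModGp} is literally the calculation the paper does for $\Delta_d^1[d]$ in Section~2.3, and the binomial identity $\sum_{k=1}^d\binom{d-1}{k-1}t^{2(d-k)}=(1+t^2)^{d-1}$ closes it. The equality $P=P^{vir}$ at the end is justified because $\M02Sd$ is a finite-group quotient of a smooth projective variety. In short: correct, in the same spirit as the rest of Section~2, and a useful addition since the paper defers to \cite{MOP} for this point.
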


\begin{prop} \label{oddratcoh}
For all $n, d$, and all $i$ odd, $h^i(\qpnd, \Q)=0$.  That is, the odd rational cohomology of $\qpnd$ vanishes.
\end{prop}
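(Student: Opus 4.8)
The plan is to use the Bia\l ynicki-Birula stratification from Theorem \ref{BBforstacks}, reducing to the claim that every fixed locus $X_i$ of the $\Cstar$-action has vanishing odd cohomology. Indeed, the formula $P_{\qpnd}(t) = \sum_i P_{X_i}(t)\, t^{2d_i^+}$ immediately implies that if each $P_{X_i}(t)$ is even as a polynomial in $t$, then so is $P_{\qpnd}(t)$, which is exactly the assertion that odd rational cohomology vanishes. So the entire burden falls on understanding the two types of fixed loci described in Section \ref{sec:FixedLoci}.

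For Type A fixed loci, each $X_\ell$ is isomorphic to $\qg1d$, and by Theorem \ref{PoincareOfG11Formula} we already know $P_{\qg1d}(t) = 1 + 2t^2 + \cdots + 2t^{2d-2} + t^{2d}$, which is manifestly even; alternatively this also follows from the virtual Poincar\'e polynomial computation in Section \ref{sec:top1}, since $\qg1d$ is a smooth complete orbifold. For Type B fixed loci, recall that $X_\Gamma \simeq \bigl(\prod_{i\,|\,s_i>0} \overline{M}_{0,2|s_i}\bigr)/\mathrm{Aut}(\Gamma)$. The cohomology of a finite-group quotient is the invariant part of the cohomology of the cover, so it suffices to show that $\prod_{i\,|\,s_i>0} \overline{M}_{0,2|s_i}$ has vanishing odd cohomology; by the K\"unneth formula this reduces to each factor $\overline{M}_{0,2|s_i}$ having vanishing odd cohomology. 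This is where I would invoke a refinement of Lemma \ref{Q02Bettis}: that Lemma gives $P_{\M02Sd}(t) = (1+t^2)^{d-1}$, even, and the natural refinement is that $\overline{M}_{0,2|k}$ itself (before quotienting by $S_k$) has a cell decomposition or an analogous description making its Poincar\'e polynomial even — for instance, one can stratify $\overline{M}_{0,2|k}$ by the boundary structure, or observe that it fibers over a point with all strata being quotients of affine spaces by finite groups (as in the analysis of the $\Deldid$ in Section \ref{sec:top1}), so that Lemma \ref{CnModGp} applies stratum-by-stratum via additivity of the virtual Poincar\'e polynomial. Since $\overline{M}_{0,2|k}$ is a smooth complete orbifold, its virtual and ordinary Poincar\'e polynomials agree, giving the vanishing of odd cohomology.

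The main obstacle is establishing the refinement of Lemma \ref{Q02Bettis}, i.e. odd-cohomology vanishing for $\overline{M}_{0,2|k}$ without the $S_k$-quotient. The subtlety is that Lemma \ref{Q02Bettis} as stated only controls the $S_k$-invariant part; to run the K\"unneth argument on $X_\Gamma$ one genuinely needs the full (non-invariant) cohomology of each $\overline{M}_{0,2|k}$ to be even, since $\mathrm{Aut}(\Gamma)$ permutes the factors and acts within each factor in a way that does not obviously restrict to the $S_k$-invariants of the individual factors. I would handle this either by a direct stratification of $\overline{M}_{0,2|k}$ into pieces that are finite quotients of affine spaces — each contributing only even-degree classes by Lemma \ref{CnModGp}, with additivity of $P^{vir}$ then giving an even total — or by citing the known fact that Hassett spaces $\overline{M}_{0,n|\text{weights}}$ of genus zero have only even cohomology (they admit torus actions with isolated-ish fixed behaviour, or are iterated blowups/contractions of $\overline{M}_{0,n}$ which itself has only even cohomology). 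Once that refinement is in hand, the rest is a routine assembly: Type A and Type B loci all have even Poincar\'e polynomials, hence so does $\qpnd$.
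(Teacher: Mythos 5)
Your proposal follows the same overall strategy as the paper: Bia\l ynicki-Birula stratification reduces the claim to odd-cohomology vanishing for each $\Cstar$-fixed locus, Type A loci are handled by Theorem \ref{PoincareOfG11Formula}, and Type B loci are handled by Lemma \ref{Q02Bettis}. The subtlety you flag --- that Lemma \ref{Q02Bettis} as stated only controls the $S_k$-invariant part of $H^*(\overline{M}_{0,2|k},\Q)$ --- is a genuine point, but it can be sidestepped rather than resolved head-on. In the paper's proof the Type B fixed locus is written as $\prod_{i\mid s_i>0}\bigl(\overline{M}_{0,2|s_i}/S_{s_i}\bigr)/\mathrm{Aut}(\Gamma)$, with the interior $S_{s_i}$-quotients already taken; consequently $H^*(X_\Gamma,\Q)$ is the $\mathrm{Aut}(\Gamma)$-invariant subspace of $\bigotimes_i H^*\bigl(\overline{M}_{0,2|s_i}/S_{s_i},\Q\bigr)$, and since each tensor factor is even-graded by Lemma \ref{Q02Bettis}, so is the tensor product, and hence so is any subspace --- no refinement of the lemma is required. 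The expression $\prod_i \overline{M}_{0,2|s_i}/\mathrm{Aut}(\Gamma)$ that appears earlier in Section \ref{sec:FixedLoci}, which is what prompted your worry, is best read with the interior $S_{s_i}$-quotients understood. Your back-up sketches for proving that $\overline{M}_{0,2|k}$ itself (before quotienting) has only even cohomology --- stratification into finite quotients of affine spaces and appeal to Lemma \ref{CnModGp}, or general facts about genus-zero Hassett spaces --- are sound and would prove a stronger statement, but they deliver more than the argument needs. The paper goes further than evenness and asserts the precise equality $P^{vir}_{X_\Gamma}(t)=\prod_{i\mid s_i>0}(1+t^2)^{s_i-1}$ via a modification of the proof of Lemma \ref{Q02Bettis} from \cite{MOP} together with Lemma \ref{CnModGp}; for the proposition at hand only evenness is used, which your approach (suitably repaired) also delivers.
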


\begin{proof}
By Theorem \ref{BBforstacks} $P_{\qpnd}(t) = \sum P_{X_i}(t) t^{2d_i^+},$ so if the Poincar\'{e} polynomial of every fixed locus vanishes in odd degree, $P_{\qpnd}(t)$ will too.  Recall that there are two types of fixed loci.  A fixed locus $X_{\ell}$ of the first type is isomorphic to $\qg1d$, and by Theorem \ref{PoincareOfG11Formula} the Poincar\'{e} polynomial of $X_{\ell}$ vanishes in odd degree.

A fixed locus of the second type $X_{\Gamma}$ is isomorphic to some $$\displaystyle{\prod_{i | s_i > 0} \left( \overline{M}_{0,2|s_i} / S_{s_i} \right) } / Aut(\Gamma),$$ where we take the convention that the empty product is a point.  Lemma \ref{Q02Bettis}, does not compute the Poincar\'{e} polynomial directly if Aut$(\Gamma)$ is not trivial.  But in that case, a slightly modified version of the proof of Lemma \ref{Q02Bettis} of \cite{MOP} adding the use of Lemma \ref{CnModGp} gives

$$P^{vir}_{X_{\Gamma}} (t) = \prod_{i|s_i > 0} (1+t^2)^{s_i-1}.$$

We conclude that for all fixed loci the Poincar\'{e} polynomial vanishes in odd degree, so $P_{\qpnd}(t)$ also vanishes in odd degree.
\end{proof}

In \cite{Fontanari}, Fontanari proves a similar result for the Vakil-Zinger desingularization $\widetilde{M}^0_{1,n}(\p^r,d)$ of $\overline{M}_{1,n}(\p^r,d)$:

\begin{thm} (\cite{Fontanari}, Theorem 1.)
$H^k(\widetilde{M}^0_{1,n}(\p^r,d), \Q) = 0$ for every $n \geq 0$, $r \geq 1$, and odd $k < 10$.
\end{thm}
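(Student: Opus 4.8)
The plan is to run, essentially verbatim, the strategy of Proposition \ref{oddratcoh}, but on the Vakil--Zinger space $\widetilde{M}^0_{1,n}(\p^r,d)$ in place of the stable quotient space $\qpnd$. First I would recall from \cite{VakilZinger} that $\widetilde{M}^0_{1,n}(\p^r,d)$ is a smooth proper Deligne--Mumford stack, built from the main component $\overline{M}^0_{1,n}(\p^r,d)$ of the Kontsevich space of stable maps by a sequence of blow-ups along smooth centers. A generic one-parameter subgroup $\Cstar \to PGL_{r+1}$ acting on $\p^r$ with isolated fixed points induces a $\Cstar$-action on $\widetilde{M}^0_{1,n}(\p^r,d)$, and the Bia{\l}ynicki--Birula decomposition for orbifolds (Theorem \ref{BBforstacks}) gives
\[
P_{\widetilde{M}^0_{1,n}(\p^r,d)}(t) = \sum_i P_{F_i}(t)\, t^{2 n_i},
\]
where the $F_i$ are the connected components of the fixed locus and the $n_i$ the codimensions of the associated cells. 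Since multiplication by $t^{2 n_i}$ preserves parity, it suffices to show that every $P_{F_i}(t)$ is free of odd-degree monomials in degrees below $11$.

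The second, and main, step is to pin down the fixed loci $F_i$. The $\Cstar$-fixed stable maps in $\overline{M}_{1,n}(\p^r,d)$ are governed by localization graphs in the style of Graber--Pandharipande: up to a finite automorphism group of the graph, each fixed locus is a product of genus $0$ moduli $\overline{M}_{0,\bullet}$ (from the contracted rational tails and the marking/node data) with at most one genus $1$ factor $\overline{M}_{1,\bullet}$ (from a contracted elliptic component, when one is present). To descend to $\widetilde{M}^0_{1,n}(\p^r,d)$ one must track how the Vakil--Zinger blow-ups interact with the torus action: blowing up a smooth $\Cstar$-invariant center replaces a fixed locus by a projective bundle over a sub-locus and produces exceptional divisors, but all of these moves build the new fixed loci from the old ones by projective-bundle operations and passage to connected components. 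Hence each $F_i$ is assembled, modulo a finite group action, out of copies of $\overline{M}_{0,\bullet}$, at most one copy of $\overline{M}_{1,\bullet}$, and projective bundles.

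Granting that description the vanishing is formal. One has $H^{\mathrm{odd}}(\overline{M}_{0,\bullet},\Q)=0$ because that cohomology is tautological, the cohomology of a projective bundle contributes only even classes, and $H^k(\overline{M}_{1,\bullet},\Q)=0$ for all odd $k<11$ --- the lowest odd cohomology of $\overline{M}_{1,m}$ lying in degree $11$ and first appearing at $m=11$, which is exactly what caps the admissible range. Finite quotients are handled as in Lemmas \ref{PnModGp} and \ref{CnModGp}: a holomorphic action is orientation preserving and $H^*(Y/G,\Q)=H^*(Y,\Q)^G$, so taking invariants neither destroys nor creates anything in odd degree. A K\"{u}nneth computation then shows each $P_{F_i}(t)$ has no odd monomials below degree $11$, and the displayed Bia{\l}ynicki--Birula sum propagates this to $\widetilde{M}^0_{1,n}(\p^r,d)$.

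The real obstacle is the second step: giving a clean and complete description of the $\Cstar$-fixed loci of the Vakil--Zinger space. The blow-up centers of \cite{VakilZinger} are themselves delicate --- loci of maps whose contracted genus $1$ subcurve carries a prescribed configuration of rational tails --- and one has to check both that each center is smooth and $\Cstar$-invariant and that the induced surgery on the fixed loci never manufactures odd cohomology below degree $11$, which requires controlling both the exceptional divisors and the finite automorphism groups that can appear. Everything else --- the Bia{\l}ynicki--Birula input, the vanishing facts for $\overline{M}_{0,\bullet}$ and $\overline{M}_{1,\bullet}$, and the finite-quotient bookkeeping --- is routine and is already packaged in the lemmas of this section.
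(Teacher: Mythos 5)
This statement is cited from Fontanari's paper and is not proved in the present paper; the surrounding text only records it as a point of comparison with Proposition \ref{oddratcoh}. So there is no in-house proof to match your argument against, and the question is purely whether your sketch is viable.

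The transplant of the Bia{\l}ynicki--Birula strategy is a sensible way to try to reconstruct Fontanari's result, and most of the outer structure is sound: Theorem \ref{BBforstacks} applies to smooth proper DM orbifolds, $H^{\mathrm{odd}}(\overline{M}_{0,\bullet},\Q)=0$, the first odd class on $\overline{M}_{1,m}$ appears in degree $11$, and holomorphic finite quotients preserve the vanishing (Lemmas \ref{PnModGp} and \ref{CnModGp}). But you have correctly located, and then waved past, the genuine gap: step two is not a detail to be checked but the actual content of the theorem. The Vakil--Zinger centers are defined by conditions on contracted genus-$1$ subcurves and their rational tails, and while these conditions are intrinsic and hence $\Cstar$-invariant, you would need to verify smoothness and invariance of each center in the iterated tower of blow-ups, and then show that each blow-up replaces the relevant $\Cstar$-fixed pieces by projective bundles over loci whose cohomology is already controlled. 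You assert this happens ``by projective-bundle operations and passage to connected components,'' but that is precisely what must be proved; the iterated structure means errors or new odd classes could accumulate at any stage. Furthermore, after blowing up, the fixed locus is no longer simply the strict transform of the old fixed locus; exceptional divisors carry new $\Cstar$-fixed components whose moduli-theoretic description has to be worked out. Until that bookkeeping is made precise, the argument is a plan rather than a proof, and you are right that this is where the real work lies.
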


\section{Picard group} \label{sec:pic}

We use the exponential sequence to parlay our knowledge about the singular cohomology of $\qpnd$ into knowledge of $\textrm{Pic}(\qpnd)_{\Q}$. Taking homology of the exponential sequence for $\qpnd$ and tensoring by $\Q$ gives:

$$H^1(\qpnd, \OO) \otimes \Q \rightarrow \textrm{Pic}(\qpnd) \otimes \Q \rightarrow H^2(\qpnd, \Z) \otimes \Q.$$

By Proposition \ref{oddratcoh}, $h^1(\qpnd,\Q) = 0$ so $$H^1(\qpnd, \OO) = H^{0,1}(\qpnd) \subset H^1(\qpnd, \C) = 0.$$

Hence $\textrm{Pic}(\qpnd) \otimes \C$ injects into $H^2(\qpnd, \C)$, and its image is $H^{1,1}(\qpnd)$.  Since $\textrm{Pic}(\qpnd) \otimes \C$ is nontrivial and $$H^{2,0}(\qpnd) \oplus H^{1,1}(\qpnd) \oplus H^{0,2}(\qpnd) = H^2(\qpnd, \C) \simeq \C^2,$$ we conclude that $H^{2,0}(\qpnd) = H^{0,2}(\qpnd) = 0$ and $$\textrm{Pic}(\qpnd) \otimes \Q \simeq H^2(\qpnd, \Q).$$  In particular, we have:

\begin{thm} \label{PicardRank}
The Picard rank of $\qpnd$ is equal to $h^2(\qpnd,\Q) = 2$.
\end{thm}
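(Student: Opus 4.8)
The plan is to extract the Picard rank from the topological computations already in hand, via the exponential sequence. First I would use the fact that for the smooth proper Deligne--Mumford stack $\qpnd$ (equivalently, for its coarse moduli scheme, which is a normal projective variety with at worst finite quotient singularities) rational cohomology and the Hodge decomposition behave exactly as in the smooth projective case: $H^k(\qpnd,\C) = \bigoplus_{p+q=k} H^{p,q}(\qpnd)$ with $H^{q,p} = \overline{H^{p,q}}$, and $H^j(\qpnd,\OO) = H^{0,j}(\qpnd)$. This is standard for orbifolds, and it is what makes the exponential exact sequence $0 \to \Z \to \OO \to \OO^* \to 0$ usable here.

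Second, taking the long exact sequence in cohomology and tensoring with $\Q$ gives
$$H^1(\qpnd,\OO)\otimes\Q \longrightarrow \mathrm{Pic}(\qpnd)\otimes\Q \longrightarrow H^2(\qpnd,\Z)\otimes\Q.$$
By Proposition \ref{oddratcoh}, $h^1(\qpnd,\Q)=0$, so $H^1(\qpnd,\OO) = H^{0,1}(\qpnd) \subseteq H^1(\qpnd,\C) = 0$. Hence $\mathrm{Pic}(\qpnd)\otimes\C$ injects into $H^2(\qpnd,\C)$, with image the subspace $H^{1,1}(\qpnd)$ of classes of type $(1,1)$.

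Third, I would combine this with the earlier computation that $h^2(\qpnd,\Q) = 2$ (obtained from the Bia{\l}ynicki--Birula stratification, where only the fixed locus $X_0 \simeq \qg1d$ contributes in degree $2$). Since $H^2(\qpnd,\C) \cong \C^2$ decomposes as $H^{2,0}\oplus H^{1,1}\oplus H^{0,2}$ with $h^{2,0} = h^{0,2}$, and since $\mathrm{Pic}(\qpnd)\otimes\C \cong H^{1,1}(\qpnd)$ is nonzero (there are nontrivial divisor classes, e.g. $D_j$), the only numerical possibility is $h^{2,0} = h^{0,2} = 0$ and $h^{1,1} = 2$. Therefore $\mathrm{Pic}(\qpnd)\otimes\Q \cong H^2(\qpnd,\Q) \cong \Q^2$, so the Picard rank is $2$.

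I expect the only real subtlety to be the appeal to Hodge theory on the stack, or equivalently on the quotient-singular coarse space; once that is granted the rest is formal. An alternative route avoiding Hodge theory would be to prove the lower bound $\mathrm{rk}\,\mathrm{Pic}(\qpnd)\otimes\Q \geq 2$ by exhibiting two divisor classes (such as $D_j$ and $D_{fd}$) together with test curves pairing with them nondegenerately, while the upper bound $\mathrm{rk} \leq h^2 = 2$ follows formally from the exponential sequence; but the Hodge-theoretic argument is cleaner and has the bonus of yielding $h^{2,0}(\qpnd) = 0$ as well.
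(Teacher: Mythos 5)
Your argument is correct and is essentially the same as the paper's: both proofs use the exponential sequence together with $h^1(\qpnd,\Q)=0$ from Proposition \ref{oddratcoh} to embed $\mathrm{Pic}(\qpnd)\otimes\C$ as $H^{1,1}(\qpnd)$ inside $H^2(\qpnd,\C)\cong\C^2$, and then use nontriviality of the Picard group to force $h^{2,0}=h^{0,2}=0$, concluding $\mathrm{Pic}(\qpnd)\otimes\Q\cong H^2(\qpnd,\Q)\cong\Q^2$. Your added remark about the orbifold Hodge-theory subtlety and the alternative lower-bound-by-test-curves route is reasonable commentary but does not change the substance.
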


\subsection{Construction of Divisors} \label{sec:ConstructionOfDivisors}
We begin by defining three divisors on $\qg1d$.  We define the quotient map

$$\varphi: \Mdepsilon2 \rightarrow \qg1d$$

by

$$\varphi (C, p_1, ... ,p_d) = (C, p_1 + ... + p_d).$$

\begin{defn}
A point $(C,D) \in \qg1d$, where $D = p_1 + ... + p_d$ is in $D_b$ if any $p_i = p_j$, i.e. if $D$ contains any point of multiplicity $\geq 2$.
\end{defn}

\begin{defn}
Fix a complex number $\rho \in \C$.  A point $(C,D) \in \qg1d$ is in $D_j(\rho)$ if the $j$-invariant of $C$ is $\rho$.
\end{defn}

For any $\rho$ and $\rho'$, $D_j(\rho)$ is linearly equivalent to $D_j(\rho')$.  We will write simply $D_j$ for the linear equivalence class.

For constructing the next divisor, we begin by defining a divisor $B_{\Penab}$ on $\overline{M}_{1,0|2}$.

\begin{defn}
Let $\Pen$ be a pencil of plane cubics through 8 general points in $\p^2$.  Let $a$ and $b$ be two base points of this pencil.  By the universal property of $\overline{M}_{1,0|2}$, this gives a map $\p^1 \rightarrow \overline{M}_{1,0|2}$.  Let the curve $B_{\Penab}$ be the image of this map.
\end{defn}

Since $\overline{M}_{1,0|2}$ is 2-dimensional, $B_{\Penab}$ is a divisor.  To define the divisor $D_{fd}$, we will use the following maps:

$$ \label{UsefulMdepsilon2}
\begin{CD}
\Mdepsilon2 @> \varphi >> \qg1d \\
@V \pi VV  \\
\overline{M}_{1,0|2}\\
\end{CD}
$$

where $\pi: \Mdepsilon2 \rightarrow \overline{M}_{1,0|2}$ is the projection given by $\pi: (C,{q_1,...,q_d}) \mapsto (\tilde{C}, {q_1,q_2})$ where $\tilde{C}$ is obtained from $C$ by contracting any components that become unstable after forgetting the points $q_3,...,q_d$.

\begin{defn}
$$D_{fd}(\Penab) := \varphi_*(\pi^*(B_{\Penab})).$$
\end{defn}

$B_{\Penab}$ is a divisor in $\overline{M}_{1,0|2}$, so because $\varphi$ is finite $D_{fd}$ is a divisor in $\qg1d$.  Concretely, a point $(C,D) \in \qg1d$ is in $D_{fd}(\Penab)$ if there is an isomorphism $\rho: \tilde{C} \rightarrow \Pen_t$ from a component $\tilde{C}$ of $C$ to a fiber of the pencil such that $a$ and $b$ are both in the image $\rho(D|_{\tilde{C}})$.  $D_{fd}(\Penab)$ is so named because the pencil $\Pen$ with the choice of $a$ and $b$ determines a set of fixed distances on each genus 1 curve.

For any two $\Penab$ and $\Pen'_{a',b'}$ $B_{\Penab}$ is linearly equivalent to $B_{\Pen'_{a',b'}}$ in $\overline{M}_{1,0|2}$ because one can explicitly construct a family over $\p^1$ of pencils of plane cubics with two $-1$ sections, such that $\Penab$ is one fiber of the family and $\Pen'_{a',b'}$ another.  $D_{fd}(\Penab)$ and $D_{fd}(\Pen'_{a',b'})$ are then also linearly equivalent and we will write simply $D_{fd}$ for the linear equivalence class.

To define divisors on $\qpnd$ when $n>1$, start with the open locus $Y \subset \qpnd$ of stable quotients which correspond to a map $f: C \rightarrow \pnmo$ where $C$ is a smooth genus 1 curve and the map is nondegenerate in the sense that the image $f(C)$ does not lie in any hyperplane.  Now fix a hyperplane $H \subset \pnmo$.  There is a map $$\Phi: Y \rightarrow \qg1d$$ sending $(C, f)$ to $(C, f(C) \cap H)$.

We define divisors on $\qpnd$ by taking the preimage of divisors in $\qg1d$ under $\Phi$ and then taking the closure.  In our definition of the divisors below, we explicitly describe these closures. Note the similarity between this map from $H_2(\qg1d) \rightarrow H_2(\qpnd)$ and the isomorphism described in \cite{CarrellGoresky} for a scheme $X$ with a \textit{good} $\Cstar$ action between the homology groups of $X$ and the shifted sums of the homology groups of the fixed loci.

Where confusion is possible, we will put a superscript 1 to denote divisors in $\qg1d$ and a superscript $n$ to denote divisors in $\qpnd$, but when it is clear what space we are working in, we will drop the superscripts.  Also, for a section $s$ of a line bundle, we will use $|s|$ to denote its divisor of zeros.

\begin{defn}
Fix a hyperplane $H \subset \pnmo$, defined by $a_0 x_0 + ... + a_{n-1} x_{n-1} = 0$.  Let $(C, [s_0,...,s_{n-1}])$ be a point in $\qpnd$, and let $s_H = a_0 s_0 + ... + a_{n-1} s_{n-1}$.   Let $\Dbn(H)$ be the subvariety of points $(C, [s_0,...,s_{n-1}])$ such that either $s_H = 0$ or on some component $\tilde{C}$ the divisor $|s_H|$ contains a point with multiplicity $\geq 2$.
\end{defn}

For any two hyperplanes $H$, $H'$ in $\pnmo$, $\Dbn(H)$ and $\Dbn(H')$ are linearly equivalent.  We will write $\Dbn$ for the linear equivalence class.

\begin{defn}
Fix a complex number $\rho \in \C$.  A point $(C,[s_0,...,s_{n-1}]) \in \qpnd$ is in $\Djn(\rho)$ if the $j$-invariant of $C$ is $\rho$.
\end{defn}

As above, for any $\rho$ and $\rho'$, $\Djn(\rho)$ and $\Djn(\rho')$ are linearly equivalent, and we will write $\Djn$ for the linear equivalence class.

\begin{defn}
Fix a hyperplane $H$.  Let $\Penab$ be as in the definition of $\Dfd1$.  A point $(C, [s_0,...,s_{n-1}]) \in \qpnd$ is in $\Dfdn(H, \Penab)$ if either $s_H = 0$ or $(\widetilde{C},|s_H|) \in \Dfd1$.  ($(C,|s_H|)$ may not be stable, but after contracting unstable rational components to obtain $\widetilde{C}$, $(\widetilde{C},|s_H|)$ will be stable, in $\qg1d$.)
\end{defn}

As above, for any choices of $H, H', \Penab, \Penabprime$, $\Dfdn(H, \Penab)$ is linearly equivalent to $\Dfdn(H', \Penabprime)$.   We will write simply $D_{fd}$ for the linear equivalence class.

We will see that any two of these three divisors are not numerically equivalent, hence not linearly equivalent.  So the line bundles associated to any pair of these divisors generates $\textrm{Pic}(\qpnd) \otimes \Q = H^2(\qpnd, \Q)$.

In Section \ref{sec:FixedLoci}, we introduced the inclusion $i_{0}: \qg1d \rightarrow \qpnd$.  The induced map $i_{0}^*: H^2(\qpnd, \Q) \rightarrow H^2(\qg1d, \Q)$ sends $\Djn$ to $\Dj1$ and $\Dbn$ to $\Db1$, hence is an isomorphism of vector spaces.  Dually, the induced map $i_{0 *} : H_2(\qg1d,\Q) \rightarrow H_2(\qpnd, \Q)$ is an isomorphism.

\subsection{Construction of Curves} \label{sec:Curves}
We begin by defining curves in $\qg1d$.

\begin{defn}
Fix a smooth genus 1 curve $C$ and an effective divisor $\tilde{D}$ on $C$ of degree $d-1$.  Let $(C, D) \in \qg1d$ be in $\gamma_j(C,\tilde{D})$ if $D = \tilde{D} + p$ for some point $p \in C$.
\end{defn}
In other words, $\gamma_j(C,\tilde{D})$ is obtained by fixing $d-1$ points on $C$ and letting the $d^{th}$ point move.  In particular, $\gamma_j(C,\tilde{D}) \simeq C$.  As before, all choices of $C, \tilde{D}$ give a curve in the same linear equivalence class, and we will write $\gamma_j$ for the linear equivalence class.

\begin{defn}
Fix a pencil of plane cubics $\Pen$ and a base point $a$ of the pencil.  Let $\gamma_t(\Pen, a)$ be the family $(\Pen_t, d \cdot a)$.
\end{defn}
As before, all choices give a curve in the same linear equivalence class, and we will write $\gamma_t$ for the linear equivalence class.

$\gamma_j$ and $\gamma_t$ generate $H_2(\qg1d, \Q)$.  Their images under the induced map $i_{0 *} : H_2(\qg1d,\Q) \rightarrow H_2(\qpnd, \Q)$ are curves generating $H_2(\qpnd, \Q)$.  Let $\gamma_j^n = i_{0 *} \gamma_j^1$ and $\gamma_t^n = i_{0 *} \gamma_t^1$.  When there is no source of confusion, we drop the subscripts.  As the inclusion $i_0$ induces isomorphisms on both the homology and cohomology of $\qg1d$ with $\qpnd$, all intersection numbers computed in $\qg1d$ are also valid in $\qpnd$, so for simplicity we will always compute them in $\qg1d$.

The intersection multiplicities between the divisors and curves defined on $\qpnd$ are given in the following table.

\begin{equation} \label{Intersections}
  \begin{tabular}{ | l | c | c | r | }
    \hline
    &  $\gamma_j$ & $\gamma_{t}$ \\ \hline
    $D_b$    & $d-1$     & $-\frac{d(d-1)}{2}$ \\ \hline
    $D_j$    & 0       & 12  \\ \hline
    $D_{fd}$ & $24(d-1)$ & 0\\
    \hline
  \end{tabular}
\end{equation}

\begin{proof}
The intersections are straightforward to compute, with the exception of $\gamma_t \cdot D_b$, which we do here.

Recall that $\qg1d$ is the quotient of $\Mdepsilon2$ by $S_d$, let $\varphi: \Mdepsilon2 \rightarrow \qg1d$ denote the quotient map.  The preimage of $D_b$ under this map is $\coprod_{i < j} \Delta_{i j}$, where $\Delta_{i j}$ is the divisor where $p_i = p_j$ in $\Mdepsilon2$.

We defined a pencil $\Pen \rightarrow B = \p^1$ with $d$ sections, namely $d$ times the section $\sigma$ given by a base point $a$.  By the universal property of $\qg1d$, this defines a map $\mu: B \rightarrow \qg1d$, and we defined the curve $\gamma_t$ as the image $\mu(B)$.  By definition

$$\gamma_t \cdot D_b = c_1(\mu^* \N_{D_b/\qg1d}).$$

By the universal property of $\Mdepsilon2$, $\mu$ in fact factors through $\Mdepsilon2$, and we have $\mu': B \rightarrow \Mdepsilon2$.  Let us write $C_t$ for the image $\mu'(B)$ in $\Mdepsilon2$.  

So
$$c_1(\mu^* \N_{D_b/\qg1d}) = c_1(\mu'^* \varphi^* \N_{D_b/\qg1d}) = c_1(\mu'^* \coprod_{i<j} \N_{\Delta_{i j}/\Mdepsilon2} ) = \frac{d(d-1)}{2} c_1(\mu'^* \N_{\Delta_{1 2}/\Mdepsilon2})).$$

It remains to compute the degree of $\N_{\Delta{1 2}}$ on $C_t$.  In the remainder, we denote $\Delta_{1 2}$ simply by $\Delta$ and $\Mdepsilon2$ simply by $M$.

First, we reduce the problem to one involving only the vertical tangent space for the map $\Mdepsilon2 \rightarrow \Moobar$, that is, we avoid deformations that change the modulus of the underlying curve.

$$
\begin{CD}
@. 0 @. 0 @. \\
@. @VVV @VVV @. \\
0 @>>> verT_{\Delta} @>>> verT_{M} @ >>> \NN_{\Delta / M} @ >>> 0\\
@. @VVV @VVV @. \\
0 @>>> T_{\Delta} @>>> T_{M} @ >>> \N_{\Delta / M} @ >>> 0\\
@. @VVV @VVV @. \\
0 @>>> T_{\Moobar} @>>> T_{\Moobar} @ >>> 0 @ >>> 0\\
@. @VVV @VVV @. \\
@. 0 @. 0 @. \\
\end{CD}
$$

By the snake lemma, $\N_{\Delta/M} = \NN_{\Delta/M}$.  To compute $\NN_{\Delta / M}$, we use the fact that the sum of the tangent spaces at each marked point surjects onto $verT_{\Delta}$.

$$
\begin{CD}
@. 0 @. 0 @. \\
@. @VVV @VVV @. \\
0 @>>> K_1 @>\simeq >> K_2 @>>> 0 @>>> 0\\
@. @VVV @VVV @. \\
0 @>>> \oplus_{i=2}^d T_{p_i} @>\Delta \oplus id^{d-2} >> \oplus_{i=2}^d T_{p_i} @ >>> Q @ >>> 0\\
@. @VVV @VVV @. \\
0 @>>> verT_{\Delta} @>>> verT_{M} @ >>> \NN_{\Delta / M} @ >>> 0\\
@. @VVV @VVV @. \\
@. 0 @. 0 @. \\
\end{CD}
$$

Again by the snake lemma, $\NN_{\Delta /M}$ is isomorphic to the quotient $Q$.  Finally we can compute $$\N_{\Delta/M} |_{C_t} = Q |_{C_t} = \bigoplus_{i = 1}^{d} T_{p_i} |_{C_t} - \bigoplus_{i = 2}^{d} T_{p_i} |_{C_t} = T_{p_1} |_{C_t} = -\psi_1 |_{C_t} = -1,$$
where $\psi_1$ is the standard cotangent class to $p_1$.

We conclude $c_1(\mu'^* \N_{\Delta_{1 2}/\Mdepsilon2}))=-1$ and $$c_1(\mu^* \N_{D_b/\qg1d}) = c_1(\mu'^* \coprod_{i<j} \N_{\Delta_{i j}/\Mdepsilon2} ) = -\frac{d(d-1)}{2},$$ as desired.
\end{proof}

We deduce from Table \ref{Intersections} that

\begin{lem} \label{divisor relation}
$D_{fd} = d(d-1)D_j + 24 D_b.$
\end{lem}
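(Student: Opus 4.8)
The plan is to use that $\mathrm{Pic}(\qg1d)\otimes\Q \cong H^2(\qg1d,\Q)$ is two-dimensional (Theorem \ref{PicardRank}): once $D_j$ and $D_b$ are seen to be linearly independent they form a basis, and then $D_{fd}$ is pinned down by its intersection numbers against the curves $\gamma_j,\gamma_t$, which span $H_2(\qg1d,\Q)$. So the lemma is pure linear algebra on top of the intersection numbers already assembled in Table \ref{Intersections} (whose one nontrivial entry, $\gamma_t\cdot D_b$, has just been computed); no new geometric input is needed.

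First I would record that the intersection pairing $H^2(\qg1d,\Q)\times H_2(\qg1d,\Q)\to\Q$ is perfect — Poincar\'e duality on the smooth proper orbifold, equivalently on its coarse moduli scheme — and that $\gamma_j,\gamma_t$ generate $H_2(\qg1d,\Q)$. Reading off the relevant block of Table \ref{Intersections}, the matrix of pairings of $(D_j,D_b)$ against $(\gamma_j,\gamma_t)$ is
\[
\begin{pmatrix} D_j\cdot\gamma_j & D_j\cdot\gamma_t \\ D_b\cdot\gamma_j & D_b\cdot\gamma_t \end{pmatrix}
= \begin{pmatrix} 0 & 12 \\ d-1 & -\tfrac{d(d-1)}{2} \end{pmatrix},
\]
whose determinant is $-12(d-1)\neq 0$ for $d\geq 2$. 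Hence $D_j$ and $D_b$ are a basis of $\mathrm{Pic}(\qg1d)\otimes\Q$, and the pairing restricted to this basis and to $\{\gamma_j,\gamma_t\}$ is nondegenerate.

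Next I would write $D_{fd}=aD_j+bD_b$ with $a,b\in\Q$ and solve the resulting $2\times 2$ system: intersecting with $\gamma_j$ gives $24(d-1)=a\cdot 0+b(d-1)$, so $b=24$; intersecting with $\gamma_t$ gives $0=12a-\tfrac{d(d-1)}{2}\cdot 24=12a-12\,d(d-1)$, so $a=d(d-1)$. This yields $D_{fd}=d(d-1)D_j+24D_b$ in $\mathrm{Pic}(\qg1d)\otimes\Q$ (equivalently up to numerical equivalence, which over $\Q$ coincides with linear equivalence here since $H^{2,0}=0$). The case $d=1$ is degenerate — $D_b$ and $d(d-1)$ both vanish — and needs no argument.

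I do not expect a real obstacle in this lemma: the substantive content lives entirely in Table \ref{Intersections} (in particular the snake-lemma computation of $\N_{\Delta/M}|_{C_t}$ giving $\gamma_t\cdot D_b=-\tfrac{d(d-1)}{2}$), and what remains is the elementary linear algebra above plus the observation that the rank-2 intersection lattice makes the solution unique. The only point worth stating carefully is the nondegeneracy of the displayed matrix, which is what licenses reading the divisor class off its curve intersections.
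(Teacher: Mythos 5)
Your proof is correct and is exactly what the paper means by ``We deduce from Table \ref{Intersections} that\dots'': the paper offers no further argument beyond the table, and your explicit linear-algebra step (nondegeneracy of the pairing matrix for $(D_j,D_b)$ against $(\gamma_j,\gamma_t)$, then solving the $2\times 2$ system for the coefficients of $D_{fd}$) is the intended deduction. The arithmetic checks out: $b(d-1)=24(d-1)$ gives $b=24$, and $12a-12d(d-1)=0$ gives $a=d(d-1)$.
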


\subsection{Nef Cone}
Recall that on a variety $X$, a divisor $D$ is called nef if $D \cdot C = 0$ for every irreducible curve $C \subset X$.  To show $D$ is nef, it is sufficient to prove that it is base point free, because given any curve $C$ and a point $x \in C$, if $D$ is base point free, there is $D'$ numerically equivalent to $D$ such that $D'$ does not contain $x$, so it does not contain $C$, so $D \cdot C = D' \cdot C \geq 0$.

\begin{thm} \label{NefCone}
The nef cone of $\qpnd$ is bounded by the divisors $D_j$ and $D_{fd}$.
\end{thm}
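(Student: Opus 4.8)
By Theorem~\ref{PicardRank} the Picard rank of $\qpnd$ is $2$, so its nef cone is a two-dimensional closed convex cone in $\mathrm{Pic}(\qpnd)_{\Q}$, and to describe it one only needs to produce two nef classes lying on its two boundary rays. The plan is: (i) show that $\Dj$ and $\Dfd$ are both nef; (ii) observe that together they span $\mathrm{Pic}(\qpnd)_{\Q}$; and (iii) show that an arbitrary nef class is a nonnegative combination of them by pairing against the curves $\gamma_j$ and $\gamma_t$ of Section~\ref{sec:Curves}. Throughout I assume $d\geq 2$; in the case $d=1$ one has $\qpnd\cong\p^1\times\pnmo$ (see the Introduction) and the statement is classical, with $\Dj$ and $\Dfd$ the pullbacks of a hyperplane from the two factors.

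For step (i), nefness of $\Dj$ is immediate: the $j$-invariant defines a morphism $\qpnd\to\Moobar\cong\p^1$, and $\Dj$ is the preimage of a point, hence base-point-free. Nefness of $\Dfd$ is the substantive point, and the plan is to show it is base-point-free. First I would show the linear system $\{\Dfd1(\Penab)\}$ is base-point-free on $\qg1d$: given $(C,D)\in\qg1d$, for each component $\widetilde{C}$ of $C$ and each value of the $j$-invariant there are only finitely many isomorphisms from $\widetilde{C}$ to a fiber of a pencil of plane cubics realizing that $j$-value (a bound uniform in the pencil), while $D|_{\widetilde{C}}$ has at most $d$ points; hence, as the pencil $\mathcal{P}$ and its two base points $a,b$ vary, the locus of $(\mathcal{P},a,b)$ for which some such isomorphism carries two points of $D|_{\widetilde{C}}$ onto $a$ and $b$ is a proper closed subset, so a general $(\mathcal{P},a,b)$ gives a representative $\Dfd1(\Penab)$ avoiding $(C,D)$. (Equivalently this is base-point-freeness of $B_{\Penab}$ on the surface $\overline{M}_{1,0|2}$, pulled back along $\pi$ and transported through the finite map $\varphi$.) Then for $n\geq 2$: given $(C,[s_0,\dots,s_{n-1}])\in\qpnd$, the sections $s_i$ do not all vanish, so there is a hyperplane $H$ with $s_H\neq 0$; applying the previous step to $(\widetilde{C},|s_H|)\in\qg1d$ produces a pencil $\Penab$ with $(C,[s_0,\dots,s_{n-1}])\notin\Dfdn(H,\Penab)$. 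So $\Dfdn$ is base-point-free, in particular nef.

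For step (ii), $\Dj$ and $\Dfd$ are linearly independent in $\mathrm{Pic}(\qpnd)_{\Q}$: by the intersection table~(\ref{Intersections}) their intersection vectors against $(\gamma_j,\gamma_t)$ are $(0,12)$ and $(24(d-1),0)$, which are independent for $d\geq 2$, so since the Picard rank is $2$ they form a $\Q$-basis. (Alternatively, $\Dj$ and $\Db$ generate by construction and $\Dfd=d(d-1)\Dj+24\Db$ by Lemma~\ref{divisor relation}.) For step (iii), let $D$ be nef and write $D=x\Dj+y\Dfd$ with $x,y\in\Q$. Pairing with the irreducible curves $\gamma_j$ and $\gamma_t$ and reading off~(\ref{Intersections}) gives $D\cdot\gamma_t=12x\geq 0$ and $D\cdot\gamma_j=24(d-1)y\geq 0$, so $x,y\geq 0$ and $D$ lies in the cone spanned by $\Dj$ and $\Dfd$. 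Combined with step (i) (which gives the reverse inclusion, the nef cone being convex), this identifies the nef cone of $\qpnd$ with the cone spanned by $\Dj$ and $\Dfd$; moreover neither generator is ample, since $\Dj\cdot\gamma_j=0$ and $\Dfd\cdot\gamma_t=0$, so these are genuinely the boundary rays.

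The main obstacle is the base-point-freeness claimed in step (i) for $\Dfd$ — equivalently, of $B_{\Penab}$ on the surface $\overline{M}_{1,0|2}$. This rests on the classical geometry of a pencil of plane cubics and its nine base points, together with a dimension count showing that the pointed pencils whose fibers realize a prescribed marked genus one curve form a proper subvariety of the parameter space of such pencils; once that is established, the reductions along $\pi$ and $\varphi$ and the choice of $H$ for $n\geq 2$ are formal, and steps (ii) and (iii) are routine linear algebra in the rank-$2$ N\'eron--Severi lattice.
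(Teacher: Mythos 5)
Your proposal is correct and follows essentially the same route as the paper: show $D_j$ is nef because it is pulled back from the $j$-line (equivalently, $D_j(\rho)$ for $\rho \neq j(C)$ misses any given point); show $D_{fd}$ is nef by base-point-freeness, which reduces first to $n=1$ via a hyperplane $H$ with $s_H \neq 0$ and then to the observation that $\pi(\varphi^{-1}(y))$ is a finite subset of the surface $\overline{M}_{1,0|2}$ so a general $B_{\Penab}$ avoids it; and conclude using the intersection numbers $D_j \cdot \gamma_j = 0$ and $D_{fd} \cdot \gamma_t = 0$ against the Picard-rank-2 fact. Your step (iii) is just a slightly more explicit phrasing of why these two nef classes with vanishing pairings against effective curves must be the two boundary rays, and is interchangeable with the paper's shorter wording.
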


\begin{proof}
We begin by showing $D_j$ is base point free, hence nef.  Take any point $x = (C, [s_0,...,s_{d-1}]) \in \qpnd$.  Fix $\rho$ not equal to the $j$-invariant of $C$.  Then $D_j(\rho)$ is a representative of the class $D_j$ which does not contain $x$.

As computed in Table \ref{Intersections}, $D_j \cdot \gamma_j = 0$, and $\gamma_j$ is an effective curve.  As $D_j$ is nef and has zero intersection with an effective curve, it is on the boundary of the nef cone.

Now we show that $D_{fd}$ is also base point free.  Again take any point $x = (C, [s_0,...,s_{d-1}]) \in \qpnd$.  At least one $s_i \neq 0$.  Let $H$ be the hyperplane $x_i = 0$ in $\pnmo$.  Let $y := (C, |s_H|) \in \qg1d$.  Using again the maps from diagram \ref{UsefulMdepsilon2}, $\pi(\varphi^{-1}(y))$ is a finite set in $\overline{M}_{1,0|2}$.  By a dimension count, we can choose a pencil of plane cubics $\Pen$ and two basepoints $a,b$ of $\Pen$ such that the associated curve $B_{\Penab} \subset \overline{M}_{1,0|2}$ does not meet $\pi(\varphi^{-1}(y))$.  For such a choice of $\Penab$, $D_{fd}(H,\Penab)$ is a representative of the class $D_{fd}$ which does not contain the point $x$.  Hence $D_{fd}$ is base point free, hence nef.

Again, it was computed in Table \ref{Intersections}, that $D_{fd} \cdot \gamma_{t} = 0$.  As $D_{fd}$ is nef and has zero intersection with an effective curve, it is on the boundary of the nef cone.
\end{proof}

\subsection{Projectivity}
The above analysis of the cone of nef divisors and its dual, the cone of effective curves, yields a simple proof of the projectivity of $\qpnd$.

\begin{cor} \label{Projective}
$\qpnd$ is a projective scheme.
\end{cor}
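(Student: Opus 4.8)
The plan is to build a finite morphism from $\qpnd$ to a projective space and invoke the fact that a scheme admitting a finite morphism to a projective scheme is projective. First I would dispose of the trivial case $d=1$, where $\qpnd \simeq \p^1 \times \pnmo$ is visibly projective, and from then on assume $d \geq 2$. Set $D := D_j + D_{fd}$. In the proof of Theorem \ref{NefCone} it is shown that $D_j$ and $D_{fd}$ are each base point free, hence so is $D$, so for $m \gg 0$ the linear system $|mD|$ defines a morphism $g : \qpnd \to \p^N$ with $g^*\mathcal{O}_{\p^N}(1) = \mathcal{O}(mD)$. Since $\qpnd$ is proper \cite{MOP}, $g$ is proper, so to see that $g$ is finite it is enough to show that it is quasi-finite, i.e.\ that it contracts no curve.

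The key step is to rule out contracted curves using the known shape of the cones. Suppose $g$ contracted an integral curve $C \subset \qpnd$. Then $mD\cdot C = 0$, and as $D_j$ and $D_{fd}$ are both nef this forces $D_j\cdot C = D_{fd}\cdot C = 0$. By Theorem \ref{NefCone} the nef cone of $\qpnd$ is exactly the cone spanned by $D_j$ and $D_{fd}$; since $\mathrm{Pic}(\qpnd)_\Q \simeq H^2(\qpnd,\Q)$ has rank $2$ (Theorem \ref{PicardRank}) and pairs perfectly with $N_1(\qpnd)_\Q \simeq H_2(\qpnd,\Q)$, the dual cone $\overline{NE}(\qpnd)$ is the cone spanned by the two dual rays, which by Table \ref{Intersections} are represented by the effective curves $\gamma_j$ (with $D_j\cdot\gamma_j = 0$, $D_{fd}\cdot\gamma_j = 24(d-1) > 0$) and $\gamma_t$ (with $D_{fd}\cdot\gamma_t = 0$, $D_j\cdot\gamma_t = 12 > 0$). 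Hence $[C] = a[\gamma_j] + b[\gamma_t]$ with $a,b \geq 0$, and $0 = D\cdot C = 24(d-1)a + 12b$ forces $a = b = 0$. So $[C] = 0$ in $N_1(\qpnd)_\Q$, which is impossible for an integral curve on the smooth $\qpnd$: a prime divisor through a point of $C$ that does not contain $C$ meets it positively. This contradiction shows $g$ is quasi-finite, hence finite, so $\mathcal{O}(mD) = g^*\mathcal{O}_{\p^N}(1)$ is ample and $\qpnd$ is projective.

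The main obstacle is exactly this key step, and concretely it is the need to exclude contracted curves lying in the single fiber of $\pi : \qpnd \to \Moobar$ over the nodal curve — the fiber whose geometry is least understood (cf.\ the introduction). What lets the argument succeed without analyzing that fiber directly is that the \emph{global} nef cone, and therefore the \emph{global} Mori cone $\overline{NE}(\qpnd) = \langle \gamma_j,\gamma_t\rangle$, is already pinned down by Theorem \ref{NefCone} together with the intersection numbers of Table \ref{Intersections}; it is essential that $D$ meets both generators $\gamma_j$ and $\gamma_t$ strictly positively, since on a proper scheme a nef class in the interior of the nef cone need not be ample. (One could instead try a fiberwise argument: $D_{fd}$ restricts to an ample class on each fiber $\p^{nd-1}/G$ of $\pi$ over a smooth elliptic curve, so $g$ is finite there, reducing to the one problematic fiber — but the cone computation bypasses this fiber altogether.)
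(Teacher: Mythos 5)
Your proposal is correct and uses the same cone-theoretic inputs as the paper (the divisor $D = D_j + D_{fd}$, base point freeness of each summand from the proof of Theorem \ref{NefCone}, the intersection numbers in Table \ref{Intersections}, and the dual description of $\overline{NE}(\qpnd)$), but the final logical step is a genuine refinement of what the paper writes. The paper simply checks that $D$ is strictly positive on $\overline{NE}(\qpnd)\setminus\{0\}$ and then asserts that $D$ is ample --- implicitly invoking the converse direction of Kleiman's criterion. That direction of Kleiman is stated for \emph{projective} schemes; using it to conclude projectivity is formally circular, and indeed for merely proper schemes there exist counterexamples (nef and positive on all effective curve classes need not imply ample). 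You explicitly flag this subtlety and route around it: $D$ is base point free, so it defines a proper morphism $g$ to projective space; the cone computation shows $g$ contracts no curve; proper plus quasi-finite gives finite; and a scheme finite over a projective scheme is projective. This supplies exactly the piece the paper leaves implicit, and all the hypotheses you use are already established in Theorems \ref{PicardRank} and \ref{NefCone} and Table \ref{Intersections}.

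One small remark on your own argument: your justification that $[C]\neq 0$ in $N_1$ for an integral curve (``a prime divisor through a point of $C$ not containing $C$ meets it positively'') quietly assumes the existence of such a divisor, which is automatic on a quasi-projective scheme but is precisely what is at issue on a proper scheme not yet known to be projective. You can sidestep this entirely: once $D_j\cdot C = D_{fd}\cdot C = 0$ and $D_j, D_{fd}$ span $N^1_\Q$, you already have $D'\cdot C = 0$ for \emph{every} divisor class $D'$, so in particular $D_b\cdot C = 0$; but the proof of Theorem \ref{EffCone} exhibits a moving curve class $\gamma_{nt}$ disjoint from $D_b$ passing through a general point, and more directly, the argument that $g$ has no positive-dimensional fiber can be run through Stein factorization without ever needing the auxiliary claim $[C]\neq 0$. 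This is a cosmetic point; the overall structure of your proof is sound and more rigorous than the version in the paper.
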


\begin{proof}
If $d = 1$, $\overline{Q}_1 (\pnmo,1) \simeq \pnmo \times \Moobar$ \cite{MOP} Section 2.5.  Hence for $d=1$ $\qpnd$ is projective.

Now assume $d>1$.  Let $D = D_j + D_{fd}$.  We show $D$ is ample, by checking positivity on the closure of the cone of effective curves.  As $\gamma_j$ and $\gamma_t$ span the cone of effective curves, any curve class in the closure of the cone of effective curves can be represented as $E_{a,b} = a \gamma_j + b \gamma_t$ for $a,b$ each nonnegative and not both zero.  By the computation in Table \ref{Intersections}, $D \cdot E_{a,b} = 24(d-1)a + 12b$, which is strictly positive by our assumptions on $(a,b)$ and the assumption $d>1$.  We conclude $D$ is ample, and hence $\qpnd$ is projective for all $d$.
\end{proof}

\subsection{Effective Cone}
Boucksom, Demailly, Paun and Peternell showed that for a projective variety (more generally, compact K\"{a}hler manifold) the cone of pseudo-effective divisors is dual to the cone of moving curves \cite{BDPP}.  For a variety $X$, a moving curve is a linear equivalence class of curves $[\gamma]$ such that for any point $x \in X$, there is a representative $\gamma$ of $[\gamma]$ such that $x \in \gamma$.

\begin{thm} \label{EffCone}
The effective cone of $\qpnd$ is bounded by the divisors $D_j$ and $D_b$.
\end{thm}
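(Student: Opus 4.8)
The plan is to combine the two-dimensionality of $N^1(\qpnd)_{\R}$ (Theorem \ref{PicardRank}) with the Boucksom--Demailly--Paun--Peternell duality between the closed cone of effective (pseudo-effective) divisors $\overline{\mathrm{Eff}}$ and the cone of movable curves \cite{BDPP}. Since $\qpnd$ is projective (Corollary \ref{Projective}), $\overline{\mathrm{Eff}}$ is a pointed cone in a plane, hence has exactly two extremal rays. The divisors $D_j$ and $D_b$ are honest nonempty codimension-one subvarieties, so $\overline{\mathrm{Eff}} \supseteq \langle D_j, D_b \rangle$, and by the table (\ref{Intersections}) their classes are independent ($D_j \cdot \gamma_j = 0 \neq d-1 = D_b \cdot \gamma_j$). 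Thus the theorem is equivalent to the assertion that $D_j$ and $D_b$ each lie on the boundary of $\overline{\mathrm{Eff}}$, i.e. are not big. (The case $d=1$ is handled separately, where $\qpnd \simeq \Moobar \times \pnmo$ and both cones are classical.)

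For $D_j$: the $j$-invariant gives a morphism $\pi \colon \qpnd \to \Moobar$ onto a curve with connected fibers, so $\pi_* \OO_{\qpnd} = \OO_{\Moobar}$, and $D_j$ is the pullback of the class of a point. Hence $H^0(\qpnd, \OO(mD_j)) = H^0(\Moobar, \OO(m\,\mathrm{pt}))$ for all $m$, so $\kappa(D_j) \le 1 < nd = \dim \qpnd$. Therefore $D_j$ is not big, i.e. $D_j \notin \mathrm{int}\,\overline{\mathrm{Eff}}$, and since $D_j \neq 0$ the ray $\R_{\ge 0} D_j$ is one of the two extremal rays of $\overline{\mathrm{Eff}}$.

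For $D_b$: I would exhibit a \emph{movable} curve class $\gamma$ with $D_b \cdot \gamma = 0$. By (\ref{Intersections}) and the rank being two, such a $\gamma$ must be a positive multiple of $d\gamma_j + 2\gamma_t$, so in particular $D_j \cdot \gamma > 0$. Granting this, BDPP duality gives $D \cdot \gamma \ge 0$ for every pseudo-effective $D$; writing $D \equiv x D_j + y D_b$ and using $D_b \cdot \gamma = 0$ this says $x\,(D_j\cdot\gamma) \ge 0$, so $x \ge 0$. Hence $\overline{\mathrm{Eff}}$ lies in the half-plane $\{x \ge 0\}$, whose boundary line $\{x=0\}$ meets $\overline{\mathrm{Eff}}$ in the effective, nonzero class $D_b$; as $\overline{\mathrm{Eff}}$ is a pointed planar cone, this intersection is an extremal ray, so $\R_{\ge 0}D_b$ is the second extremal ray and $\overline{\mathrm{Eff}} = \langle D_j, D_b\rangle$.

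It remains to produce $\gamma$, which is the crux. For $n \ge 2$ I would take a general rational elliptic surface $S \to \p^1$ (whose $j$-map has degree $12$, giving $D_j \cdot \gamma = 12$), fix a hyperplane $H \subset \pnmo$, and extend a general degree-$d$ morphism $S_{t_0} \to \pnmo$ to a family of morphisms $f_t \colon S_t \to \pnmo$ over $\p^1$ with $f_t(S_t) \cap H$ consisting of $d$ distinct points for every $t$; the curve $\gamma = \{(S_t, f_t)\}$ is then disjoint from $D_b$, so $D_b \cdot \gamma = 0$, and as the elliptic surface and the extension vary, this family dominates $\qpnd$ (any $(C,f)$ is realized, since $C$ appears as a fiber and $f$ extends to $S$ after twisting by fibers), so $\gamma$ is movable. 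For $n = 1$ there is no map to deform and one must instead dominate $\qg1d$ by a family of degree-$d$ configurations of points on genus-one curves that never collide: this can be done directly with pencils of plane cubics through distinct base points when $d$ is small, and in general either by pushing a suitable covering family of sections off the discriminant in the generically finite $\p^{d-1}$-bundle $\p(\pi_* \OO_\CC(d\sigma))$ attached to a cubic pencil, or by deducing the $n=1$ case from the $n \ge 2$ case through the Bia{\l}ynicki--Birula affine bundle that retracts the big open cell of the reversed $\Cstar$-action onto $X_0 \simeq \qg1d$ together with the isomorphism $i_0^*$. The delicate point throughout is verifying that a collision-free family can be chosen to still dominate, equivalently that $\overline{\mathrm{Eff}}$ does not extend past $D_b$.
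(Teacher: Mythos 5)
Your overall strategy (rank two plus BDPP duality, then identify the two boundary rays) matches the paper's, and your treatment of $D_j$ is a legitimate, arguably slicker alternative: the paper instead verifies directly that the class $\gamma_j$ (vary one of the $d$ marked points on a fixed smooth genus-one curve) is a moving curve with $D_j \cdot \gamma_j = 0$, while you observe that $D_j$ is a fiber of the $j$-map to $\Moobar$ and so has Iitaka dimension at most $1 < nd$; both are fine, and yours needs no intersection computation.

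The problem is $D_b$, and you have named the gap yourself: you never actually produce a moving curve $\gamma$ with $D_b \cdot \gamma = 0$, you only indicate families that might work and explicitly defer "the delicate point" of showing they dominate. The elliptic-surface sketch for $n\ge 2$ is the wrong place to look for an unconditional argument: for a rational elliptic surface $S \to \p^1$ with a morphism $f\colon S \to \pnmo$, there is no a priori control that $f|_{S_t}$ meets a fixed hyperplane $H$ in $d$ distinct points for \emph{every} $t$ (the base-changed curve will generically hit the ramification locus of $f|_{S_t}$ over $H$ somewhere), and the assertion that "any $(C,f)$ is realized... after twisting by fibers" is precisely the statement that needs proof. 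For $n=1$ you offer several possible strategies but commit to none of them. The paper's resolution is elementary and explicit and stays inside $\qg1d$: take the Hilbert scheme of smooth elliptic normal curves of degree $m+1$ in $\p^m$; passing through a fixed point imposes $m-1$ conditions, so if $m \ge d-3$ one can still find a positive-dimensional family of such curves through $d$ general points $z_1,\dots,z_d$, and the induced curve $\gamma_{nt}$ in $\qg1d$ lies entirely in the locus where the $d$ marked points are distinct, hence $\gamma_{nt}\cdot D_b = 0$. It is moving because any general $(C,D)$ with $D$ reduced can be embedded as such an elliptic normal curve with $D$ landing on $\{z_i\}$. The crucial idea you were missing is this freedom to choose the target $\p^m$ with $m$ large, rather than trying to extend a fixed map $C \to \pnmo$ over an entire pencil.

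Finally, a small remark: your intermediate claim that a movable $\gamma$ with $D_b\cdot\gamma = 0$ must be proportional to $d\gamma_j + 2\gamma_t$ is a correct consequence of the rank-two setup and the intersection table, but it is not needed for the argument; any moving curve with $D_b\cdot\gamma = 0$ and $D_j\cdot\gamma > 0$ suffices, and the latter positivity can be read off as you do.
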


\begin{proof}
By construction, the divisors $D_j$ and $D_b$ are effective.  To show that in fact they lie on the boundary, we demonstrate for each a moving curve that has intersection number 0 with the divisor.  Note that to show a curve class $[\gamma]$ is moving, it suffices to show that through any \textit{general} point $y \in X$ there is a representative $\gamma$ such that $y \in \gamma$.  This is because for any $x \in X$, we can take a 1-parameter family of general points $y_t \rightarrow x$ as $t \rightarrow 0$.  If for all $t \neq 0$ there is a curve $\gamma_t$ in the class $[\gamma]$ through $y_t$, then the limit of these curves $\gamma_0$ contains $x = y_0$ and is also in the class $[\gamma]$.

For $D_j$, the curve $\gamma_j$ serves this purpose.  In Section \ref{sec:Curves}, we saw that $\gamma_j \cdot D_j = 0$.  It remains to see that $\gamma_j$ is a moving curve.  Take a general point $(C, D) \in \qpnd$, and by generality assume $C$ is smooth.  Throw away a point of $D$ and let $\tilde{D}$ be the resulting divisor of degree $d-1$.  The representative $\gamma_j(C,\tilde{D})$ contains $(C,D)$, so $\gamma_j$ is a moving curve, as desired.

For the divisor $D_b$, we demonstrate the existence of a moving curve which does not intersect it.  In the Hilbert scheme of degree $m+1$ curves $\xi$ in $\p^m$, let $X$ be the open subscheme where the curve is smooth.  $X$ has dimension $(m+1)^2$.  For any point $z \in \p^m$, let $X_p$ be the subscheme of $X$ of curves $\xi$ passing through $z$.  $X_p$ has codimension $m-1$ in $X$.  Take $d$ general points $z_1,...,z_d$, and let $X_{z_1,...,z_d} = X_{z_1} \bigcap ... \bigcap X_{z_d}$.  If we take $m \geq d-3$, $X_{z_1,...,z_d}$ will have dimension $\geq 4$, in particular, it will be nonempty.

Take a curve $\gamma \subset X_{z_1,...,z_d}$.  Restricting the universal family of the Hilbert scheme to $\gamma$ gives is a 1 dimensional family of smooth genus 1 curves in $\p^m$, each with $d$ \textit{distinct} marked points $z_1,...,z_d$.  By the universal property of $\qg1d$, this maps to a curve in $\qg1d$, and let $\gamma_{nt} \subset \qg1d$ be its closure.

For all $(C,D) \in \gamma_{nt}$, $D$ is an effective divisor of $d$ distinct points, so $\gamma_{nt} \cdot D_b = 0$.  It remains to see that  $\gamma_{nt}$ is a moving curve.  Fix any general point $(C,D) \in \qg1d$.  By genericity, $C$ is smooth and $D$ consists of distinct points.  Pick any embedding of $i: C \hookrightarrow \p^m$, and label the points of $i(D)$ as $z_1,...,z_d$.  Now the image $\gamma_{nt}$ of any $\gamma \subset X_{z_1,...,z_d}$ contains $(C,D)$, as desired.  We conclude that $D_b$ is a boundary of the effective cone of $\qpnd$.
\end{proof}

\subsection{Canonical Divisor}

\begin{thm} \label{Canonical}
$K_{\qpnd}  =  \frac{\left( d-11 + (d-1)(n-1) \right)}{12} D_j - n D_b.$
\end{thm}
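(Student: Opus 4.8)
The plan is to determine $K_{\qpnd}$ numerically. By Theorem \ref{PicardRank} the rational Picard group has rank $2$, and for $d\geq 2$ (the case $d=1$, where $\qpnd\simeq\p^1\times\pnmo$, is immediate) the relation $D_{fd}=d(d-1)D_j+24D_b$ of Lemma \ref{divisor relation} together with the intersection table \ref{Intersections} shows that $\{D_j,D_b\}$ is a $\Q$-basis of $\operatorname{Pic}(\qpnd)\otimes\Q$ and that the $2\times 2$ pairing of this basis against the curve classes $\gamma_j,\gamma_t$ is non-degenerate (its determinant is $-12(d-1)$). So it suffices to write $K_{\qpnd}=a\,D_j+b\,D_b$ and compute the two numbers $K_{\qpnd}\cdot\gamma_j$ and $K_{\qpnd}\cdot\gamma_t$; then $a$ and $b$, hence the asserted formula, follow by inverting that matrix. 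I would organize the computation by first doing the case $n=1$ and then bootstrapping to arbitrary $n$.

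\textbf{The case $n=1$.} Here one computes $K_{\qg1d}\cdot\gamma_j$ and $K_{\qg1d}\cdot\gamma_t$ by adjunction on the explicit families defining these curves. The curve $\gamma_j=\gamma_j(C,\tilde D)\simeq C$ is a smooth genus $1$ curve, so $\deg K_{\gamma_j}=0$ and $K_{\qg1d}\cdot\gamma_j=-\deg\det N_{\gamma_j/\qg1d}$; this normal-bundle degree is read off from the deformation theory of $(C,\tilde D+p)$ as $p$ varies (equivalently, apply Grothendieck--Riemann--Roch to the classifying morphism, whose universal curve is $C\times C$). The curve $\gamma_t\simeq\p^1$ is the base of the chosen pencil of plane cubics, whose universal curve is the rational elliptic surface $Y$ obtained by blowing up $\p^2$ at the nine base points; on $Y$ the fiber, section, and canonical classes are classical, and $\deg K_{\gamma_t}=-2$, so $K_{\qg1d}\cdot\gamma_t=-2-\deg\det N_{\gamma_t/\qg1d}$ is again a short intersection/GRR computation on $Y$. (An alternative for this step is to use $\qg1d=\Mdepsilon2/S_d$: pull $K_{\qg1d}$ back along the degree-$d!$ quotient map, relate it to the canonical class of the Hassett space $\Mdepsilon2$ and the ramification divisor supported on the big diagonal $\bigcup_{i<j}\Delta_{ij}$, whose image is $D_b$, and push forward.)

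\textbf{From $n=1$ to general $n$.} The inclusion $i_0:\qg1d\hookrightarrow\qpnd$ of Section \ref{sec:FixedLoci} induces isomorphisms on $H^2$ and $H_2$ carrying $D_j,D_b$ to $D_j,D_b$ and $\gamma_j,\gamma_t$ to $\gamma_j,\gamma_t$, so it is enough to compare intersection numbers on the two spaces. For a curve $\gamma\subset i_0(\qg1d)$, adjunction on both sides gives
$$K_{\qpnd}\cdot\gamma=K_{\qg1d}\cdot\gamma-\deg\det\bigl(N_{\qg1d/\qpnd}\big|_{\gamma}\bigr),$$
using that $\qg1d$ is a smooth (connected component of a) fixed locus, so $N_{\gamma/\qpnd}$ is an extension of $N_{\qg1d/\qpnd}|_\gamma$ by $N_{\gamma/\qg1d}$. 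Since $N_{\qg1d/\qpnd}=\operatorname{Hom}(S,\OO^{\,n-1})$ is the pushforward of $(S^*)^{\oplus(n-1)}$ along the universal curve, its determinant degree over $\gamma_j$ and over $\gamma_t$ is computed by GRR on $C\times C$ (with $S^*=\OO(\operatorname{pr}_1^*\tilde D+\Delta)$ and $\Delta^2=0$) and on the rational elliptic surface $Y$ (with $S^*=\OO(d\cdot[a])$), respectively. These produce exactly the $(n-1)$-linear corrections appearing in the formula, and substituting back into the $2\times 2$ inversion yields $a=\frac{d-11+(d-1)(n-1)}{12}$ and $b=-n$.

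\textbf{The main obstacle.} The genuinely delicate part is the deformation-theoretic bookkeeping behind the pieces $N_{\gamma_j/\qg1d}$ and $N_{\gamma_t/\qg1d}$: one must correctly account for deformations of the $j$-invariant of the underlying curve, deformations of the light marked points \emph{including their collisions}, and the $S_d$- (and, for the boundary strata, $\operatorname{Aut}(\Gamma)$-) quotient structure of $\qg1d$, all on the coarse scheme. Handling the colliding-point directions and the node-smoothing directions of $\Mdepsilon2$ together — and, for $\gamma_t$, doing the GRR computation on $Y$ in the presence of its nodal fibers while checking the vanishing of the relevant $R^1$ — is where the care is required; the $n\to n+1$ correction term $N_{\qg1d/\qpnd}=\operatorname{Hom}(S,\OO^{\,n-1})$, by contrast, contributes only the two clean GRR computations above.
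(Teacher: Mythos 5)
Your plan is correct and, for the passage from $n=1$ to general $n$, it is essentially the paper's own argument: the paper uses the inclusion $i_0:\qg1d\hookrightarrow\qpnd$, the exact sequence $0\to T_{\qg1d}\to i_0^*T_{\qpnd}\to N_{\qg1d/\qpnd}\to 0$, identifies $N_{\qg1d/\qpnd}$ as $\pi_*\OO(\sum\sigma_i)^{\oplus(n-1)}$ over a family, and applies Grothendieck--Riemann--Roch, exactly as you sketch. The small difference is in how the base case $n=1$ and the linear algebra are organized. Your primary plan is to intersect $K$ against $\gamma_j$ and $\gamma_t$ and invert the (nondegenerate) $2\times 2$ pairing; this works, but it asks you to recompute the deformation theory of the families over $C\times C$ and over the rational elliptic surface from scratch. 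The paper instead writes $K_{\qg1d}=\alpha D_j+\beta D_b$ and determines $\alpha,\beta$ by pulling back along the degree-$d!$ quotient $\varphi:\Mdepsilon2\to\qg1d$, applying Riemann--Hurwitz with ramification divisor $\sum_{i<j}\Delta_{ij}$, and quoting Hassett's closed formula $K_{\Mdepsilon2}=13\lambda-2\Delta_{nodal}+\psi_{tot}$ — which is precisely the ``alternative'' you mention in parentheses, and is cleaner because it offloads all the delicate Hassett-space deformation theory (the colliding light points, the node smoothing) to a citation. You correctly flag that bookkeeping as the delicate point; the paper's route sidesteps it. One stylistic remark: the paper carries out the GRR computation for $c_1(N_{\qg1d/\qpnd})$ over an arbitrary family $B\subset\qg1d$ and then reads off the class $\frac{(n-1)(1-d)}{12}D_j+(n-1)D_b$ directly, rather than specializing to $\gamma_j$ and $\gamma_t$ separately; since the Picard rank is 2 these are equivalent, and your numerical version would yield the same answer.
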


\begin{proof}
We begin with the $n=1$ case.

The map $\varphi: \Mdepsilon2 \stackrel{/S_d}{\rightarrow} \qg1d$  is ramified exactly along the divisor $\sum_{i<j} \Delta_{i j}$.  We can use the Riemann-Hurwitz formula to deduce $K_{\qg1d}$ from the canonical divisor of $\Mdepsilon2$, $$K_{\Mdepsilon2} = 13\lambda - 2 \Delta_{nodal} + \psi_{tot},$$ which was computed by Hassett \cite{Hassett}.  Here $\Delta_{nodal}$ is the locus where the genus 1 curve is nodal, $\psi_{tot} = \sum_i \psi_i$ and $\lambda$, $\psi_i$ are the standard Hodge and cotangent classes.

By Riemann-Hurwitz,
\begin{equation}
\varphi^* K_{\qg1d} = K_{\Mdepsilon2} - R\\
= 13 \lambda - 2 \Delta_{nodal} + \psi_{tot} - \sum_{i<j} \Delta_{i j}
\end{equation}

Writing $K_{\qg1d} = \alpha D_j + \beta D_b$, $\varphi^* K_{\qg1d} = 12 \alpha \lambda + \beta \sum_{i<j} \Delta_{i j}$ and

$$12 \alpha \lambda + \beta \sum_{i<j} \Delta_{i j} = (13 - 24 + d) \lambda - \sum_{i<j} \Delta_{i j}$$

so

\begin{equation} \label{Kqg1d}
K_{\qg1d} = \frac{d-11}{12}D_j - D_b.
\end{equation}

For $n>1$, we use the inclusion $i_0: \qg1d \hookrightarrow \qpnd$, which gives the short exact sequence

\begin{equation} \label{Kqg1d and Kqpnd ses}
0 \rightarrow T_{\qg1d} \rightarrow i_0^* T_{\qpnd} \rightarrow \N_{\qg1d/\qpnd} \rightarrow 0.
\end{equation}

Take a family
$
\begin{CD}
@. \CC  @.  \\
\pi @VVV @AAA @. \sigma_1 + ... + \sigma_d \\
@. B @.\\
\end{CD}
$

over a curve $B \subset i_0(\qg1d)$.  Then the normal bundle is $$\N_{\qg1d/\qpnd}|_B = \pi_*\left(\OO(\sum \sigma_i)^{n-1}\right).$$  We can use Grothendieck-Riemann-Roch to compute the first chern class of this bundle.

\begin{eqnarray*}
& & ch\left(\pi_* \OO(\sum \sigma_i)^{n-1}\right)  =  \pi_*\left( ch \left( \OO(\sum \sigma_i)^{n-1}\right) td(T_{\CC/B}) \right)\\
& = & \pi_*\left[ \left( (n-1) + (n-1) (\sum \sigma_i) + \frac{(n-1)^2 - 2\left(\stackrel{n-1}{2}\right)}{2} (\sum \sigma_i)^2 \right) \left( 1 - \frac{1}{2} \gamma + \frac{1}{12} (\gamma^2 + \eta) + ... \right) \right]\\
& = & \pi_* \left[ (n-1)\left(1 + (\sum \sigma_i) + \frac{1}{2} (\sum \sigma_i)^2 \right) \left(1-\frac{1}{2} \gamma + \frac{1}{12} (\gamma^2 + \eta) + ... \right)  \right]
\end{eqnarray*}

Taking the degree 1 part of each side,

\begin{eqnarray*}
c_1 \left(\pi_* \OO(\sum \sigma_i)^{n-1}\right) = \pi_* \left[ (n-1) \left( \frac{1}{12} (\gamma^2 + \eta) - \frac{1}{2} \gamma \cdot (\sum \sigma_i) + \frac{1}{2} (\sum \sigma_i)^2 \right) \right] \\
= (n-1) \left( \lambda - \frac{\pi_*(\gamma \cdot \sum \sigma_i)}{2} + \frac{\pi_*(\sum \sigma_i)^2}{2} + \frac{\pi_*(\sum_{i \neq j} \sigma_i \cdot \sigma_j}{2} \right)
\end{eqnarray*}

$\OO(\sigma_i^2)$ is the normal bundle to $\sigma_i$, so $$\pi_*(\sigma_i^2) = - \psi_i.$$  Meanwhile $$\gamma \cdot \sigma_i = c_1(\omega_{\CC/B} |_{\sigma_i}) = \psi_i.$$

So
\begin{eqnarray*}
c_1 \left( \pi^* \N_{\qg1d/\qpnd} \right)
= (n-1) \left( \lambda - \psi_{tot} + \sum_{i<j} \Delta_{i j} \right).
\end{eqnarray*}

Writing $$c_1 \left( \N_{\qg1d/\qpnd} \right) = \alpha D_j + \beta D_b,$$

$$12 \alpha \lambda + \beta \sum_{i<j} \Delta_{i j} = (n-1) ((1-d)\lambda + \sum_{i<j} \Delta_{i j})$$

and we conclude

$$c_1 \left( N_{\qg1d/\qpnd} \right) = \frac{(n-1)(1-d)}{12} D_j + (n-1) D_b.$$

Taking the first chern class of each term of (\ref{Kqg1d and Kqpnd ses}) and using the expression \ref{Kqg1d} for $K_{\qg1d}$, we conclude

\begin{eqnarray*}
c_1(i_0^* T_{\qg1d}) & = & c_1(T_{\qg1d}) + c_1(\N_{\qg1d/\qpnd}) \\
& = & - \left( (d-11) \frac{D_j}{12} - D_b \right) + (n-1) \left( (1-d) \frac{D_j}{12} + D_b \right)
\end{eqnarray*}

Hence

\begin{equation*}
i_0^* K_{\qg1d})  =  \frac{\left( d-11 + (d-1)(n-1) \right)}{12} D_j - n D_b.
\end{equation*}

Because $i_0^*$ is an isomorphism of the Picard groups of $\qg1d$ and $\qpnd$, we conclude that for all $n \geq 1$,
$$K_{\qpnd})  =  \frac{\left( d-11 + (d-1)(n-1) \right)}{12} D_j - n D_b,$$
as desired.
\end{proof}

\begin{prop}
$\qpnd$ is Fano iff $n(d+2)(d-1) < 20$.
\end{prop}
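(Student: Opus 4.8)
The plan is to read off the statement from three results already established: the canonical divisor formula (Theorem \ref{Canonical}), the description of the nef cone (Theorem \ref{NefCone}), and the linear relation $\Dfd = d(d-1)\Dj + 24\Db$ (Lemma \ref{divisor relation}).

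First I would rewrite the anticanonical class in the basis $\{\Dj,\Dfd\}$. From Lemma \ref{divisor relation}, $\Db = \tfrac{1}{24}\big(\Dfd - d(d-1)\Dj\big)$; substituting into
\[
-K_{\qpnd} \;=\; -\,\frac{d-11+(d-1)(n-1)}{12}\,\Dj \;+\; n\,\Db
\]
and collecting terms yields $-K_{\qpnd} = A\,\Dj + B\,\Dfd$ with $B = \tfrac{n}{24}$ and, after a short simplification of the numerator, $A = \tfrac{1}{24}\big(20 - n(d+2)(d-1)\big)$. Concretely, clearing denominators the $\Dj$-coefficient is $\tfrac{1}{24}\big(-2(d-11) - 2(d-1)(n-1) - nd(d-1)\big)$, and since $-2(d-1)(n-1) - nd(d-1) = -(d-1)\big(n(d+2)-2\big)$, the numerator collapses to $20 - n(d+2)(d-1)$.

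Second, by Theorem \ref{NefCone} the nef cone of $\qpnd$ is the closed two-dimensional cone spanned by the extremal rays $\Dj$ and $\Dfd$; since these two classes generate $\mathrm{Pic}(\qpnd)\otimes\Q$ (see Section \ref{sec:ConstructionOfDivisors}) they are linearly independent, and as the Picard rank is $2$ (Theorem \ref{PicardRank}) the ample cone is precisely the open cone $\{a\Dj + b\Dfd : a>0,\ b>0\}$. Hence $\qpnd$ is Fano if and only if $-K_{\qpnd}$ lies in this open cone, i.e.\ $A>0$ and $B>0$. Since $B = \tfrac{n}{24} > 0$ always, this is equivalent to $A > 0$, that is, to $n(d+2)(d-1) < 20$, which is the assertion.

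I do not expect a serious obstacle; the only points requiring care are the elementary base-change computation for $-K_{\qpnd}$ and the identification ``ample cone $=$ interior of the nef cone $=$ positive span of the two extremal generators'', which rests on Theorems \ref{PicardRank} and \ref{NefCone}. As consistency checks, the boundary value $n(d+2)(d-1) = 20$ (e.g.\ $n=5$, $d=2$) gives $A=0$, so that $-K_{\qpnd} = \tfrac{n}{24}\Dfd$ is nef but not ample and is correctly excluded by the strict inequality, while $d=1$ gives $A = \tfrac{5}{6} > 0$, consistent with $\overline{Q}_1(\pnmo,1)\simeq \p^1\times\pnmo$ being Fano.
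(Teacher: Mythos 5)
Your proposal is correct and takes essentially the same route as the paper: both rewrite $-K_{\qpnd}$ in the $\{\Dj,\Dfd\}$ basis using Lemma \ref{divisor relation} (arriving at the identical coefficient $\frac{1}{24}(20 - n(d+2)(d-1))$ on $\Dj$) and then read off ampleness from Theorem \ref{NefCone}. Your version simply spells out the algebraic collapse of the numerator and the identification of the ample cone with the interior of the nef cone, which the paper leaves implicit.
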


\begin{proof}
Recall that a scheme $X$ is Fano if $-K_X$ ample.  We use Lemma \ref{divisor relation} to write $-K_{\qpnd}$ in terms of the divisors bounding the nef cone.  We find

$$-K_{\qpnd} = \frac{1}{24} \left( n D_{fd} - ( n(d+2)(d-1) - 20 ) D_j \right).$$
\end{proof}

\section{Rational connectedness} \label{sec:ratconn}
Recall that a scheme $X$ is \textit{rationally connected} if for any two general points $x_1, x_2 \in X$, there is a chain of rational curves $R_1, ..., R_k$ such that $x_1 \in R_1$ and $x_2 \in R_k$.

\begin{thm} \label{thm:ratconn}
$\qpnd$ is rationally connected.
\end{thm}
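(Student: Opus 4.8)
The plan is to prove rational connectedness by exhibiting enough rational curves, building on the divisor theory already developed. The cleanest route exploits the projection $\pi: \qpnd \to \Moobar$ together with the fact that $\Moobar \simeq \p^1$ is itself rational, so it suffices to rationally connect the fibers and then chain across. First I would recall from the discussion at the end of the introduction that the fiber of $\pi$ over any \emph{smooth} genus $1$ curve $C$ is isomorphic to $\p^{nd-1}/G$ for a finite group $G$ (the quotient of a projective space by the automorphisms acting on the relevant linear system), and a finite quotient of projective space is rationally connected — indeed unirational — since $\p^{nd-1}$ dominates it. Thus every fiber of $\pi$ over the dense open $\Moo \subset \Moobar$ is rationally connected.

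Next I would upgrade "every fiber over a dense open is rationally connected, and the base is rational" to "the total space is rationally connected." For this, given two general points $x_1, x_2 \in \qpnd$, by genericity they lie over points of $\Moo$, hence in rationally connected fibers. It remains to connect a general point of one fiber to a general point of another fiber by a chain of rational curves lying in $\qpnd$. Here the curve $\gamma_j$ constructed in Section \ref{sec:Curves} is the key tool: for a fixed smooth $C$ and a general degree $d-1$ divisor $\tilde D$, the curve $\gamma_j(C, \tilde D) \simeq C$ is \emph{not} rational, so $\gamma_j$ alone does not suffice. Instead I would move in the \emph{base} direction using rational curves in $\qpnd$ that dominate $\Moobar \simeq \p^1$: for instance, take a pencil of plane cubics $\Pen$ with a base point $a$ and consider the family $(\Pen_t, d\cdot a)$, which is exactly the curve $\gamma_t$ of Section \ref{sec:Curves}; it is a rational curve (parametrized by $\p^1$) and its image under $\pi$ is all of $\Moobar$. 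More generally, through a general point of $\qpnd$ one can find such a rational multisection: fix a general $(C, D)$, embed $C$ in some $\pnmo$-target realization, and deform $C$ in a pencil of genus $1$ curves while dragging the $d$ points along sections, producing a rational curve through $(C,D)$ surjecting onto $\Moobar$.

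The argument then assembles as follows. Given general $x_1, x_2$, let $R^{(1)}, R^{(2)}$ be rational curves through $x_1, x_2$ respectively, each dominating $\Moobar$. Pick a common smooth elliptic curve $C_0$ in the image of both, so $R^{(1)}$ meets the fiber $F_{C_0} = \pi^{-1}(C_0)$ in a point $y_1$ and $R^{(2)}$ meets it in a point $y_2$. Since $F_{C_0} \simeq \p^{nd-1}/G$ is rationally connected, $y_1$ and $y_2$ are joined by a chain of rational curves inside $F_{C_0}$, hence inside $\qpnd$. Concatenating $R^{(1)}$, this chain, and $R^{(2)}$ gives a chain of rational curves from $x_1$ to $x_2$. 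I would handle the edge cases $d=1$ (where $\qpnd \simeq \pnmo \times \Moobar$ is visibly rational, hence rationally connected) separately at the outset.

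The main obstacle I anticipate is the \emph{second} step: producing, through a general point of $\qpnd$, a genuinely rational curve that dominates $\Moobar$. One must be careful that the natural families (pencils of plane cubics with marked base points, as in the definition of $\gamma_t$ and $\Dfd$) actually pass through a \emph{general} point of the moduli space and not merely through special loci; this requires a dimension count showing the relevant families of pencils-with-marked-points sweep out a dense subset of $\qpnd$. One also needs the image of such a family in $\Moobar$ to be all of $\p^1$ rather than a point, which holds precisely because a nonconstant pencil of cubics has varying $j$-invariant. Once a dominating rational multisection through a general point is in hand, the rest is the standard "rationally connected fibration over a rational base is rationally connected" packaging, specialized to our very concrete situation where the fibers are explicit finite quotients of projective space.
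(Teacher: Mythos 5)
Your strategy — rationally connected fibers over the rational base $\Moobar$ — matches the paper's in spirit, but your "connect between fibers" step has a genuine gap, one you identify yourself: you need a rational curve through a \emph{general} point of $\qpnd$ that dominates $\Moobar$, and the natural candidate $\gamma_t$ lives entirely in the special locus $\{(C, d\cdot a)\}$, not through a general $(C,D)$. The dimension count you gesture at (sweeping out $\qpnd$ by pencils of plane cubics with $d$ marked base points) is nontrivial and is not how the paper proceeds.

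The paper sidesteps the issue entirely by reversing the order of operations. Since each fiber over a smooth elliptic curve is rationally connected (for $n=1$ it is $\p^{d-1}/\hat{H}$, by Proposition \ref{Ud is fiber bundle}), you do not need a dominating rational curve through a general point: instead, first move \emph{within the fiber} from the general point $(C,D)$ to the special point $(C, dp)$ by a rational curve $R_1$, then use the single explicit $\Moobar$-section $\{(C, dp)\} \simeq \p^1 \subset \qg1d$ as the curve $R_2$ traveling between fibers, and finally move back within the target fiber to $(C', D')$ by $R_3$. Thus one fixed dominating rational curve suffices, rather than a moving family of multisections. For $n > 1$ the paper adds one more reduction you don't have: the rational curve $R_1 = \overline{\{(C, [s_0, \epsilon s_1, \ldots, \epsilon s_{n-1}]) : \epsilon \in \C\}}$ connects a general stable quotient to the point $(C, |s_0|) \in i_0(\qg1d)$ within the same fiber, reducing the $n>1$ case to the $n=1$ chain rather than directly invoking the structure of the $\p^{nd-1}/G$ fibers. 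You should adopt the "move to $(C,dp)$ first, then ride the $\Moobar$-section" trick; it closes your gap and replaces the hardest part of your argument with something elementary.
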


\begin{proof}
First, consider the case of $\qg1d$.  As we need show only that two general points are connected by a chain of rational curves, it suffices to fix two general points $(C,D), (C',D')$ in $\widetilde{U}_d$.  By Proposition \ref{Ud is fiber bundle}, $\widetilde{U}_d \rightarrow \tilMoo$ is a fiber bundle with fibers $\mathbb{P}^{d-1}/\hat{H}$.  So the fibers are rationally connected.  (To construct a line in $\pdmo/\hat{H}$ between any two points $p_1$ and $p_2$, take a cover $\pi: \pdmo \rightarrow \pdmo/\hat{H}$, and two preimages $q_1$, $q_2$ of $p_1$, $p_2$.  There is a line $L \subset \pdmo$ between $q_1$ and $q_2$.  The quotient $\pi(L)$ is rational because any quotient of $\p^1$ by a finite group is isomorphic to $\p^1$, and so $\pi(L)$ is a rational curve in $\pdmo/\hat{H}$ connecting $p_1$ and $p_2$.)

Let $R_1$ be a rational curve between $(C,D)$ and $(C, dp)$, where $dp$ is the divisor $d$ times a point $p$.  Let $R_2$ be a rational curve between $(C, dp)$ and $(C', dp')$ (a copy of $\Moobar$ in $\qg1d$).  And let $R_3$ be a rational curve between $(C',dp')$ and $(C', D')$.  This chain of rational curves connects the two general points we started with.

In the general case $n>1$, again fix two general stable quotients $(C, [s_0,...,s_{n-1}])$ and $(C', [s'_0,...,s'_{n-1}])$ whose underlying curve is a smooth genus 1 curve other than $\xi_4$ or $\xi_6$.  Let $R_1$ be the closure of $(C, [s_0, \epsilon s_1, ..., \epsilon s_{n-1}])$, $\epsilon \in \C$, and $R_5$ the closure of $(C', [s'_0, \epsilon s_1, ..., \epsilon s_{n-1}])$, $\epsilon \in \C$.  By genericity, $s_0$ and $s'_0$ are both nonzero.

Recall the inclusion $i_0:\qg1d \rightarrow \qpnd$ (Equation \ref{qg1d in qgnd}).  $R_1$ and $R_5$ both intersect the image $i_0(\qg1d)$, namely at $(C, |s_0|)$ and $(C', |s'_0|)$, where $|s|$ denotes the zero set of the section.  As discussed above, there is a chain of rational curves $R_2, R_3, R_4$ in $i_0(\qg1d)$ connecting those two points.  Taken together, $R_1, ... ,R_5$ is then a chain of rational curves connecting the original two general points, as desired.
\end{proof}

\vspace{+10pt}
\noindent Yaim Cooper \\
Department of Mathematics \\
Princeton University \\
Princeton NJ, 08540 \\
{\tt yaim@math.princeton.edu} \\

\end{document}